\numberwithin{equation}{section}
\theoremstyle{plain}
\newtheorem{theorem}[equation]{Theorem}
\newtheorem{proposition}[equation]{Proposition}
\newtheorem{lemma}[equation]{Lemma}
\newtheorem{corollary}[equation]{Corollary}
\theoremstyle{definition}
\newtheorem{definition}[equation]{Definition}
\theoremstyle{remark}
\newtheorem{remark}[equation]{Remark}
\renewcommand{\phi}{\varphi}
\renewcommand{\epsilon}{\varepsilon}
\newcommand{\N}{\mathbb N}
\newcommand{\Z}{\mathbb Z}
\newcommand{\csa}{$C^*$\nb-al\-ge\-bra}
\newcommand{\CK}{Cuntz--Krie\-ger algebra}
\renewcommand{\subset}{\subseteq} 
\DeclareMathOperator{\id}{id}
\newcommand*{\KK}{\textup{KK}}
\newcommand*{\K}{\textup{K}}
\DeclareMathOperator{\Sus}{\Sigma}
\newcommand{\I}[1]{\mathbb I_{#1}}
\DeclareMathOperator{\im}{im}
\DeclareMathOperator{\coker}{coker}
\newcommand{\Op}{\mathbb O}
\newcommand{\LC}{\mathbb{LC}}
\DeclareMathOperator{\FK}{FK}
\newcommand{\NT}{\mathcal{NT}}
\newcommand{\Mod}{\mathfrak{Mod}}
\newcommand{\onto}{\twoheadrightarrow}
\newcommand{\into}{\hookrightarrow}
\newcommand{\ob}{\widetilde\partial{}}
\newcommand{\rrzero}{real-rank-zero-like}
\DeclareMathOperator{\Nat}{Nat}
\newcommand{\Ab}{\mathfrak{Ab}}
\newcommand{\kk}{\mathfrak{KK}}
\DeclareMathOperator{\rank}{rank}
\newcommand{\unit}{\textnormal{unit}}
\newcommand{\pt}{\textnormal{pt}}
\newcommand{\Catgunnar}{\mathcal{R}}
\newcommand{\Cattakeshi}{\mathcal{B}}
\newcommand{\Catgunnartakeshi}{{\Cattakeshi\Catgunnar}}
\newcommand{\Catsixterm}{\mathcal{ST}}
\newcommand{\ot}{\leftarrow}
\newcommand{\Basis}{\mathbb{B}}
\newcommand{\Sheaf}{\mathfrak{Sh}}
\newcommand{\CoSheaf}{\mathfrak{CoSh}}
\DeclareMathOperator{\Path}{Path}
\DeclareMathOperator{\DoublePaths}{DP}
\newcommand{\FKtakeshi}{\FK_\Cattakeshi}
\newcommand{\FKgunnar}{\FK_\Catgunnar}
\newcommand{\Fgunnar}{\mathfrak{F}_\Catgunnar}
\newcommand{\Ftakeshi}{\mathfrak{F}_\Cattakeshi}
\newcommand{\Fsixterm}{\mathfrak{F}_\Catsixterm}
\newcommand{\Fgunnartakeshi}{\mathfrak{F}_\Catgunnartakeshi}
\newcommand{\FKgunnartakeshi}{\FK_\Catgunnartakeshi}
\newcommand{\FGTgunnar}{\mathfrak{F}_{\Catgunnartakeshi,\Catgunnar}}
\newcommand{\FKsixterm}{\FK_\Catsixterm}
\newcommand{\obd}[1]{\widetilde{\partial}\{{#1}\}}
\newcommand{\osi}[1]{\widetilde{\{{#1}\}}}
\newcommand{\csi}[1]{\overline{\{{#1}\}}}
\newcommand{\si}[1]{\{{#1}\}}
\newcommand{\gsi}[1]{{#1}_{1}}
\newcommand{\gobd}[1]{\widetilde\partial{#1}_{0}}
\newcommand{\gosi}[1]{\widetilde{{#1}}_0}
\newcommand{\tcsi}[1]{\overline{{#1}}_1}
\newcommand*{\Star}{\texorpdfstring{$^*$\nobreakdash-}{*-}}
\newcommand{\shom}{$^*$\nobreakdash-homo\-mor\-phism}
\newcommand*{\nb}{\nobreakdash}
\title[Reduction of filtered $\K$\nb-the\-ory]{Reduction of filtered $\K$\nb-the\-ory and\\ a characterization of Cuntz--Krie\-ger algebras}
\author{Sara E. Arklint}
\address{Department of Mathematical Sciences, University of Copenhagen, Uni\-versi\-tets\-park\-en~5, DK-2100 Copenhagen, Denmark}
\email{arklint@math.ku.dk}
\author{Rasmus Bentmann}
\address{Department of Mathematical Sciences, University of Copenhagen, Uni\-versi\-tets\-park\-en~5, DK-2100 Copenhagen, Denmark}
\email{bentmann@math.ku.dk}
\author{Takeshi Katsura}
\address{Department of Mathematics, Keio University, 3-14-1 Hiyoshi, Kouhoku-ku, Yokohama 223-8522, Japan}
\email{katsura@math.keio.ac.jp }
\thanks{This research was supported by the Danish National Research Foundation through the Centre for Symmetry and Deformation (DNRF92). The third-named author was partially supported by the Japan Society for the Promotion of Science.}
\keywords{\csa s, graph \csa s, classification, filtered $\K$\nb-the\-ory, real rank zero}
\subjclass[2010]{46L35, 46L80, (46L55)}
\begin{document}
\bibliographystyle{alpha}

\begin{abstract}
We show that filtered $\K$\nb-the\-ory is equivalent to a substantially smaller invariant for all real-rank-zero \csa s with certain primitive ideal spaces -- including the infinitely many so-called accordion spaces for which filtered $\K$\nb-the\-ory is known to be a complete invariant. As a consequence, we give a characterization of purely infinite Cuntz--Krie\-ger algebras whose primitive ideal space is an accordion space.
\end{abstract}

\maketitle

\section{Introduction}

The Cuntz and \CK s are historically and in general of great importance for our understanding of simple and non-simple purely infinite \csa s as they were not only the first constructed examples of such but are also very tangible due to the combinatorial nature of their construction~\cite{cuntz_krieger}.
\CK s arise from shifts of finite type and it has been shown that they are exactly the graph \csa{}s $C^*(E)$ arising from finite directed graphs $E$ with no sources~\cite{arklint_ruiz}.
Using the Kirchberg--Phillips classification theorem~\cites{Kirchberg:Fields_Monograph,Phillips:Classification}, the Cuntz algebras and simple \CK s can be identified, up to isomorphism, as the unital UCT Kirchberg algebras with a specific type of $\K$\nb-the\-ory \cites{Cuntz:O_n,Rordam:Class_of_CK_algs}. A~similar characterization for non-simple, purely infinite \CK s and, more generally, of unital graph \csa{}s of this type is desirable.

A Kirchberg $X$\nb-al\-ge\-bra is a purely infinite, nuclear, separable \csa{} with primitive ideal space homeomorphic to~$X$ (in a specified way).
When $X$ is a so-called {accordion space}, see~Definition~\ref{def:accordion}, the invariant \emph{filtered $\K$\nb-the\-ory} $\FK$ is a strongly complete invariant for stable Kirchberg $X$\nb-al\-ge\-bras with simple subquotients in the bootstrap class \cites{kirchberg,MN:Filtrated,bentmann_koehler}.  In particular, filtered $\K$\nb-the\-ory is complete for purely infinite graph \csa{}s with primitive ideal space of accordion type, and the main goal of this paper is to use this to achieve a \emph{characterization} in the sense of the previous paragraph of such purely infinite \CK s and graph \csa{}s. Since a \CK{} is purely infinite if and only if it has real rank zero (and more generally, a purely infinite graph \csa{} always has real rank zero~\cite{hongszymanski}), we will specifically investigate filtered $\K$\nb-the\-ory for \csa s of real rank zero.

In the companion paper~\cite{range_result}, we determine the range of \emph{reduced} filtered $\K$\nb-the\-ory with respect to purely infinite \CK s and graph \csa{}s. This invariant was originally defined by Gunnar Restorff~\cite{restorff}, who used it to give an ``internal'' classification of purely infinite \CK s, inspired by work of Mikael R\o rdam~\cite{rordam} and work of Mike Boyle and Danrun Huang on dynamical systems~\cite{boyle_huang}. 
In the present note, we show that under some assumptions on the primitive ideal space---which are satisfied for accordion spaces---the invariants filtered $\K$\nb-the\-ory and reduced filtered $\K$\nb-the\-ory are in a certain sense equivalent when restricted to purely infinite graph \csa{}s.

To be more precise, we show that isomorphisms on the reduced filtered $\K$\nb-the\-ory of purely infinite graph \csa{}s over so-called EBP spaces lift to isomorphisms on \emph{concrete} filtered $\K$\nb-the\-ory -- this invariant may be considered as a more explicit model of filtered $\K$\nb-the\-ory: the two are known to coincide for many spaces but not in general (compare Remark~\ref{rem:Q}).
Along the way, we introduce filtered $\K$\nb-the\-ory \emph{restricted to the canonical base}, denoted~$\FKtakeshi$, and show that, for real-rank-zero \csa s over an EBP space, isomorphisms on~$\FKtakeshi$ lift to isomorphisms on concrete filtered $\K$\nb-the\-ory.

For accordion spaces, our results furnish one-to-one correspondences, induced by the different variants of filtered $\K$\nb-the\-ory, between purely infinite graph \csa{}s respectively unital purely infinite graph \csa{}s or purely infinite \CK s on the one hand, and certain types of modules in the respective target categories on the other hand.
In particular, we obtain the desired characterization of purely infinite \CK s with accordion spaces as primitive ideal spaces:
\begin{theorem} \label{thm:nophantoms}
Let $A$ be a \csa{} whose primitive ideal space is an accordion space.
Then $A$ is a purely infinite \CK{} if and only if $A$ satisfies the following:
\begin{itemize}
\item $A$ is unital, purely infinite, nuclear, separable, and of real rank zero,
\item for all ideals $I$ and $J$ of $A$ with $I\subseteq J$ and $J/I$ simple, the quotient~$J/I$ belongs to the bootstrap class, the group~$\K_*(J/I)$ is finitely generated, the group~$\K_1(J/I)$ is free and $\rank\K_1(J/I)=\rank\K_0(J/I)$.
\end{itemize}
\end{theorem}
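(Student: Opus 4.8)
The plan is to prove the two implications separately. The ``only if'' direction is a compilation of standard facts about \CK s, whereas the ``if'' direction is where the reductions of filtered $\K$\nb-the\-ory developed here, the range computation of the companion paper~\cite{range_result}, and the $X$\nb-equi\-var\-i\-ant Kirchberg--Phillips classification over accordion spaces of~\cites{kirchberg,MN:Filtrated,bentmann_koehler} are combined.

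For the ``only if'' direction, assume $A$ is a purely infinite \CK. By~\cite{arklint_ruiz}, $A$ is (isomorphic to) the graph \csa{} $C^*(E)$ of a finite graph~$E$ with no sources, so $A$ is unital, nuclear and separable; it is purely infinite by assumption, hence of real rank zero by~\cite{hongszymanski}. Since real rank zero passes to hereditary subalgebras and to quotients, every subquotient of~$A$ has real rank zero, which forces $E$ to satisfy Condition~(K); consequently every ideal of~$A$ is gauge\nb-invariant, and each subquotient $J/I$ with $J/I$ simple is (stably isomorphic to) a graph \csa. Because pure infiniteness likewise passes to ideals and quotients, $J/I$ is simple and purely infinite, hence stably isomorphic to a simple \CK~$\mathcal{O}_M$. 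Graph \csa s lie in the bootstrap class, and $\K_0(\mathcal{O}_M)\cong\coker(\id-M^t)$, $\K_1(\mathcal{O}_M)\cong\ker(\id-M^t)$ are finitely generated with $\K_1(\mathcal{O}_M)$ free and $\rank\K_1(\mathcal{O}_M)=\rank\K_0(\mathcal{O}_M)$, since the kernel and cokernel of an endomorphism of a finitely generated free abelian group have equal rank. This establishes all the listed properties.

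For the ``if'' direction, assume $A$ has the listed properties and set $X:=\Prim A$, an accordion space. Being unital, purely infinite, nuclear and separable with primitive ideal space~$X$, the algebra~$A$ is a unital Kirchberg $X$\nb-al\-ge\-bra, and since its simple subquotients lie in the bootstrap class it is covered by the classification of~\cites{kirchberg,MN:Filtrated,bentmann_koehler}, under which stable Kirchberg $X$\nb-al\-ge\-bras of this kind are classified up to $X$\nb-equi\-var\-i\-ant stable isomorphism by filtered $\K$\nb-the\-ory. By the companion paper~\cite{range_result}, the hypotheses on the $\K$\nb-the\-ory of the simple subquotients of~$A$, together with real rank zero and the fact that accordion spaces are EBP, are exactly what is needed to realize the reduced filtered $\K$\nb-the\-ory of~$A$---together with the position of the class~$[1_A]$---as that of some unital purely infinite \CK~$B$ with $\Prim B\cong X$. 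Fix such a~$B$ together with an isomorphism $\alpha$ on reduced filtered $\K$\nb-the\-ory carrying $[1_A]$ to~$[1_B]$. Since $A$ has real rank zero, $B$ is a purely infinite graph \csa, and $X$ is an EBP space, the reduction results of the present paper---which relate reduced filtered $\K$\nb-the\-ory and $\FKtakeshi$ to concrete filtered $\K$\nb-the\-ory---upgrade $\alpha$ to an isomorphism on concrete, and hence on abstract, filtered $\K$\nb-the\-ory, and the upgrade can be arranged to respect the unit. The classification theorem now produces an $X$\nb-equi\-var\-i\-ant stable isomorphism $\tensor{A}{\BK}\cong\tensor{B}{\BK}$; compressing to the full corners determined by~$1_A$ and~$1_B$---the standard passage from the stable to the unital classification for Kirchberg algebras---upgrades it to an isomorphism $A\cong B$, and therefore $A$ is a purely infinite \CK.

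The step I expect to be the main obstacle is the bookkeeping in the last paragraph: one must verify that the a priori distinct decorations of filtered $\K$\nb-the\-ory---$\FKtakeshi$, reduced filtered $\K$\nb-the\-ory, and concrete (equivalently abstract) filtered $\K$\nb-the\-ory---can be matched along a single isomorphism compatible simultaneously with the filtered structure, with the real-rank-zero refinement used in the lifting step, and with the order\nb-unit datum, so that the range computation of~\cite{range_result}, the lifting theorems of the present paper, and the equivariant classification compose to a unital isomorphism rather than merely to a chain of abstract isomorphisms of invariants.
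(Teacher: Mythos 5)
Your proposal is correct and follows essentially the same route as the paper: the ``only if'' direction is the standard compilation of facts about purely infinite Cuntz--Krieger algebras (packaged in the paper via Proposition~\ref{prop:range}), and the ``if'' direction is exactly the paper's chain of Theorem~\ref{thm:range_unit} (realize $\FKgunnar^{\unit}(A)$ by a unital purely infinite Cuntz--Krieger algebra) followed by Corollary~\ref{cor:unit_gunnar} (lift the pointed isomorphism on reduced filtered $\K$\nb-the\-ory through $\FKtakeshi$ and $\FKsixterm$ to an $X$\nb-equi\-vari\-ant isomorphism via Theorem~\ref{bentmann} and the meta-theorem of~\cite{err_meta}), as summarized in List~3 of Corollary~\ref{cor:main}. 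The ``bookkeeping'' you flag as the main obstacle is precisely what the paper's pointed-module formalism and Lemma~\ref{lem:seq_unit} are designed to handle, so no genuine gap remains.
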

In the terms introduced by the first named author in~\cite{arklint}, our Theorem~\ref{thm:nophantoms} states that there is no \emph{phantom \CK{}} whose primitive ideal space is an accordion space.
It is an open question whether this holds for all finite primitive ideal spaces.

\subsection{Historical account}

By a seminal result of Eberhard Kirchberg, $\KK(X)$-equivalences between stable Kirchberg $X$\nb-al\-ge\-bras, that is, stable, tight, $\mathcal O_\infty$-ab\-sorb\-ing, nuclear, separable \csa s over a space $X$, lift to $X$\nb-equi\-vari\-ant \Star{}iso\-mor\-phisms. In~\cite{MN:Filtrated}, Ralf Meyer and Ryszard Nest established a Universal Coefficient Theorem computing the equivariant bivariant theory $\KK(X)$ from filtered $\K$\nb-the\-ory under the assumption that the topology of~$X$ is finite and totally ordered.
As a result, for such spaces~$X$, isomorphisms on filtered $\K$\nb-the\-ory between stable Kirchberg $X$\nb-al\-ge\-bras with simple subquotients in the bootstrap class lift to $X$\nb-equi\-vari\-ant \Star isomorphisms. This result was generalized in~\cites{bentmann_koehler} by the second-named author and Manuel K\"ohler to the case of so-called accordion spaces. 
Building on these results, S\o{}ren Eilers, Gunnar Restorff, and Efren Ruiz classified in~\cite{err} certain classes of real-rank-zero (not necessarily purely infinite) graph \csa{}s using \emph{ordered} filtered $\K$\nb-the\-ory.

On the other hand, Meyer--Nest and the second-named author have constructed counterexamples to the analogous classification statement over all six four-point non-accordion connected $T_0$\nb-spaces. More precisely, for each of these spaces~$X$, they exhibit two non-$\KK(X)$-equivalent Kirchberg $X$\nb-al\-ge\-bras with simple subquotients in the bootstrap class whose filtered $\K$\nb-the\-ory is isomorphic (see~\cites{MN:Filtrated,bentmann}).

Despite this obstruction, it had previously been shown by Gunnar Restorff in~\cite{restorff} that filtered $\K$\nb-the\-ory---in fact reduced filtered $\K$\nb-the\-ory---is a complete invariant for purely infinite Cuntz--Krie\-ger algebras. Any finite $T_0$-space, in particular the six problematic four-point spaces mentioned above, can be realized as the primitive ideal space of a purely infinite Cuntz--Krie\-ger algebra. Unfortunately, Restorff's result only gives an \emph{internal} classification of Cuntz--Krie\-ger algebras and admits no conclusion concerning when a given Cuntz--Krie\-ger algebra is stably isomorphic to a given purely infinite, nuclear, separable \csa{} with the same ideal structure and filtered $\K$\nb-the\-ory.

In~\cite{arr}, Gunnar Restorff, Efren Ruiz, and the first-named author noted that, for five of the six problematic four-point spaces, the constructed counterexamples to classification do \emph{not} have real rank zero. They went on to show that for four of these spaces $X$, filtered $\K$\nb-the\-ory is in fact a complete invariant for Kirchberg $X$\nb-al\-ge\-bras of real rank zero with simple subquotients in the bootstrap class. The four-point non-accordion space for which the constructed counterexample does have real rank zero will be denoted by~$\mathcal D$.

It is a general property of Cuntz--Krie\-ger algebras that the $\K_1$-group of every subquotient is free. The same is true, more generally, for graph \csa{}s. We observe that, for real-rank-zero \csa s over~$\mathcal D$ satisfying this $\K$\nb-the\-o\-retic condition, isomorphisms on the reduced filtered $\K$\nb-the\-ory lift to $\KK(\mathcal D)$-equivalences (see Proposition~\ref{prop:diamond}). There are therefore no known counterexamples to classification by filtered $\K$\nb-the\-ory of Kirchberg $X$\nb-al\-ge\-bras with simple subquotients in the bootstrap class that have the $\K$\nb-the\-ory of a real-rank-zero graph \csa{}.

\subsection{Organization of the paper}

After fixing some basic conventions and definitions in Section~\ref{sec:notation}, we introduce filtered $\K$\nb-the\-ory $\FK$ and \emph{concrete} filtered $\K$\nb-the\-ory $\FKsixterm$ in Section~\ref{sec:filtktheory}. Section~\ref{sec:sheaves} contains some basic definitions and facts concerning sheaves and cosheaves.

In Section~\ref{sec:takeshi_invariant}, filtered $\K$\nb-the\-ory restricted to the canonical base $\FKtakeshi$ is defined for spaces with the unique path property. We introduce EBP spaces and show that the concrete filtered $\K$\nb-the\-ory $\FKsixterm(A)$ of a real-rank-zero \csa{}~$A$ over an EBP space is completely determined by the filtered $\K$\nb-the\-ory restricted to the canonical base $\FKtakeshi(A)$, see Corollary~\ref{cor:sixterm_to_takeshi}.

In Section~\ref{gunnar}, reduced filtered $\K$\nb-the\-ory $\FKgunnar$ is defined, and it is shown in Section~\ref{sec:intermediate} that the concrete filtered $\K$\nb-the\-ory $\FKsixterm(A)$ of a real-rank-zero \csa{}~$A$ over an EBP space satisfying that all subquotients have free $\K_1$-groups can be recovered from the reduced filtered $\K$\nb-the\-ory $\FKgunnar(A)$, see~Corollary~\ref{cor:gunnartosixterm_cstar}. This is of particular interest because of the range results from~\cite{range_result} for (unital) reduced filtered $\K$\nb-the\-ory on (unital) purely infinite graph \csa{}s, see~Theorem~\ref{thm:range} (and~\ref{thm:range_unit}). In order to proceed from reduced to concrete filtered $\K$\nb-the\-ory in Section~\ref{sec:intermediate}, an ``intermediate'' invariant is introducted, which serves only technical purposes.

In Sections~\ref{sec:unital} and~\ref{sec:order}, unital filtered $\K$\nb-the\-ory and ordered filtered $\K$\nb-the\-ory are treated. The most complete results in our framework are possible for \csa{}s with primitive ideal spaces of accordion type; these are summarized in Section~\ref{sec:accordion}.
 
\subsection{Acknowledgements}
Most of this work was done while the third-named author stayed 
at the University of Copenhagen. 
He would like to thank the people in Copenhagen for their hospitality.
The authors are grateful to S{\o{}}ren Eilers for his encouragement and valuable comments.
We thank Mikael R{\o{}}rdam for helpful comments.
The second-named author thanks Ralf Meyer for the supervision of~\cite{bentmann}
which has influenced parts of this work.

\section{Notation} \label{sec:notation}
In this article, matrices act from the right and the composite of maps $A\xrightarrow{f} B\xrightarrow{g} C$ is denoted by~$fg$.
The category of abelian groups is denoted by $\Ab$, the category of $\Z/2$-graded abelian groups by $\Ab^{\mathbb Z/2}$.

Let $X$ be a finite $T_0$-space.
For a subset $Y$ of $X$, 
we let $\overline Y$ denote the closure of $Y$ in $X$, and 
let $\overline\partial Y$ denote the boundary 
$\overline Y \setminus Y$ of $Y$. 
Since $X$ is a finite space, 
there exists a smallest open subset $\widetilde Y$ of $X$ containing $Y$.
We let $\ob Y$ denote the set $\widetilde Y\setminus Y$.
For $x,y\in X$ we write $x\leq y$ 
when $\overline{\{x\}} \subset \overline{\{y\}}$, 
and $x < y$ when $x\leq y$ and $x \neq y$. 
We write $y\to x$ when $x < y$ and 
no $z \in X$ satisfies $x < z < y$.
A \emph{path} from $y$ to $x$ is a sequence $(z_k)_{k=1}^n$ such that $z_{k+1}\to z_{k}$ for $k=1, \ldots, n-1$ 
and $z_1=x$, $z_{n}=y$. We let $\Path(y,x)$ denote the set of paths from $y$ to $x$.

\begin{definition} \label{def:accordion}
An \emph{accordion space} is a $T_0$-space $X=\{x_1,\ldots,x_n\}$ 
such that for every $k=1,2,\ldots,n-1$  either $x_k \to x_{k+1}$
or $x_k \leftarrow x_{k+1}$ holds and such that $x_k \to x_l$
does not hold for any $k,l$ with $|k-l|\neq 1$.
\end{definition}

For instance, if $X$ is linear, that is, if $X=\{x_1,\ldots,x_n\}$ with
$x_n\to\cdots\to x_2\to x_1$, then $X$ is an accordion space.

\section{Filtered \texorpdfstring{$\K$}{K}-theory} \label{sec:filtktheory}
In this section filtered $\K$\nb-the\-ory and concrete filtered $\K$\nb-the\-ory are defined. Some properties of objects in their target categories are introduced.

A \emph{\csa{} $A$ over~$X$} is (equivalently given by) a \csa{} $A$ equipped with an infima- and suprema-preserving map $\Op(X)\to\I{}(A), U\mapsto A(U)$ mapping open subsets in $X$ to (closed, two-sided) ideals in~$A$ (in particular it holds that $A(\emptyset)=0$ and $A(X)=A$).
The \csa{} $A$ is called \emph{tight} over~$X$ if the map is a lattice-isomorphism.
A \shom{} $\phi\colon A\to B$ for \csa s $A$ and $B$ over~$X$ is called \emph{$X$\nb-equi\-vari\-ant} if $\phi\bigl(A(U)\bigr)\subseteq B(U)$ for all $U\in\Op(X)$.
Let $\LC(X)$ denote the set of locally closed subsets of $X$, that is, subsets of the form $U\setminus V$ with $U$ and $V$ open subsets of $X$ satisfying $V\subseteq U$.
For $Y\in\LC(X)$, and $U,V\in\Op(X)$ satisfying that $Y=U\setminus V$ and $U\supseteq V$, we define $A(Y)$ as the subquotient $A(Y)=A(U)/A(V)$, which up to natural isomorphism is independent of the choice of $U$ and $V$ (see \cite{MN:Bootstrap}*{Lemma 2.15}).

\begin{definition}
A tight, $\mathcal O_\infty$-absorbing, nuclear, separable \csa{} over~$X$ is called a \emph{Kirchberg $X$\nb-al\-ge\-bra}.
\end{definition}

Let $\kk(X)$ be the additive category 
whose objects are separable \csa s over~$X$ 
and whose set of morphisms from $A$ to $B$ is the Kasparov group 
$\KK_0(X;A,B)$ defined by Kirchberg 
(see \cite{MN:Bootstrap}*{Section 3} for details).
For a \csa{} $A$ over~$X$, 
a $\Z/2$-graded abelian group $\FK_Y^*(A)$ 
is defined as $\K_*\bigl(A(Y)\bigr)$ for all $Y\in\LC(X)$.  
Thus $\FK_Y^*$ is an additive funtor from $\kk(X)$ 
to the category $\Ab^{\mathbb Z/2}$ of $\Z/2$-graded abelian groups. 
Ralf Meyer and Ryszard Nest constructed in~\cite{MN:Filtrated} \csa s $R_Y$ over~$X$ satisfying that the functors $\FK_Y^*$ and $\KK_*(X;R_Y,-)$ are naturally isomorphic.

In their definition of filtered $\K$\nb-the\-ory $\FK^*$, Meyer--Nest consider the $\Z/2$-graded pre-additive category $\NT_*$ with objects $\LC(X)$ and morphisms
\[
\Nat_*(\FK_Y^*,\FK_Z^*)\cong\KK_*(X;R_Z,R_Y)
\]
between $Y$ and $Z$, where $\Nat_*(\FK_Y^*,\FK_Z^*)$ denotes the set of graded natural transformations from the functor $\FK_Y^*$ to the functor $\FK_Z^*$.
The target category of $\FK^*$ is the category $\Mod(\NT_*)^{\mathbb Z/2}$ of graded modules over $\NT_*$, that is, $\Z/2$-graded additive functors $\NT_*\to \Ab^{\mathbb Z/2}$. Hence $\FK^*(A)$ consists of the groups $\FK_Y^*(A)$ together with the natural transformations $\FK_Y^*(A)\to\FK_Z^*(A)$.

For reasons of notation we will often find it convenient to consider instead the pre-additive category $\NT$ with objects $\LC(X)\times\{0,1\}$ and morphisms between $(Y,j)$ and $(Z,k)$ given by natural transformations
\[
\Nat(\FK_Y^j\FK_Z^k)\cong\KK_0(X;\Sus^kR_Z,\Sus^jR_Y),
\]
 where $\FK_Y^j(A)$ denotes $\K_j\bigl(A(Y)\bigr)$ for $j=0,1$ and $\Sus$ denotes suspension (with $\Sus^0A=A$). Let $\Mod(\NT)$ denote the category of modules over $\NT$, that is, additive functors $\NT\to\Ab$.

Given a graded $\NT_*$-module $M$, we define an $\NT$-module $D(M)$ as follows: we set $D(M)(Y,i) = M(Y)_i$ for $(Y,i)\in\LC(X)\times\{0,1\}$; for a morphism $f\colon (Y,i)\to (Z,j)$ in $\NT$, we define $D(M)(f)\colon D(M)(Y,i)\to D(M)(Z,j)$ as the composite
\[
M(Y)_i\hookrightarrow M(Y)_*\xrightarrow{M(f)} M(Z)_{*}\twoheadrightarrow M(Z)_j.
\]
It is straightforward to check that this yields a functor $D\colon\Mod(\NT_*)^{\mathbb Z/2}\to\Mod(\NT)$. In fact, $D$ is an equivalence of categories---an inverse can be defined by a direct sum construction. Consequently, we define the functor $\FK\colon\kk(X)\to\Mod(\NT)$ as the composite $\FK=D\circ\FK^*$.

\begin{definition}
  \label{def:generators}
Let $Y\in\LC(X)$, $U\subset Y$ be open in $Y$, and set $C=Y\setminus U$. 
A pair $(U,C)$ obtained in this way is called a \emph{boundary pair}.
The natural transformations occuring in the six-term exact sequence in $\K$\nb-the\-ory for the distinguished subquotient inclusion associated to $U\subset Y$ are denoted by $i_U^Y$, $r_Y^C$ and $\delta_C^Y$:
\[ \xymatrix{
\FK_U \ar[rr]^-{i_U^Y} && \FK_Y\ar[dl]^-{r_Y^C} \\
& \FK_C\ar[ul]|\circ^-{\delta_C^U} &
} \]
\end{definition}
These elements $i_U^Y$, $r_Y^C$ and $\delta_C^Y$ correspond to the $\KK(X)$-classes of the \Star{}ho\-mo\-mor\-phisms $R_Y\onto R_U$, $R_C\into R_Y$, and the extention $R_C\into R_Y\onto R_U$,  see~\cite{MN:Filtrated}.
These elements of $\NT_*$ satisfy the following relations. 
\begin{proposition}
  \label{pro:relations}
In the category $\NT_*$, the following relations hold. 
\begin{enumerate}[label=\textup{(\arabic*)}]
\item\label{it:identities} For every $Y\in\LC(X)$, \[i_Y^Y=r_Y^Y=\id_Y.\]

\item\label{it:biprodcut} 
If $Y,Z\in\LC(X)$ are topologically disjoint, then $Y\cup Z\in\LC(X)$ and
\[
 r_{Y\cup Z}^Y i_Y^{Y\cup Z} + r_{Y\cup Z}^Z i_Z^{Y\cup Z} =\id_{Y\cup Z}.
\]

\item\label{it:ii} For $Y\in\LC(X)$ and open subsets $U\subset V\subset Y$, 
\[ i_U^V i_V^Y  = i_U^Y.\]

\item\label{it:rr} For $Y\in\LC(X)$ and closed subsets $C\subset D\subset Y$, 
\[ r_Y^D r_D^C = r_Y^C.\]

\item\label{it:ir} For $Y\in\LC(X)$, an open subset $U\subset Y$ 
and a closed subset $C\subset Y$,
\[
 i_U^Y r_Y^C = r_U^{U\cap C} i_{U\cap C}^C.
\]

\item\label{it:id}
For a boundary pair $(U,C)$ in $X$ 
and an open subset $C'\subset C$, 
$(U,C')$ is a boundary pair and we have
\[
 i_{C'}^C \delta_C^U = \delta_{C'}^U.
\]

\item\label{it:dr}
For a boundary pair $(U,C)$ in $X$ 
and a closed subset $U'\subset U$, 
$(U',C)$ is a boundary pair and we have 
\[
\delta_C^U r_U^{U'} = \delta_C^{U'}.
\]

\item\label{it:rdi}
For $Y,Z,W \in \LC(X)$ such that 
$Y \cup W \in \LC(X)$ containing $Y,W$ as closed subsets, 
$Z \cup W \in \LC(X)$ containing $Z,W$ as open subsets, 
and $W \subset Y\cup Z$, 
we have 
\[
\delta_Y^{W\setminus Y} i_{W\setminus Y}^{Z} 
= r_Y^{W\setminus Z} \delta_{W\setminus Z}^{Z}.
\]
\end{enumerate}
\end{proposition}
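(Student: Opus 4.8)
The plan is to verify each of the eight relations by tracing through the defining correspondences between the natural transformations $i_U^Y$, $r_Y^C$, $\delta_C^Y$ and the $\KK(X)$-classes of the structural $\Star$homomorphisms and extensions relating the objects $R_Y$. Recall from Meyer--Nest that $\Nat_*(\FK_Y^*,\FK_Z^*)\cong\KK_*(X;R_Z,R_Y)$, that $i_U^Y$, $r_Y^C$, $\delta_C^Y$ correspond respectively to the quotient map $R_Y\onto R_U$, the inclusion $R_C\into R_Y$, and the boundary class of the extension $R_C\into R_Y\onto R_U$, and that composition in $\NT_*$ corresponds (contravariantly) to Kasparov product. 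So each relation becomes an identity in $\KK_*(X;-,-)$, and the strategy throughout is to reduce it to a statement about actual $\Star$homomorphisms or to the elementary functoriality of the six-term exact sequence in $\K$\nb-the\-ory.

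The first five relations are the most elementary. For \ref{it:identities}, the classes of the identity $\Star$homomorphisms $R_Y\to R_Y$ both give $\id_Y$. For \ref{it:ii} and \ref{it:rr}, the point is simply that the composites of quotient maps $R_Y\onto R_V\onto R_U$ (for $U\subset V\subset Y$ open) and of inclusions $R_C\into R_D\into R_Y$ (for $C\subset D\subset Y$ closed) are again the corresponding quotient map respectively inclusion; this is immediate from the way the $R_Y$ are built, or alternatively one invokes the naturality of the six-term sequence with respect to morphisms of extensions. Relation \ref{it:biprodcut} expresses that for topologically disjoint $Y,Z$ the object $R_{Y\cup Z}$ decomposes as a biproduct $R_Y\oplus R_Z$ in $\kk(X)$, with the two structure maps being the evident inclusions and projections; this is a standard consequence of additivity of $\K$\nb-the\-ory on the disjoint subquotient $A(Y\cup Z)=A(Y)\oplus A(Z)$. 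Relation \ref{it:ir} is the compatibility of restriction and corestriction: given $U\subset Y$ open and $C\subset Y$ closed, the subquotients $A(U\cap C)$ arise as a pullback/pushout corner of the square relating $A(U)$, $A(Y)$, $A(C)$, and the identity $i_U^Y r_Y^C = r_U^{U\cap C} i_{U\cap C}^C$ is precisely the commutativity of $\K$\nb-the\-ory applied to that square; I would cite the relevant lemma from \cite{MN:Bootstrap} on the behaviour of subquotients under intersection.

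The last three relations involve the boundary map $\delta$ and are where the real work lies. Relations \ref{it:id} and \ref{it:dr} assert that the boundary map of a six-term sequence is natural in the two ends of the extension: for a boundary pair $(U,C)$ and an open $C'\subset C$, respectively a closed $U'\subset U$, there is a morphism of extensions from the extension defining $(U,C')$ to that defining $(U,C)$ (respectively from $(U',C)$ to $(U,C)$), and naturality of $\delta$ gives $i_{C'}^C\delta_C^U=\delta_{C'}^U$ and $\delta_C^U r_U^{U'}=\delta_C^{U'}$. One must check that $(U,C')$ and $(U',C)$ are indeed boundary pairs, which is a short topological verification using the definitions of open-in-$Y$ and the lattice structure of $\LC(X)$. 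Relation \ref{it:rdi} is the subtlest: it is a compatibility between the boundary maps of two different extensions sharing the piece $W$, and I expect this to be the main obstacle. The approach I would take is to exhibit a single ambient extension or a commuting diagram of $\Star$homomorphisms relating $R_Y$, $R_Z$, $R_W$, $R_{W\setminus Y}$, $R_{W\setminus Z}$ that realizes both sides, and then to deduce the identity from the naturality of the connecting map together with the octahedral-type relations among the distinguished triangles in $\kk(X)$; concretely this amounts to recognizing both $\delta_Y^{W\setminus Y}i_{W\setminus Y}^Z$ and $r_Y^{W\setminus Z}\delta_{W\setminus Z}^Z$ as the image, under two routes around a commuting square, of the same canonical class, and this is where one has to be careful that the hypotheses ($Y\cup W$ containing $Y,W$ closed, $Z\cup W$ containing $Z,W$ open, $W\subset Y\cup Z$) are exactly what makes the square commute and the intermediate subquotients agree up to the natural isomorphisms of \cite{MN:Bootstrap}*{Lemma 2.15}. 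Once the diagram is in place, the identity follows formally.
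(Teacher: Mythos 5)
Your treatment of relations \ref{it:identities}--\ref{it:dr} is sound and matches the paper in substance: the paper likewise disposes of these by routine naturality of the six-term sequence and refers elsewhere for details. One difference of framing is worth noting. Since morphisms in $\NT_*$ \emph{are} graded natural transformations between the functors $\FK_Y^*$, the paper verifies each relation by evaluating on an arbitrary separable \csa{} $A$ over~$X$ and exhibiting natural commutative diagrams of subquotients $A(\cdot)$; you instead propose to work on the representing objects $R_Y$ and their $\KK(X)$-classes. Both are legitimate, being identified under $\Nat_*(\FK_Y^*,\FK_Z^*)\cong\KK_*(X;R_Z,R_Y)$, but the evaluation-on-$A$ route avoids having to know how the $R_Y$ assemble into extensions, which is precisely the information you would need for the hard relation.

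For relation \ref{it:rdi} --- the only one the paper proves in full, and the one carrying the real content --- your proposal stops short of a proof. You correctly identify the shape of the argument (both sides are the two routes around a commuting square of connecting maps arising from a morphism of extensions), but you do not produce that morphism of extensions, and constructing it is the entire substance of the proof. Concretely, the paper builds, for each $A$ over~$X$, a map of short exact sequences
\[
\bigl(A(W\setminus Y)\into A(Y\cup W)\onto A(Y)\bigr)\longrightarrow\bigl(A(Z)\into A(Z\cup W)\onto A(W\setminus Z)\bigr)
\]
by splicing three elementary diagrams: the restriction of the first extension to $W$ (using that $Y\cap W$ is closed in both $Y$ and $W$), the corestriction of the second to $W$ (using that $Z\cap W$ is open in both $Z$ and $W$), and a middle comparison of the two resulting extensions of $A(W)$ with the identity map on $A(W)$ --- this last step is where the hypothesis $W\subset Y\cup Z$ enters, via $W\setminus Y\subset Z\cap W$ being open and $W\setminus Z\subset Y\cap W$ being closed. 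Naturality of the six-term sequence applied to the composite morphism of extensions then yields exactly the asserted commuting square. Your appeal to ``octahedral-type relations among distinguished triangles'' is neither needed nor sufficient here: it does not by itself supply this map of extensions. Until you exhibit it, together with the topological verifications just listed, the proof of \ref{it:rdi} is incomplete.
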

\begin{proof}
We only prove \ref{it:rdi}, 
because the other relations can be proved 
similarly and more easily 
(their proofs can be found in \cite{bentmann}*{Section 3.2}).

Let us take $Y,Z,W \in \LC(X)$ as in \ref{it:rdi}. 
Let us also take a \csa\ $A$ over~$X$. 
Since both $Y$ and $W$ are closed 
subsets of $Y \cup W \in \LC(X)$, 
$Y \cap W$ is closed both in $Y$ and in $W$. 
Therefore we have a commutative diagram with exact rows
\[
\xymatrix{
0 \ar[r] & A(W \setminus Y)\ar@{=}[d]\ar[r] &  A(Y \cup W) \ar[r] \ar[d] 
& A(Y)\ar[r] \ar[d] & 0\phantom{.}\\
0 \ar[r] & A(W \setminus Y) \ar[r] &  A(W) \ar[r] 
& A(Y \cap W)\ar[r] & 0.
}
\]
Since both $Z$ and $W$ are open 
subsets of $Z \cup W \in \LC(X)$, 
$Z \cap W$ is open both in $Z$ and in $W$. 
Therefore we have a commutative diagram with exact rows
\[
\xymatrix{
0 \ar[r] & A(Z \cap W)\ar[d]\ar[r] &  A(W) \ar[r] \ar[d] 
& A(W \setminus Z)\ar[r] \ar@{=}[d] & 0\phantom{.}\\
0 \ar[r] & A(Z) \ar[r] &  A(Z \cup W) \ar[r] 
& A(W \setminus Z)\ar[r] & 0.
}
\]
From $W \subset Y\cup Z$, 
we get $W \setminus Y \subset Z \cap W$ 
and $W \setminus Z \subset Y \cap W$. 
Since $W \setminus Y$ is open in $W$, 
we see that $W \setminus Y$ is open in $Z \cap W$. 
Similarly, $W \setminus Z$ is closed in $Y \cap W$.
Hence we get a commutative diagram with exact rows
\[
\xymatrix{
0 \ar[r] & A(W \setminus Y) \ar[r] \ar[d] &  A(W)\ar@{=}[d] \ar[r] 
& A(Y \cap W)\ar[r] \ar[d] & 0\phantom{.}\\
0 \ar[r] & A(Z \cap W)\ar[r] &  A(W) \ar[r] 
& A(W \setminus Z)\ar[r] & 0.
}
\]
By combining these three diagrams, 
we obtain a commutative diagram with exact rows
\[
\xymatrix{
0 \ar[r] & A(W \setminus Y)\ar[d]\ar[r] &  A(Y \cup W) \ar[r] \ar[d] 
& A(Y)\ar[r] \ar[d] & 0\phantom{.}\\
0 \ar[r] & A(Z) \ar[r] &  A(Z \cup W) \ar[r] 
& A(W \setminus Z)\ar[r] & 0.
}
\]
From this digram, we get a commutative diagram 
\[
\xymatrix@C-1pt{
\K_*\bigl(A(Y \cup W)\bigr) \ar[d]^{ri} \ar[r]^r & \K_*\bigl(A(Y)\bigr) \ar[r]|\circ^\delta \ar[d]^r 
& \K_*\bigl(A(W \setminus Y)\bigr) \ar[r]^i \ar[d]^i 
& \K_*\bigl(A(Y \cup W)\bigr)\phantom{.} \ar[d]^{ri}\\
\K_*\bigl(A(Z \cup W)\bigr) \ar[r]^r & \K_*\bigl(A(W \setminus Z)\bigr) \ar[r]|\circ^\delta 
& \K_*\bigl(A(Z)\bigr) \ar[r]^i & \K_*\bigl(A(Z \cup W)\bigr). 
}
\]
Now \ref{it:rdi} follows from the commutativity of 
the middle square of this natural diagram. 
\end{proof}

\begin{remark}
From Proposition \ref{pro:relations}\ref{it:biprodcut}, 
we see that the empty set $\emptyset$ is a zero object in $\NT_*$
(because initial objects in pre-additive categories are also terminal).
From this and other relations in Proposition \ref{pro:relations}, 
we can conclude that compositions of consecutive maps in six-term sequences 
associated to relatively open subset inclusions vanish.
\end{remark}

\begin{remark}
We usually denote the even and the odd component of the element~$i_U^Y$ 
in $\NT_*$ defined in Definition~\ref{def:generators} simply by~$i_U^Y$.
Often, sub- and superscripts are suppressed when clear from context.
Similar comments apply to $r$ and $\delta$. 
\end{remark}

\begin{definition}
Let $\Catsixterm_*$ be the universal 
$\Z/2$-graded pre-additive category 
whose set of objects is $\LC(X)$ 
and whose set of morphisms are generated 
by elements as in Definition \ref{def:generators} 
with the relations as in Proposition \ref{pro:relations}.
Let $\Catsixterm$ be the corresponding pre-additive category 
with object set $\LC(X)\times\{0,1\}$. 
\end{definition}

By Proposition \ref{pro:relations}, 
we have a canonical additive functor $\Catsixterm\to\NT$. 
This functor has been shown to be an isomorphism 
in all examples which have been investigated---including accordion spaces
and all four-point spaces (see \cites{bentmann,MN:Filtrated}). 
However there is an example $Q$ of a finite $T_0$-space 
for which the functor $\Catsixterm\to\NT$ seems to be non-faithful 
(see Remark \ref{rem:Q}). 
For such spaces one would need to modify the definition of the category $\Catsixterm$, 
but we do not pursue this problem in this paper.

Let $\Fsixterm\colon\Mod(\NT)\to\Mod(\Catsixterm)$ be the functor 
induced by the canonical functor $\Catsixterm\to\NT$. 

\begin{definition} \label{def:FKsixterm}
We define \emph{concrete filtered $\K$\nb-the\-ory} $\FKsixterm\colon\kk(X)\to\Mod(\Catsixterm)$ as the composition $\Fsixterm\circ\FK$.
\end{definition}

\begin{remark}
As noted above, filtered $\K$\nb-the\-ory $\FK$ and concrete filtered $\K$\nb-the\-ory $\FKsixterm$ coincide for accordion spaces and all four-point spaces.
\end{remark}

\begin{definition}
An $\NT$-module $M$ is called \emph{exact} if, for all $Y\in\LC(X)$ and $U\in\Op(Y)$, the sequence
\[
\xymatrix{
M(U,0) \ar[r]^-{i} & M(Y,0) \ar[r]^-{r} & M(Y\setminus U, 0) \ar[d]^-{\delta} \\
M(Y\setminus U, 1) \ar[u]^-{\delta} & M(Y,1) \ar[l]^-{r} & M(U,1) \ar[l]^-{i}
} \]
is exact.
An $\NT$-module $M$ is called \emph{\rrzero{}} if, for all $Y\in\LC(X)$ and $U\in\Op(Y)$, the map $\delta\colon M(Y\setminus U,0)\to M(U,1)$ vanishes.

In the same way, we define exact $\Catsixterm$-modules and \rrzero{} $\Catsixterm$-mod\-ules.
\end{definition}

\begin{remark} \label{rem:K0lift}
For a \csa{} $A$ over~$X$, the module $\FK(A)$ is exact. It follows from \cite{Bentmann:Intermediate_cancellation}*{Lemma~3.4} that, if~$A$ is tight over~$X$, then $\FK(A)$ is \rrzero{} if and only if the underlying \csa{} of~$A$ is \emph{$\K_0$-liftable} in the sense of Pasnicu--R\o rdam~\cite{pasnicurordam}. By \cite{linrordam}*{Proposition~4}, all real-rank-zero \csa s are $\K_0$-liftable. By Theorem~4.2 and Example~4.8 of~\cite{pasnicurordam}, a tight, purely infinite \csa{} $A$ over~$X$ has real rank zero if and only if $\FK(A)$ is \rrzero{}. Analogous remarks apply with $\FKsixterm(A)$ in place of $\FK(A)$.
\end{remark}

The following theorem is the basis for the corollaries obtained in Section~\ref{sec:accordion}.

\begin{theorem}[{\cites{bentmann_koehler,MN:Filtrated,kirchberg}}] \label{bentmann}
Let $X$ be an accordion space. The canonical functor $\Catsixterm\to\NT$ is an isomorphism. Moreover, if $A$ and $B$ are stable Kirchberg $X$\nb-al\-ge\-bras with all simple subquotients in the bootstrap class, then any isomorphism $\FK(A)\to\FK(B)$ lifts to an $X$\nb-equi\-vari\-ant \Star{}isomorphism $A\to B$.
\end{theorem}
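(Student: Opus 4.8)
The plan is to deduce Theorem~\ref{bentmann} by assembling three ingredients that are, by this point, essentially ``off the shelf'': the Universal Coefficient Theorem of Meyer--Nest over totally ordered spaces together with its extension to accordion spaces by K\"ohler and the second-named author, Kirchberg's lifting theorem for $\KK(X)$-equivalences, and the identification $\Catsixterm\cong\NT$ for accordion spaces. First I would dispose of the last point: for an accordion space one must verify that the canonical additive functor $\Catsixterm\to\NT$ is an isomorphism. This is a finite, space-by-space check of which natural transformations exist between the functors $\FK_Y^*$; the relations of Proposition~\ref{pro:relations} already show the functor is full, so the content is faithfulness, i.e.\ that the listed relations exhaust all relations among the generators $i$, $r$, $\delta$. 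Since an accordion space has at most the combinatorial complexity of a zig-zag of length $n$, this reduces to the computations carried out in \cite{MN:Filtrated} and \cite{bentmann}, and I would simply cite those; the point worth stressing is that once $\Catsixterm\cong\NT$, the invariants $\FK$ and $\FKsixterm$ literally coincide on $\kk(X)$, so there is no distinction to worry about in the rest of the argument.

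With that in hand, the core of the theorem is: an isomorphism $\FK(A)\to\FK(B)$ of $\NT$-modules lifts to a $\KK(X)$-equivalence $A\to B$, and then Kirchberg's theorem upgrades this to an $X$-equivariant $^*$\nobreakdash-isomorphism (using that $A$, $B$ are stable Kirchberg $X$-algebras). So I would structure the proof as two steps. Step one: invoke the Meyer--Nest UCT. Over a totally ordered finite space, \cite{MN:Filtrated} constructs a spectral sequence / short exact sequence of the form
\[
0 \to \Ext^1_{\NT_*}\bigl(\FK^*(A)[1],\FK^*(B)\bigr) \to \KK_0(X;A,B) \to \Hom_{\NT_*}\bigl(\FK^*(A),\FK^*(B)\bigr) \to 0,
\]
valid when the simple subquotients of $A$ lie in the bootstrap class; \cite{bentmann_koehler} extends exactly this to accordion spaces. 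The key homological input is that $\NT_*$ (equivalently $\Catsixterm_*$, now that they agree) has global dimension at most two, so the higher $\Ext$ terms vanish and one gets this clean exact sequence rather than a spectral sequence with a possibly nontrivial edge map. Step two: given the isomorphism $\alpha\colon\FK(A)\xrightarrow{\ \sim\ }\FK(B)$, translate it back to a graded $\NT_*$-module isomorphism $\alpha^*\colon\FK^*(A)\to\FK^*(B)$ via the equivalence $D$, pick any preimage $x\in\KK_0(X;A,B)$ of $\alpha^*$ under the surjection above, and check that $x$ is invertible. Invertibility is the standard ``five-lemma'' style argument: choose also a preimage $y$ of $(\alpha^*)^{-1}$; then $xy$ and $yx$ map to the identities in the respective $\Hom$-groups, hence differ from $1$ by elements of the $\Ext$-subgroups, which are nilpotent (they square to zero as they factor through a shift), so $xy$ and $yx$ are invertible, whence $x$ is invertible.

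The main obstacle, and the only place where real work beyond citation is hidden, is ensuring the hypotheses of the UCT are genuinely met and that the homological algebra behaves as claimed for \emph{accordion} spaces rather than merely linear ones. Concretely: (i) one needs that $\NT_*$ for an accordion space is a hereditary-enough category---projective dimension of the relevant modules bounded by one, so that $\Ext^{\geq 2}$ vanishes and the sequence above is exact on the right; this is where the ``accordion'' hypothesis is used, and it fails for the non-accordion four-point spaces, which is the whole reason those give counterexamples. (ii) one needs the freeness-of-bootstrap-subquotients assumption to know that each $A$ is ``$\NT_*$-projectively resolvable'' in Meyer--Nest's sense, i.e.\ that the localising subcategory generated by the representing objects $R_Y$ contains $A$; over accordion spaces this follows because the $R_Y$ generate everything built from bootstrap-class simple subquotients. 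I would therefore present Step one as: \emph{cite \cite{MN:Filtrated} and \cite{bentmann_koehler} for the UCT and the statement $\pd_{\NT_*}\leq 1$ over accordion spaces}, Step two as the short invertibility argument above, and a final line invoking \cite{kirchberg} to pass from $\KK(X)$-equivalence to $X$-equivariant $^*$\nobreakdash-isomorphism. I do not expect to reprove any of the three imported theorems; the value added is only the bookkeeping that glues them together and the observation that on accordion spaces $\FK=\FKsixterm$.
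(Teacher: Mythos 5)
Your proposal is correct and coincides with what the paper does: Theorem~\ref{bentmann} is stated as an import, with the isomorphism $\Catsixterm\to\NT$ and the UCT over accordion spaces taken from \cite{MN:Filtrated} and \cite{bentmann_koehler}, the standard nilpotent-ideal argument lifting a module isomorphism to a $\KK(X)$-equivalence, and \cite{kirchberg} supplying the passage to an $X$\nb-equi\-vari\-ant \Star{}isomorphism for stable Kirchberg $X$\nb-al\-ge\-bras. Two small imprecisions worth noting: the homological input is that the modules $\FK(A)$ have projective dimension at most one (your later point~(i)), not that $\NT_*$ has global dimension two, and fullness of $\Catsixterm\to\NT$ does not follow from the relations of Proposition~\ref{pro:relations} alone (those only give well-definedness); both fullness and faithfulness rest on the $\KK(X;R_Z,R_Y)$ computations in the cited references, which you defer to anyway.
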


\section{Sheaves}
  \label{sec:sheaves}

In this section we introduce sheaves and cosheaves and recall that it suffices to specify them on a basis for the topology.

Let $X$ be an arbitrary topological space. 
Let $\Basis$ be a basis for the topology on~$X$. 
We note that the set $\Op$ of all open subsets is 
the largest basis for the topology on $X$. 
We also note that for a finite space $X$, 
the collection $\bigl\{\widetilde{\{x\}}\mid x\in X\bigr\}$
is an example of a basis. 
The set $\Basis$ is a category whose morphisms 
are inclusions. 

\begin{definition}
A \emph{covering} of a set $U \in \Basis$ is a collection $\{U_j\}_{j\in J} \subset \Basis$ such that $U_j\subseteq U$ for all $j\in J$ and $\bigcup_{j\in J} U_j=U$.
A \emph{presheaf on $\Basis$} is a contravariant functor $M\colon\Basis\to\Ab$. It is a \emph{sheaf on $\Basis$} if, for every 
$U \in \Basis$, every covering $\{U_j\}_{j\in J} \subset \Basis$ of~$U$, 
and all coverings $\{U_{jkl}\}_{l\in L_{jk}} \subset \Basis$ 
of $U_j\cap U_k$, the sequence
\begin{multline}
  \label{eq:sheaf_condition_on_basis}
0\longrightarrow M(U)
\xrightarrow{\left(M(i_{U}^{U_j})\right)}
\prod_{j\in J} M(U_j)
\xrightarrow{\left(M(i^{U_{jkl}}_{U_j})-M(i^{U_{jkl}}_{U_k})\right)}
\prod_{j,k\in J}\prod_{l\in L_{jk}} M(U_{jkl})
\end{multline}
is exact. A morphism for sheaves is a natural transformation of functors.
We denote by $\Sheaf(\Basis)$ the category of sheaves on $\Basis$.
\end{definition}

If $\Basis$ is closed under intersection 
(for example if $\Basis = \Op$), 
then the definition of sheaf can be replaced with 
the exactness of the sequence
\begin{multline*}
0\longrightarrow M(U)
\xrightarrow{\left(M(i_{U}^{U_j})\right)}
\prod_{j\in J} M(U_j)
\xrightarrow{\left(M(i^{U_j\cap U_k}_{U_j})-M(i^{U_j\cap U_k}_{U_k})\right)}
\prod_{j,k\in J} M(U_j\cap U_k)
\end{multline*}
for all $U \in \Basis$ and every covering $\{U_j\}_{j\in J} \subset \Basis$ of $U$.

\begin{lemma}
  \label{lem:sheaf_equivalence}
For a basis $\Basis$ for the topology on $X$, 
the restriction functor $\Sheaf(\Op)\to\Sheaf(\Basis)$ 
is an equivalence of categories.
\end{lemma}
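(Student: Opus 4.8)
The statement to prove is that, for any basis $\Basis$ of the topology on $X$, the restriction functor $\Sheaf(\Op)\to\Sheaf(\Basis)$ is an equivalence of categories. My plan is to construct an explicit quasi-inverse by the classical ``sheafification along a basis'' procedure: given a sheaf $N$ on $\Basis$, extend it to a presheaf $\widehat N$ on $\Op$ by the formula
\[
\widehat N(U)=\plim_{\,V\in\Basis,\ V\subseteq U} N(V),
\]
the limit being taken over the (cofiltered) poset of basic open sets contained in $U$, with the obvious transition maps; on morphisms $\widehat N$ is induced by functoriality of the limit. One then checks that $\widehat N$ is a sheaf on $\Op$, that $\widehat N|_\Basis\cong N$ naturally, and that for a sheaf $M$ on $\Op$ the canonical comparison map $M\to\widehat{M|_\Basis}$ is an isomorphism. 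Finally one verifies that the assignment $N\mapsto\widehat N$ is functorial and that the two displayed isomorphisms are natural, which exhibits the pair as an adjoint equivalence.

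\textbf{Key steps, in order.} First I would show $\widehat N$ is separated and glues: given an open covering $\{U_j\}$ of $U\in\Op$, refine each $U_j$ by basic opens; an element of $\widehat N(U)$ is by definition a compatible family over \emph{all} basic opens in $U$, so the sheaf axiom for $\widehat N$ reduces to comparing compatible families over basic refinements, where the sheaf condition \eqref{eq:sheaf_condition_on_basis} for $N$ on $\Basis$ (with the three-fold basic covers of the pairwise intersections $U_j\cap U_k$, which need not be basic) applies directly. Second, for $V\in\Basis$ the poset $\{W\in\Basis:W\subseteq V\}$ has $V$ as a terminal object, so $\widehat N(V)=N(V)$, giving the natural isomorphism $\widehat N|_\Basis\cong N$. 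Third, for a sheaf $M$ on $\Op$ and $U$ open, the covering of $U$ by the basic opens it contains together with the sheaf axiom for $M$ shows that $M(U)\to\plim_{V\in\Basis, V\subseteq U}M(V)=\widehat{M|_\Basis}(U)$ is an isomorphism; this is the unit of the adjunction being invertible. Fourth, collect naturality of all these maps (routine diagram chases) to conclude equivalence of categories.

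\textbf{Main obstacle.} The one genuinely delicate point is the sheaf axiom for $\widehat N$, precisely because $\Basis$ is only assumed to be a basis and need not be closed under finite intersections: the sets $U_j\cap U_k$ occurring in the gluing condition need not lie in $\Basis$, which is exactly why the definition of sheaf on $\Basis$ is phrased with \emph{auxiliary} basic coverings $\{U_{jkl}\}$ of $U_j\cap U_k$ rather than with $U_j\cap U_k$ itself. One must set up the bookkeeping so that a compatible family over all basic subsets of $U$ restricts to compatible families over all basic subsets of each $U_j$, and conversely that compatibility of the pieces over the $U_{jkl}$'s forces agreement; this requires choosing, for each relevant pair of basic opens, a common basic refinement, and checking that the resulting element of $\widehat N(U)$ is independent of these choices. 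Once this cofinality/refinement argument is carried out carefully, the remaining verifications (terminal-object argument for $\widehat N|_\Basis\cong N$, the covering argument for $M\cong\widehat{M|_\Basis}$, and naturality) are formal. The closure-under-intersection case noted after the definition is a shortcut but not needed for the general statement.
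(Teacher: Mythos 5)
Your proposal is correct and fleshes out precisely the standard basis-extension argument that the paper disposes of by citing the Stacks Project: you build the quasi-inverse $N\mapsto\widehat N$ via $\widehat N(U)=\plim_{V\in\Basis,\,V\subseteq U}N(V)$ and verify the sheaf axiom using auxiliary basic refinements of the intersections $U_j\cap U_k$, which is exactly the point the paper's cited reference handles. The only quibble is your parenthetical claim that the poset of basic opens contained in $U$ is cofiltered --- two basic opens in $U$ may be disjoint with no common basic lower bound --- but since the limit is just the group of compatible families and nothing in your argument actually uses cofilteredness, this is harmless.
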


\begin{proof}
This is a well-known fact in algebraic geometry (see, for instance the encyclopedic treatment in \cite{stacks-project}*{Lemma \href{http://math.columbia.edu/algebraic_geometry/stacks-git/locate.php?tag=009O}{009O}}). We confine ourselves on mentioning that \eqref{eq:sheaf_condition_on_basis} provides a formula for computing $M(U)$ for an arbitrary open subset~$U$.
\end{proof}

\begin{definition}
A \emph{precosheaf on $\Basis$} is a covariant functor $M\colon\Basis\to\Ab$. 
It is a \emph{cosheaf on $\Basis$} if, for every 
$U \in \Basis$, every covering $\{U_j\}_{j\in J} \subset \Basis$ of $U$, 
and all coverings $\{U_{jkl}\}_{l\in L_{jk}} \subset \Basis$ 
of $U_j\cap U_k$, the sequence
\begin{multline}
  \label{eq:cosheaf_condition_on_basis}
\bigoplus_{j,k\in J}\bigoplus_{l\in L_{jk}} M(U_{jkl})
\xrightarrow{\left(M(i_{U_{jkl}}^{U_j})-M(i_{U_{jkl}}^{U_k})\right)}
\bigoplus_{j\in J} M(U_j)
\xrightarrow{\left(M(i_{U_j}^U)\right)}
M(U) \longrightarrow 0.
\end{multline}
is exact. A morphism for cosheaves is a natural transformation of functors.
We denote by $\CoSheaf(\Basis)$ the category of cosheaves on $\Basis$.
\end{definition}

Similarly to the case of sheaves, 
if $\Basis$ is closed under intersection, 
the definition of cosheaf can be replaced with 
the exactness of the sequence
\begin{multline}
  \label{eq:cosheaf_condition_on_topology}
\bigoplus_{j,k\in J} M(U_j\cap U_k)
\xrightarrow{\left(M(i_{U_j\cap U_k}^{U_j})-M(i_{U_j\cap U_k}^{U_k})\right)}
\bigoplus_{j\in J} M(U_j)
\xrightarrow{\left(M(i_{U_j}^U)\right)}
M(U) \longrightarrow 0.
\end{multline}
for $U \in \Basis$ and a covering $\{U_j\}_{j\in J} \subset \Basis$ of $U$.

\begin{lemma}
  \label{lem:cosheaf_equivalence}
The restriction functor $\CoSheaf(\Op)\to\CoSheaf(\Basis)$ is an equivalence of categories.
\end{lemma}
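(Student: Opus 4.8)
The plan is to deduce the statement from Lemma~\ref{lem:sheaf_equivalence} by passing to opposite categories. A precosheaf on a basis $\Basis'$, that is, a covariant functor $M\colon\Basis'\to\Ab$, is the same datum as a contravariant functor $\Basis'\to\Ab^{\mathrm{op}}$, i.e.\ a presheaf on~$\Basis'$ with values in the opposite category~$\Ab^{\mathrm{op}}$. Under this identification the cosheaf condition~\eqref{eq:cosheaf_condition_on_basis} --- exactness of a diagram built from a coequalizer and coproducts in~$\Ab$ --- becomes exactly the sheaf condition~\eqref{eq:sheaf_condition_on_basis} with all products formed in~$\Ab^{\mathrm{op}}$, where products are coproducts of~$\Ab$, equalizers are coequalizers of~$\Ab$, and ``$0\to\cdots$ exact'' turns into ``$\cdots\to 0$ exact''. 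Thus $\CoSheaf(\Basis')$ is canonically isomorphic to the category of $\Ab^{\mathrm{op}}$-valued sheaves on~$\Basis'$, compatibly with restriction along inclusions of bases; in particular the restriction functor $\CoSheaf(\Op)\to\CoSheaf(\Basis)$ is identified with the restriction functor for $\Ab^{\mathrm{op}}$-valued sheaves.

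Next I would observe that Lemma~\ref{lem:sheaf_equivalence} remains valid for sheaves with values in an arbitrary category admitting small limits, not merely in~$\Ab$. Indeed, the cited argument reconstructs $M(U)$ for an arbitrary open~$U$ from the values on basis elements --- via~\eqref{eq:sheaf_condition_on_basis}, equivalently as the limit of the $M(V)$ over the basis elements $V\subseteq U$ --- and the checks that this reconstruction is functorial, defines a sheaf, and restricts back to the given $\Basis$-sheaf are purely formal, using nothing beyond the existence of these limits. Since~$\Ab$ is complete and cocomplete, $\Ab^{\mathrm{op}}$ admits all small limits, so the sheaf equivalence applies with target~$\Ab^{\mathrm{op}}$; combined with the first paragraph this proves the lemma.

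Dually, the resulting quasi-inverse to the restriction functor sends a cosheaf $M$ on~$\Basis$ to the precosheaf on~$\Op$ whose value at an open set~$U$ is the colimit of the $M(V)$ over the basis elements $V\subseteq U$; by~\eqref{eq:cosheaf_condition_on_basis} this colimit is computed as the cokernel appearing there, for any covering $\{U_j\}_{j\in J}\subset\Basis$ of~$U$ together with any choice of basis coverings of the intersections $U_j\cap U_k$. I expect no serious obstacle: the only point needing care --- and it is precisely the dual of what gives the sheaf statement its content --- is that this construction is independent of the chosen coverings and functorial in~$U$, which is settled by the standard refinement argument comparing a covering with a common refinement, or, as above, is simply inherited from the sheaf case.
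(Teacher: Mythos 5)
Your proposal is correct and takes essentially the same route as the paper, which likewise disposes of this lemma by noting it is the formal dual of Lemma~\ref{lem:sheaf_equivalence}, with \eqref{eq:cosheaf_condition_on_basis} supplying the colimit formula for $M(U)$ on an arbitrary open set. Your write-up merely makes the duality explicit via $\Ab^{\mathrm{op}}$-valued sheaves, which is a clean way to justify the paper's ``follows in an analogous way.''
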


\begin{proof}
This statement is the dual of Lemma \ref{lem:sheaf_equivalence} and follows in an analogous way. Again, \eqref{eq:cosheaf_condition_on_basis} can be used to compute $M(U)$ for an arbitrary open subset~$U$.
\end{proof}

With regard to the next section we remark that every finite $T_0$-space (more generally every Alexandrov space) comes with canonical bases for the open subsets, namely $\bigl\{\widetilde{\{x\}}\mid x\in X\bigr\}$, and for the closed subsets: $\bigl\{\overline{\{x\}}\mid x\in X\bigr\}$.

\begin{lemma}
  \label{lem:presheaf_on_canonical_base_is_sheaf}
Let $X$ be a finite $T_0$-space and let $S$ be a pre(co)sheaf on the basis $\Basis=\bigl\{\widetilde{\{x\}}\mid x\in X\bigr\}$. Then $S$ is a \textup{(}co\textup{)}sheaf.
\end{lemma}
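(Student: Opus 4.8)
The plan is to reduce both halves of the statement to one combinatorial feature of the canonical base, namely that \emph{it admits no nontrivial coverings}: if $U=\widetilde{\{x\}}\in\Basis$ and $\{U_j\}_{j\in J}\subset\Basis$ is a covering of~$U$, then $U=U_{j_0}$ for some $j_0\in J$. Indeed, since $x\in\bigcup_j U_j$, some $U_{j_0}$ contains~$x$; as $U_{j_0}$ is open and $\widetilde{\{x\}}$ is the smallest open set containing~$x$, we get $U\subset U_{j_0}\subset U$, hence equality. (In particular $J\neq\emptyset$, since basic open sets are nonempty.) I would record this as the first step and immediately note that it also applies to the refinement coverings occurring in the (co)sheaf conditions: taking $k=j_0$ gives $U_j\cap U_{j_0}=U_j$, so the covering $\{U_{jj_0l}\}_{l\in L_{jj_0}}$ of $U_j\cap U_{j_0}$ contains $U_j$ itself.

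For the sheaf case, fix $U=\widetilde{\{x\}}\in\Basis$, a covering $\{U_j\}_{j\in J}$, refinement coverings $\{U_{jkl}\}_{l\in L_{jk}}$ of the $U_j\cap U_k$, and choose $j_0$ with $U_{j_0}=U$. Exactness of \eqref{eq:sheaf_condition_on_basis} at $S(U)$ is immediate, because the $j_0$-component of the first map is $S(\id_U)=\id_{S(U)}$. For exactness at $\prod_j S(U_j)$, the inclusion $\im\subset\ker$ is functoriality as usual; conversely, given a family $(s_j)_j$ killed by the second map, I would set $s:=s_{j_0}$ and verify $S(i_U^{U_j})(s)=s_j$ for every~$j$. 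Choosing $l_0\in L_{jj_0}$ with $U_{jj_0l_0}=U_j$ (Step~1), the relation imposed by the second map in the index $(j,j_0,l_0)$ reads precisely $s_j=S(i_U^{U_j})(s_{j_0})$, which is exactly what is needed; uniqueness of~$s$ is the injectivity already established.

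The cosheaf statement is formally dual, and I would spell it out since the middle exactness needs a short bookkeeping argument. Surjectivity of the second map in \eqref{eq:cosheaf_condition_on_basis} holds because its $j_0$-summand is $S(\id_U)=\id_{S(U)}$, and the composite of the two maps vanishes by functoriality. Given $(s_j)_{j\in J}$ with $\sum_j S(i_{U_j}^U)(s_j)=0$, for each $j\neq j_0$ choose $l_0(j)\in L_{jj_0}$ with $U_{jj_0l_0(j)}=U_j$ and let $t$ be the element of $\bigoplus_{j,k}\bigoplus_l S(U_{jkl})$ supported on the slots $(j,j_0,l_0(j))$ with value $s_j\in S(U_j)=S(U_{jj_0l_0(j)})$ (a finite sum, as $(s_j)_j$ has finite support). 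Applying the first map of \eqref{eq:cosheaf_condition_on_basis} to~$t$ yields $s_j$ in component $j$ for $j\neq j_0$, and $-\sum_{j\neq j_0}S(i_{U_j}^U)(s_j)$ in component $j_0$; the latter equals $s_{j_0}$ by the hypothesis on $(s_j)_j$, using $S(i_{U_{j_0}}^U)=\id$. Hence $t$ maps to $(s_j)_j$, proving exactness in the middle.

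Since these exhaust all instances of the (co)sheaf conditions for~$S$, this shows $S$ is a (co)sheaf. I do not expect a genuine obstacle; the only step requiring any care is organizing the telescoping in the cosheaf case so that the slots $(j,j_0,l_0(j))$ contribute correctly to each component, and once Step~1 is in place everything else is functoriality together with the identity $S(\id)=\id$.
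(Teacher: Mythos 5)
Your proof is correct and rests on exactly the observation the paper uses: the canonical base admits no nontrivial coverings, so every covering of $\widetilde{\{x\}}$ contains $\widetilde{\{x\}}$ itself. The paper states only this observation and leaves the remaining diagram bookkeeping implicit; your write-up just supplies those routine details.
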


\begin{proof}
This follows from the observation that in the basis $\Basis$ there are no non-trivial coverings, that is, if $\mathcal U$ is a covering of $U$, then $U\in\mathcal U$.
\end{proof}

\section{Filtered \texorpdfstring{$\K$}{K}-theory restricted to the canonical base}
  \label{sec:takeshi_invariant}
  
In this section, the functor $\FKtakeshi$ and the notions of unique path spaces and EBP spaces are introduced. The following lemma is straightforward to verify.

\begin{lemma} \label{upp}
For a finite $T_0$-space $X$ the following conditions are equivalent.
\begin{itemize}
\item For all $x,y \in X$, there is at most one path from $y$ to $x$. 
\item There are no elements $a,b,c,d$ in $X$ with $a<b<d$, $a<c<d$ and neither $b\leq c$ nor $c\leq b$.
\item For all $x,y\in X$ with $x\to y$, 
we have $\osi x \cup \csi y \in \LC(X)$.
\item For every boundary pair $(U,C)$, 
the pair $(\widetilde U, \overline C)$ is a boundary pair. 
\item For all $x\in X$, $\ob \{x\} = \displaystyle\bigsqcup_{y\to x} \osi y$.
\item For all $x\in X$, $\overline\partial \{x\} = \displaystyle\bigsqcup_{y\ot x} \csi y$.
\end{itemize}
\end{lemma}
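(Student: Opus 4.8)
Label the six conditions $(1)$--$(6)$ in the order listed. The plan is to translate everything into the specialisation preorder of $X$: identify $\osi{x}$ with the up-set ${\uparrow}x=\{z\in X:z\geq x\}$, $\csi{x}$ with the down-set ${\downarrow}x=\{z\in X:z\leq x\}$, open subsets with up-sets, closed subsets with down-sets, a path from $y$ to $x$ with a saturated (covering) chain $x=z_1<\dots<z_n=y$, and "$x\to y$" with "$x$ covers $y$". The one technical tool I would isolate at the outset is the elementary fact that a subset $Y\subseteq X$ lies in $\LC(X)$ if and only if it is \emph{order-convex} (meaning $a\leq z\leq b$ with $a,b\in Y$ forces $z\in Y$): indeed $Y\in\LC(X)\iff Y=\overline Y\cap\widetilde Y$, and $\overline Y\cap\widetilde Y={\downarrow}Y\cap{\uparrow}Y$. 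I also use repeatedly that two distinct covers of a common point are incomparable.

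First, $(1)\Leftrightarrow(2)$. For $\neg(2)\Rightarrow\neg(1)$: from $a<b<d$, $a<c<d$ with $b,c$ incomparable, splice a covering chain $a\to\cdots\to b$ with one $b\to\cdots\to d$ into a path from $d$ to $a$ through $b$, and similarly one through $c$; these are distinct, since the members of a single covering chain are pairwise comparable, so the one through $b$ cannot contain $c$. For $\neg(1)\Rightarrow\neg(2)$: given two distinct paths from $y$ to $x$, let $a$ be the last vertex on their common initial segment and $d$ the first vertex at which they meet again; the vertices $b,c$ at which the paths leave $a$ are distinct covers of $a$, hence incomparable, and $a<b<d$, $a<c<d$.

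Next, $(2)\Leftrightarrow(5)$; then $(2)\Leftrightarrow(6)$ follows by applying $(2)\Leftrightarrow(5)$ to the opposite order, as $(2)$ is self-dual and $(5)$ turns into $(6)$. Since a saturated chain from $x$ up to any $z>x$ begins with a cover of $x$, the equality $\obd{x}=\bigcup_{y\to x}\osi{y}$ holds unconditionally, so $(5)$ is equivalent to the statement that for every $x$ the distinct covers of $x$ have no common upper bound. That covers $b,c$ of $x$ under a common $d$ violate $(2)$ (with $a=x$) is immediate; conversely, given $a<b<d$, $a<c<d$ with $b,c$ incomparable, take $\tilde a$ \emph{maximal} among the common strict lower bounds of $b$ and $c$, and observe that a cover of $\tilde a$ below $b$ and one below $c$ must be distinct (else it would be a strictly larger common lower bound of $b,c$) and both lie under $d$.

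Finally I would close up $(2)\Leftrightarrow(3)\Leftrightarrow(4)$ using the convexity criterion. When $x\to y$, clearly $\osi{x}\cap\csi{y}=\emptyset$; for $a\leq z\leq b$ with $a,b\in\osi{x}\cup\csi{y}$, order-convexity is automatic unless $a\in\csi{y}$ and $b\in\osi{x}$, and in that case $a\leq y<x\leq b$, so if $z\notin\osi{x}\cup\csi{y}$ then either $z$ is incomparable to $x$ (yielding the configuration $a<z<b$, $a<x<b$ forbidden by $(2)$) or $z<x$, forcing $z$ incomparable to $y$ (yielding $a<z<x$, $a<y<x$) --- in both cases $\neg(2)$. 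Conversely $\neg(2)\Rightarrow\neg(5)$ gives a point $y$ with distinct covers $x,z$ and a common upper bound $b$, and then $y\leq z\leq b$ with $y,b\in\osi{x}\cup\csi{y}$, $z\notin\osi{x}\cup\csi{y}$, so $\osi{x}\cup\csi{y}\notin\LC(X)$, i.e.\ $\neg(3)$. For $(4)\Rightarrow(3)$, observe that $(\si{x},\si{y})$ is a boundary pair whenever $x\to y$ (the set $\si{x,y}$ is then the two-element interval $[y,x]\in\LC(X)$ and $\si{x}$ is open in it), so $(4)$ makes $(\osi{x},\csi{y})$ a boundary pair and in particular $\osi{x}\cup\csi{y}\in\LC(X)$. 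The remaining implication $(2)\Rightarrow(4)$ is the main obstacle: for a boundary pair $(U,C)$ one first checks $\widetilde U\cap\overline C=\emptyset$ (from $\widetilde U\cap C=\emptyset$ and order-convexity of $U\cup C$) and then must show $\widetilde U\cup\overline C$ is order-convex; the only surviving case for a would-be non-convexity witness $a\leq z\leq b$ is $a\in\overline C$, $b\in\widetilde U$, and there one has to choose $a$, $b$ and accompanying witnesses $c\in C$, $u\in U$ extremally so that the no-diamond hypothesis together with order-convexity of $U\cup C$ either traps $z$ inside $U\cup C$ or produces an explicit forbidden quadruple. I expect essentially all of the work to sit in this final case analysis, which one might alternatively organise as an induction on $|X|$.
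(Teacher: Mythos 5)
Your reductions among conditions (1), (2), (3), (5) and (6) are correct, and so is $(4)\Rightarrow(3)$. The paper offers no proof of this lemma (it is declared ``straightforward to verify''), so there is no argument of the authors to compare against; but the order-convexity characterisation of $\LC(X)$ and the ``no open interval contains two incomparable points'' reformulation of (2) are exactly the right tools, and each of the individual steps you give checks out.

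The implication you leave open, $(2)\Rightarrow(4)$, is however not merely laborious: as literally stated, condition (4) does not follow from the other five, so no case analysis or induction will close the gap. Take $X=\{a,z,b,c,u\}$ with specialisation order generated by $a<z<b$, $a<c$ and $u<b$. Every open interval is a chain (only $(a,b)=\{z\}$ is nonempty), so (1), (2), (5), (6) hold, and the four sets $\widetilde{\{x\}}\cup\overline{\{y\}}$ for the covering pairs are order-convex, so (3) holds. But $Y=\{u,c\}$ is a two-point antichain, hence order-convex and in $\LC(X)$, with $\{u\}=Y\cap\widetilde{\{u\}}$ relatively open; thus $(U,C)=(\{u\},\{c\})$ is a boundary pair in the sense of Definition~\ref{def:generators}. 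Yet $\widetilde U\cup\overline C=\{u,b\}\cup\{a,c\}$ is not order-convex, since $a<z<b$ with $z$ omitted, so it is not in $\LC(X)$ and $(\widetilde U,\overline C)$ is not a boundary pair: condition (4) fails. The defect lies in the statement of (4) rather than in your strategy. The only place the paper invokes (4) is in the proof of Lemma~\ref{toplemma}, where the boundary pair additionally satisfies $U\subseteq\widetilde C$ and $C\subseteq\overline U$; in the example above this extra hypothesis fails, since $u\notin\widetilde{\{c\}}$. You should therefore either prove the implication from (2) for boundary pairs satisfying these extra conditions --- which is where your ``extremal witnesses'' idea belongs --- or record explicitly that (4) as printed is strictly stronger than the remaining five conditions.
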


\begin{definition}
A finite $T_0$-space $X$ is called a \emph{unique path space} if it satisfies the equivalent conditions specified in Lemma~\ref{upp}.
\end{definition}

Let $X$ be a unique path space.

\begin{definition}
Let $\Cattakeshi$ denote the universal pre-additive category generated by objects $\tcsi x$, $\gosi x$ for all $x\in X$ and morphisms $r_{\tcsi x}^{\tcsi y}$, $\delta_{\tcsi y}^{\gosi x}$ and $i_{\gosi x}^{\gosi y}$ when $x\to y$, subject to the relations
\begin{equation}
\sum_{x\to y}  r_{\tcsi x}^{\tcsi y} \delta_{\tcsi y}^{\gosi x} = \sum_{z\to x} \delta_{\tcsi x}^{\gosi z} i_{\gosi z}^{\gosi x}
\end{equation}
for all $x\in X$.
\end{definition}

\begin{lemma}
In the category $\Catsixterm$, we have the relation
\begin{equation*}
\sum_{x\to y}  r_{\overline{\{x\}}}^{\overline{\{y\}}} \delta_{\overline{\{y\}}}^{\widetilde{\{x\}}} = \sum_{z\to x} \delta_{\overline{\{x\}}}^{\widetilde{\{z\}}} i_{\widetilde{\{z\}}}^{\widetilde{\{x\}}}
\end{equation*}
for all $x\in X$.
\end{lemma}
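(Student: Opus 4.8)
The plan is to realise both sides of the claimed relation as two expansions of a single morphism produced by Proposition~\ref{pro:relations}\ref{it:rdi}, applied to the locally closed set obtained by glueing $\widetilde{\{x\}}$ and $\overline{\{x\}}$ along $\{x\}$.

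First I would record two consequences of the unique path property. Since $(\{x\},\overline\partial\{x\})$ is a boundary pair (realised inside $\overline{\{x\}}$) and $(\ob\{x\},\{x\})$ is a boundary pair (realised inside $\widetilde{\{x\}}$), the boundary-pair clause of Lemma~\ref{upp} shows that $(\widetilde{\{x\}},\overline\partial\{x\})$ and $(\ob\{x\},\overline{\{x\}})$ are boundary pairs; in particular $S_x := \widetilde{\{x\}}\cup\overline{\{x\}}$ lies in $\LC(X)$ with $\widetilde{\{x\}}$ relatively open, $\overline{\{x\}}$ relatively closed, $S_x\setminus\widetilde{\{x\}}=\overline\partial\{x\}$ and $S_x\setminus\overline{\{x\}}=\ob\{x\}$. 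Second, by the last two clauses of Lemma~\ref{upp}, $\overline\partial\{x\} = \bigsqcup_{x\to y}\overline{\{y\}}$ and $\ob\{x\} = \bigsqcup_{z\to x}\widetilde{\{z\}}$ are topologically disjoint unions, so each $\overline{\{y\}}$ is clopen in $\overline\partial\{x\}$ and each $\widetilde{\{z\}}$ is clopen in $\ob\{x\}$.

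The core step is to apply Proposition~\ref{pro:relations}\ref{it:rdi} with $Y=\overline{\{x\}}$, $Z=\widetilde{\{x\}}$ and $W=S_x$; the hypotheses hold since $Y\cup W=Z\cup W=S_x\in\LC(X)$ and $W\subseteq Y\cup Z=S_x$, and the relation reads
\[
\delta_{\overline{\{x\}}}^{\ob\{x\}}\, i_{\ob\{x\}}^{\widetilde{\{x\}}} = r_{\overline{\{x\}}}^{\overline\partial\{x\}}\, \delta_{\overline\partial\{x\}}^{\widetilde{\{x\}}} \colon \FK_{\overline{\{x\}}}\to\FK_{\widetilde{\{x\}}}.
\]
I would then expand each side. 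Into the right-hand side insert the identity $\id_{\overline\partial\{x\}} = \sum_{x\to y} r_{\overline\partial\{x\}}^{\overline{\{y\}}}\, i_{\overline{\{y\}}}^{\overline\partial\{x\}}$ coming from (iterated) Proposition~\ref{pro:relations}\ref{it:biprodcut}; simplifying the resulting summands with \ref{it:rr} (giving $r_{\overline{\{x\}}}^{\overline\partial\{x\}} r_{\overline\partial\{x\}}^{\overline{\{y\}}}=r_{\overline{\{x\}}}^{\overline{\{y\}}}$) and \ref{it:id} (giving $i_{\overline{\{y\}}}^{\overline\partial\{x\}}\delta_{\overline\partial\{x\}}^{\widetilde{\{x\}}}=\delta_{\overline{\{y\}}}^{\widetilde{\{x\}}}$) turns the right-hand side into $\sum_{x\to y} r_{\overline{\{x\}}}^{\overline{\{y\}}}\delta_{\overline{\{y\}}}^{\widetilde{\{x\}}}$, i.e.\ the left-hand side of the Lemma. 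Dually, into the left-hand side insert $\id_{\ob\{x\}} = \sum_{z\to x} r_{\ob\{x\}}^{\widetilde{\{z\}}}\, i_{\widetilde{\{z\}}}^{\ob\{x\}}$ and simplify with \ref{it:dr} ($\delta_{\overline{\{x\}}}^{\ob\{x\}} r_{\ob\{x\}}^{\widetilde{\{z\}}}=\delta_{\overline{\{x\}}}^{\widetilde{\{z\}}}$) and \ref{it:ii} ($i_{\widetilde{\{z\}}}^{\ob\{x\}} i_{\ob\{x\}}^{\widetilde{\{x\}}}=i_{\widetilde{\{z\}}}^{\widetilde{\{x\}}}$); this turns the left-hand side into $\sum_{z\to x} \delta_{\overline{\{x\}}}^{\widetilde{\{z\}}} i_{\widetilde{\{z\}}}^{\widetilde{\{x\}}}$, i.e.\ the right-hand side of the Lemma. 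Combining the two computations with the displayed relation, the two sides of the Lemma coincide.

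The only genuinely delicate part is the bookkeeping: before invoking \ref{it:ii}, \ref{it:rr}, \ref{it:id}, \ref{it:dr} one must verify all the relative open/closed inclusions involved -- that $S_x\in\LC(X)$ with the stated complements, that $\overline{\{y\}}$ is clopen in $\overline\partial\{x\}$ and open in $\overline{\{x\}}$ whenever $x\to y$, that $\widetilde{\{z\}}$ is clopen in $\ob\{x\}$ and open in $\widetilde{\{x\}}$ whenever $z\to x$, and that $(\widetilde{\{x\}},\overline\partial\{x\})$ and $(\ob\{x\},\overline{\{x\}})$ are boundary pairs. Each of these reduces to the order-convexity (``no diamond'') condition in Lemma~\ref{upp}, so I expect no conceptual obstacle once the set $S_x$ and the use of \ref{it:rdi} have been spotted; everything else is mechanical rewriting with the relations of Proposition~\ref{pro:relations}.
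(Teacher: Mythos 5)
Your proof is correct and follows essentially the same route as the paper: both reduce the claim, via the disjoint decompositions $\overline{\partial}\{x\}=\bigsqcup_{x\to y}\overline{\{y\}}$ and $\widetilde{\partial}\{x\}=\bigsqcup_{z\to x}\widetilde{\{z\}}$ from Lemma~\ref{upp}, to the single identity obtained from Proposition~\ref{pro:relations}\ref{it:rdi} with $Y=\overline{\{x\}}$, $Z=\widetilde{\{x\}}$ and $W=\overline{\{x\}}\cup\widetilde{\{x\}}$. The only difference is that you spell out the mechanical expansion of the sums via \ref{it:biprodcut}, \ref{it:ii}, \ref{it:rr}, \ref{it:id}, \ref{it:dr}, which the paper leaves implicit.
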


\begin{proof}
Since $X$ is a unique path space, the collections $\Big(\overline{\{y\}}\Big)_{x\to y}$ and $\Big(\widetilde{\{z\}}\Big)_{z\to x}$ are disjoint, respectively. Hence the desired relation simplifies to
\begin{equation*}
r_{\overline{\{x\}}}^{\overline{\partial}\{x\}} \delta_{\overline{\partial}\{x\}}^{\widetilde{\{x\}}} =
\delta_{\overline{\{x\}}}^{\widetilde{\partial}\{x\}} i_{\widetilde{\partial}\{x\}}^{\widetilde{\{x\}}},
\end{equation*}
which follows from Proposition \ref{pro:relations}\ref{it:rdi} 
by setting $Y = \overline{\{x\}}$, $Z=\widetilde{\{x\}}$ 
and $W = \overline{\{x\}} \cup \widetilde{\{x\}}$.
\end{proof}
\begin{definition}
The previous lemma allows us to define an additive functor $\Cattakeshi\to\Catsixterm$ by $\tcsi x\mapsto (\overline{\{x\}},1)$ and $\gosi x\mapsto (\widetilde{\{x\}},0)$, and in the obvious way on morphisms.
Let \[\Ftakeshi\colon\Mod(\Catsixterm)\to\Mod(\Cattakeshi)\] denote the induced functor.
Define \emph{filtered $\K$\nb-the\-ory restricted to the canonical base}, 
$\FKtakeshi\colon\kk(X)\to\Mod(\Cattakeshi)$, 
as the composition of $\FKsixterm$ with $\Ftakeshi$.
\end{definition}
\begin{remark}
The invariant $\FKtakeshi$ is only defined for unique path spaces because the boundary map $\delta_{\csi y}^{\osi x}$ only exists when $\csi y\cup\osi x$ belongs to $\LC(X)$. 
We also point out that the invariant $\FKtakeshi$ can only be expected to be very useful for spaces such that the relation \eqref{eq:cond_of_delta} holds for all boundary pairs $(U,C)$.
\end{remark}

\begin{definition}
A $\Cattakeshi$-module $M$ is called \emph{exact} if the sequence
\begin{equation}
M(\tcsi x) \xrightarrow{\begin{pmatrix} r_{\tcsi x}^{\tcsi y} & -\delta_{\tcsi x}^{\gosi z} \end{pmatrix}} \bigoplus_{x\to y} M(\tcsi y) \oplus \bigoplus_{z\to x} M(\gosi z) \xrightarrow{\begin{pmatrix} \delta_{\tcsi y}^{\gosi x} \\ i_{\gosi z}^{\gosi x} \end{pmatrix}} M(\gosi x)
\end{equation}
is exact for all $x\in X$.
\end{definition}

\begin{lemma}
  \label{lem:takeshi_exact}
If $M$ is an exact $\Catsixterm$-module, then $\Ftakeshi(M)$ is an exact $\Cattakeshi$-module.
In particular, if $A$ is a \csa\ over~$X$, then the $\Cattakeshi$-module $\FKtakeshi(A)$ is exact.
\end{lemma}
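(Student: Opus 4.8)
The plan is, for each point $x\in X$, to exhibit the defining exactness condition of $\Ftakeshi(M)$ at~$x$ as a segment of a Mayer--Vietoris sequence assembled from two six-term exact sequences of the exact $\Catsixterm$-module~$M$.

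First I would bring the sequence into a more tractable shape. Since $X$ is a unique path space, Lemma~\ref{upp} provides the topologically disjoint decompositions $\overline\partial\{x\}=\bigsqcup_{x\to y}\overline{\{y\}}$ and $\widetilde\partial\{x\}=\bigsqcup_{z\to x}\widetilde{\{z\}}$, each summand being clopen in its union. Because $\emptyset$ is a zero object in~$\Catsixterm$ and, by Proposition~\ref{pro:relations}\ref{it:biprodcut} together with \ref{it:identities} and \ref{it:ir}, topologically disjoint unions are biproducts in~$\Catsixterm$, applying~$M$ yields natural isomorphisms $M(\overline\partial\{x\},k)\cong\bigoplus_{x\to y}M(\overline{\{y\}},k)$ and $M(\widetilde\partial\{x\},k)\cong\bigoplus_{z\to x}M(\widetilde{\{z\}},k)$ for $k=0,1$. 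Under these isomorphisms, relations \ref{it:rr} and \ref{it:ii} identify the families $\bigl(r_{\tcsi x}^{\tcsi y}\bigr)_y$ and $\bigl(i_{\gosi z}^{\gosi x}\bigr)_z$ with $r_{\overline{\{x\}}}^{\overline\partial\{x\}}$ and $i_{\widetilde\partial\{x\}}^{\widetilde{\{x\}}}$, and relations \ref{it:dr} and \ref{it:id} identify $\bigl(\delta_{\tcsi x}^{\gosi z}\bigr)_z$ and $\bigl(\delta_{\tcsi y}^{\gosi x}\bigr)_y$ with $\delta_{\overline{\{x\}}}^{\widetilde\partial\{x\}}$ and $\delta_{\overline\partial\{x\}}^{\widetilde{\{x\}}}$, where $(\widetilde\partial\{x\},\overline{\{x\}})$ and $(\widetilde{\{x\}},\overline\partial\{x\})$ are boundary pairs by Lemma~\ref{upp}. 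Hence exactness of $\Ftakeshi(M)$ at~$x$ is equivalent to exactness in the middle of
\[ M(\overline{\{x\}},1)\xrightarrow{\begin{pmatrix} r & -\delta \end{pmatrix}} M(\overline\partial\{x\},1)\oplus M(\widetilde\partial\{x\},0)\xrightarrow{\begin{pmatrix} \delta \\ i \end{pmatrix}} M(\widetilde{\{x\}},0). \]

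Next I would set $W=\overline{\{x\}}\cup\widetilde{\{x\}}$, which lies in $\LC(X)$ by Lemma~\ref{upp}, and observe that $\widetilde\partial\{x\}=W\setminus\overline{\{x\}}$ and $\widetilde{\{x\}}=W\setminus\overline\partial\{x\}$ are open in~$W$. Applying the exactness of~$M$ to the boundary pairs $(\widetilde\partial\{x\},\overline{\{x\}})$ and $(\widetilde{\{x\}},\overline\partial\{x\})$ in~$W$ gives two six-term exact sequences, and the identity on the $M(W,\ast)$-terms together with $r_{\overline{\{x\}}}^{\overline\partial\{x\}}$ and $i_{\widetilde\partial\{x\}}^{\widetilde{\{x\}}}$ constitute a morphism from the first to the second; the three squares commute by relations \ref{it:rr}, \ref{it:rdi} and \ref{it:ii}. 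Since this morphism is an isomorphism on the $M(W,\ast)$-terms, the Barratt--Whitehead construction---the Mayer--Vietoris sequence of a morphism of long exact sequences that is an isomorphism on one of the three families of terms---yields a long exact sequence, one of whose segments is, after negating the $M(\widetilde\partial\{x\},0)$-coordinate of the middle term, exactly the three-term sequence displayed above; exactness in the middle is what was wanted.

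For the final claim, $\FK(A)$ is an exact $\NT$-module by Remark~\ref{rem:K0lift}; since exactness is expressed through the generators $i$, $r$, $\delta$, which the canonical functor $\Catsixterm\to\NT$ carries to the like-named elements, $\FKsixterm(A)=\Fsixterm\FK(A)$ is an exact $\Catsixterm$-module, whence $\FKtakeshi(A)=\Ftakeshi\FKsixterm(A)$ is an exact $\Cattakeshi$-module by the above. I expect the main obstacle to be entirely bookkeeping: confirming the topological disjointness and the boundary-pair assertions of the first step, and matching the matrix entries against the components of $r$, $\delta$ and $i$ with the correct signs; once the sequence is in the displayed form, the Mayer--Vietoris argument is purely formal.
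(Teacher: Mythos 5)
Your proposal is correct and follows essentially the same route as the paper: reduce to the three-term sequence using the disjoint decompositions $\overline{\partial}\{x\}=\bigsqcup_{x\to y}\overline{\{y\}}$ and $\widetilde{\partial}\{x\}=\bigsqcup_{z\to x}\widetilde{\{z\}}$, then extract exactness from a commutative ladder of two six-term exact sequences of $M$ whose outer vertical maps are isomorphisms. The only difference is cosmetic: the paper's ladder has $M(\{x\},0)$ and $M(\{x\},1)$ as the terms carrying identities (coming from the boundary pairs $(\{x\},\overline{\partial}\{x\})$ and $(\widetilde{\partial}\{x\},\{x\})$), whereas yours has $M(W,0)$ and $M(W,1)$ for $W=\overline{\{x\}}\cup\widetilde{\{x\}}$, and you name the diagram chase as the Barratt--Whitehead construction.
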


\begin{proof}
Using again that the collections $\left(\overline{\{y\}}\right)_{x\to y}$ and $\left(\widetilde{\{z\}}\right)_{z\to x}$ are respectively disjoint, it suffices to prove exactness of the sequence
\begin{equation*}
M(\overline{\{x\}},1) \xrightarrow{\begin{pmatrix} r_{\overline{\{x\}}}^{\overline{\partial}\{x\}} & -\delta_{\overline{\{x\}}}^{\widetilde{\partial}\{x\}} \end{pmatrix}}
M(\overline{\partial}\{x\},1) \oplus M(\widetilde{\partial}\{x\},0) \xrightarrow{\begin{pmatrix} \delta_{\overline{\partial}\{x\}}^{\widetilde{\{x\}}} \\ i_{\widetilde{\partial}\{x\}}^{\widetilde{\{x\}}} \end{pmatrix}} M(\widetilde{\{x\}},0),
\end{equation*}
which follows from a diagram chase through the commutative diagram
\[
\xymatrix{
M(\{x\},1)
\ar[r]\ar@{=}[d]
&M(\overline{\{x\}},1)
\ar[r]
\ar[d]|\circ
& M(\overline{\partial}\{x\},1)
\ar[r]|\circ\ar[d]|\circ
&M(\{x\},0)\ar@{=}[d]
\\
M(\{x\},1)\ar[r]|\circ
&M(\widetilde{\partial}\{x\},0)
\ar[r]
& M(\widetilde{\{x\}},0)\ar[r]
&M(\{x\},0)
}
\]
whose rows are exact.
\end{proof}

\begin{definition}
Let $X$ be a finite $T_0$-space. A boundary pair $(U,C)$ in~$X$ is called
\emph{elementary} if $U$ and $C$ are connected, 
$U$ is open, $C$ is closed
and if, moreover, $U\subset\widetilde C$ and $C\subset\overline U$.
\end{definition}

\begin{definition}
  \label{def:EBP}
A unique path space $X$ is called an \emph{EBP space} if every elementary boundary pair $(U,C)$ in $X$ is of the form $(\widetilde{\{x\}},\overline{\{y\}})$ for two points~$x$ and~$y$ in~$X$ with $x\to y$.
\end{definition}

\begin{lemma} \label{toplemma}
Let $X$ be an EBP space, and let $(U,C)$ be a boundary pair in $X$.  Then the following relation holds in the category $\Catsixterm_*$:
\begin{equation}
  \label{eq:cond_of_delta}
  \delta_C^U=
  \sum_{x\to y, x\in U, y\in C} r_C^{\overline{\{y\}}\cap C}\ i_{\overline{\{y\}}\cap C}^{\overline{\{y\}}}\ \delta_{\overline{\{y\}}}^{\widetilde{\{x\}}}\ r_{\widetilde{\{x\}}}^{\widetilde{\{x\}}\cap U}\ i_{\widetilde{\{x\}}\cap U}^U .
\end{equation}
\end{lemma}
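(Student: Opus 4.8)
The plan is to reduce the general boundary pair $(U,C)$ to a sum over elementary boundary pairs, and then invoke the EBP hypothesis to identify each elementary piece with a canonical one of the form $(\widetilde{\{x\}},\overline{\{y\}})$. Concretely, I would first decompose $U$ into its connected components $U=\bigsqcup_i U_i$ and $C$ into its connected components $C=\bigsqcup_j C_j$; since these components are topologically disjoint, Proposition~\ref{pro:relations}\ref{it:biprodcut} and \ref{it:identities} give orthogonal decompositions $\id_U=\sum_i i_{U_i}^U r_U^{U_i}$ (viewing the $U_i$ as open in $U$, so that $r_U^{U_i}$ makes sense after identifying the relevant subquotients) and similarly for $C$. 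Inserting these into $\delta_C^U$ and using the naturality relations \ref{it:id} and \ref{it:dr} to move the $i$'s and $r$'s past $\delta$, one obtains $\delta_C^U=\sum_{i,j} i_{C_j}^C\,\delta_{C_j}^{U_i}\,r_U^{U_i}$, reducing to the case where both $U$ and $C$ are connected. In this connected case one further restricts along the inclusions $U\subset\widetilde C$ and $C\subset\overline U$—more precisely, replacing $U$ by $\widetilde{C}\cap U$ (open in $U$) and $C$ by $\overline{U}\cap C$ (closed in $C$)—to arrive at an elementary boundary pair, again tracking the $i$'s and $r$'s using \ref{it:id}, \ref{it:dr}, and \ref{it:ir}.

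Once the pair is elementary, the EBP hypothesis forces $(U,C)=(\widetilde{\{x\}},\overline{\{y\}})$ for a unique pair $x\to y$, and the corresponding $\delta$ is precisely the generator $\delta_{\overline{\{y\}}}^{\widetilde{\{x\}}}$ appearing on the right-hand side of \eqref{eq:cond_of_delta}. Assembling the contributions, each connected component $U_i$ of $\widetilde{C}\cap U$ paired with a connected component $C_j$ of $\overline{U}\cap C$ contributes exactly one term indexed by the pair $x\to y$ with $\widetilde{\{x\}}=U_i$ and $\overline{\{y\}}=C_j$; one must check that this indexing matches the index set $\{x\to y : x\in U,\ y\in C\}$ in the statement, i.e.\ that a pair $x\to y$ with $x\in U$, $y\in C$ contributes a nonzero term precisely when $\widetilde{\{x\}}\subset\widetilde C$ and $\overline{\{y\}}\subset\overline U$ and these produce an elementary subpair—and that different such pairs give different terms. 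Here the unique path property is essential: it guarantees (via Lemma~\ref{upp}) that the decompositions $\widetilde\partial\{x\}=\bigsqcup_{y\to x}\widetilde{\{y\}}$ and dually for $\overline\partial$, and more generally the relevant intersections of canonical opens and closeds, behave well enough that no term is counted twice and the connected components are exactly the canonical sets $\widetilde{\{x\}}$, $\overline{\{y\}}$.

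The composite $r_C^{\overline{\{y\}}\cap C}\,i_{\overline{\{y\}}\cap C}^{\overline{\{y\}}}\,\delta_{\overline{\{y\}}}^{\widetilde{\{x\}}}\,r_{\widetilde{\{x\}}}^{\widetilde{\{x\}}\cap U}\,i_{\widetilde{\{x\}}\cap U}^U$ in the statement is exactly what the bookkeeping above produces: the innermost $i$ and outermost $r$ restrict $\delta_C^U$ from $U$ to the component $\widetilde{\{x\}}\cap U$ and from $C$ to $\overline{\{y\}}\cap C$, and then $\delta$ is replaced by the canonical generator while the remaining $i$ and $r$ pass back between $\widetilde{\{x\}}\cap U\subset\widetilde{\{x\}}$ and $\overline{\{y\}}\cap C\subset\overline{\{y\}}$. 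I expect the main obstacle to be the combinatorial verification that the index set obtained from the component decomposition coincides with $\{(x,y): x\to y,\ x\in U,\ y\in C\}$ and that no spurious or repeated terms arise; this is where one must carefully exploit that $X$ is an EBP space (so every elementary pair is canonical) together with the unique path property (so the relevant component decompositions are the canonical ones). The purely categorical manipulations—commuting $i$, $r$ past $\delta$ and composing—are routine consequences of Proposition~\ref{pro:relations}, in particular \ref{it:id}, \ref{it:dr}, \ref{it:ir}, and \ref{it:biprodcut}.
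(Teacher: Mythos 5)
Your overall strategy coincides with the paper's: peel a general boundary pair down to elementary ones using the biproduct relation \ref{it:biprodcut} of Proposition~\ref{pro:relations} for disconnected pieces and the restriction relations \ref{it:ii}, \ref{it:id}, \ref{it:dr}, \ref{it:rdi}, then invoke the EBP hypothesis to identify the surviving elementary pair with $(\widetilde{\{x\}},\overline{\{y\}})$ and the unique path property to see that exactly one index $(x,y)$ contributes. The idea is the right one.

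The gap is in the claim that one round of ``split into connected components, then intersect $U$ with $\widetilde C$ and $C$ with $\overline U$'' lands on elementary pairs. It does not: intersecting a connected $U$ with $\widetilde C$ can disconnect it again, and shrinking $C$ shrinks $\widetilde C$, so a containment $U\subset\widetilde C$ achieved at one stage can be destroyed at the next; the four reduction operations interfere and must be iterated, and your sketch supplies no termination argument. The paper resolves this by inducting on $|U\cup C|$: each reduction (splitting a disconnected $U$ or $C$, or replacing $U$ by the \emph{proper} subset $U\cap\widetilde C$, or $C$ by $C\cap\overline U$) strictly decreases $|U\cup C|$, and the induction hypothesis is applied to the smaller pairs instead of trying to reach elementarity in a bounded number of explicit steps. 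Two further points you leave implicit also need care. First, the terminal case of the reduction is a connected pair $(U,C)$ with $U\subset\widetilde C$ and $C\subset\overline U$, which need not itself be elementary, since $U$ need not be open in $X$ nor $C$ closed; one must pass to $(\widetilde U,\overline C)$ (a boundary pair by Lemma~\ref{upp}), check that \emph{this} pair is elementary, and then recover $\delta_C^U=i_C^{\overline{\{y\}}}\,\delta_{\overline{\{y\}}}^{\widetilde{\{x\}}}\,r_{\widetilde{\{x\}}}^U$ via \ref{it:id} and \ref{it:dr}. Second, the verification that $(x,y)$ is the \emph{only} pair with $x\to y$, $x\in U$, $y\in C$ --- which you correctly flag as the main combinatorial obstacle but do not carry out --- is exactly where the unique path property enters: any $u\to c$ with $u\in U$, $c\in C$ extends to a path from $u$ to $c$ through the arrow $x\to y$, and uniqueness of paths forces $u=x$, $c=y$. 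None of this changes your approach; it needs the inductive scaffolding and these two checks to become a proof.
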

\begin{proof}
We would like to show the relation \eqref{eq:cond_of_delta}
for a boundary pair $(U,C)$ in $X$.
The proof goes by the induction on the number 
$|U \cup C|$ 
of elements of $U \cup C$. 
If either $U$ or $C$ is empty, then both sides of 
\eqref{eq:cond_of_delta} are $0$. 
This takes care of the case $|U \cup C| = 0$. 
Suppose for a natural number $n$, 
we have shown \eqref{eq:cond_of_delta} 
for all boundary pairs $(U,C)$ with $|U \cup C| \leq n$, 
and take a boundary pair $(U,C)$ with $|U \cup C| = n+1$, arbitrarily. 
We are going to show \eqref{eq:cond_of_delta} for this pair. 
If either $U$ or $C$ is empty, again both sides of \eqref{eq:cond_of_delta} are zero.
So we may assume that both $U$ and $C$ are non-empty. 
Suppose $U$ is not connected, 
and choose two non-empty open and closed subsets $U_1$ and $U_2$ 
of $U$ such that $U = U_1 \sqcup U_2$. 
Then for $i=1,2$, $(U_i,C)$ is a boundary pair 
with $|U_i \cup C| \leq n$. 
Thus by the assumption of the induction, 
both $(U_1,C)$ and $(U_2,C)$ satisfy \eqref{eq:cond_of_delta}. 
Hence by \ref{it:biprodcut}, \ref{it:dr} and \ref{it:ii} of Proposition~\ref{pro:relations} 
we have 
\begin{align*}
\delta_C^U
& = \delta_C^U (r_{U}^{U_1}i_{U_1}^{U} + r_{U}^{U_2}i_{U_2}^{U}) \\
& = \delta_C^{U_1}i_{U_1}^{U} + \delta_C^{U_2}i_{U_2}^{U} \\
& = \Big(\sum_{x\to y, x\in U_1, y\in C} r_C^{\overline{\{y\}}\cap C}\
i_{\overline{\{y\}}\cap C}^{\overline{\{y\}}}\
\delta_{\overline{\{y\}}}^{\widetilde{\{x\}}}\
r_{\widetilde{\{x\}}}^{\widetilde{\{x\}}\cap U_1}\ i_{\widetilde{\{x\}}\cap
U_1}^{U_1}\Big)i_{U_1}^{U} \\
&\phantom{==} 
+ \Big(\sum_{x\to y, x\in U_2, y\in C} r_C^{\overline{\{y\}}\cap C}\
i_{\overline{\{y\}}\cap C}^{\overline{\{y\}}}\
\delta_{\overline{\{y\}}}^{\widetilde{\{x\}}}\
r_{\widetilde{\{x\}}}^{\widetilde{\{x\}}\cap U_2}\ i_{\widetilde{\{x\}}\cap
U_2}^{U_2}\Big)i_{U_2}^{U} \\
& = \sum_{x\to y, x\in U, y\in C} r_C^{\overline{\{y\}}\cap C}\
i_{\overline{\{y\}}\cap C}^{\overline{\{y\}}}\
\delta_{\overline{\{y\}}}^{\widetilde{\{x\}}}\
r_{\widetilde{\{x\}}}^{\widetilde{\{x\}}\cap U}\ i_{\widetilde{\{x\}}\cap
U}^U 
\end{align*}
since we for $x \in U_i$ 
 have $\widetilde{\{x\}}\cap U_1 = \widetilde{\{x\}}\cap U$ 
because $U_i \subset U$ is open. 
This shows \eqref{eq:cond_of_delta} for $(U,C)$. 
Thus we may now assume $U$ is connected. 
In a very similar way, 
we get \eqref{eq:cond_of_delta} 
using the assumption of the induction if $C$ is not connected. 
Thus we may assume $C$ is connected. 
Next suppose we have $U\not\subset\widetilde C$. 
Set $U' = U \cap \widetilde C$ which is a proper open subset of $U$. 
The pair $(U',C)$ is a boundary pair 
because $U' \cup C = (U \cup C) \cap \widetilde C \in \LC(X)$. 
We have 
$\delta_C^U = \delta_{C}^{U'} i_{U'}^{U}$
by applying \ref{it:rdi} of Proposition~\ref{pro:relations} 
for $Y=C$, $Z=U$ and $W= U' \cup C$. 
Since $|U' \cup C| \leq n$, we get by the assumption of the induction  that
\begin{align*}
\delta_C^U
&= \delta_{C}^{U'} i_{U'}^{U} \\
&= \Big(\sum_{x\to y, x\in U', y\in C} r_C^{\overline{\{y\}}\cap C}\
i_{\overline{\{y\}}\cap C}^{\overline{\{y\}}}\
\delta_{\overline{\{y\}}}^{\widetilde{\{x\}}}\
r_{\widetilde{\{x\}}}^{\widetilde{\{x\}}\cap U'}\ i_{\widetilde{\{x\}}\cap
U'}^{U'}\Big) i_{U'}^{U} \\
&= \sum_{x\to y, x\in U, y\in C} r_C^{\overline{\{y\}}\cap C}\
i_{\overline{\{y\}}\cap C}^{\overline{\{y\}}}\
\delta_{\overline{\{y\}}}^{\widetilde{\{x\}}}\
r_{\widetilde{\{x\}}}^{\widetilde{\{x\}}\cap U}\ i_{\widetilde{\{x\}}\cap
U}^{U}
\end{align*}
since $x\to y$, $x\in U$ and $y\in C$ 
imply $x \in U'$, 
and we have $\widetilde{\{x\}}\cap U = \widetilde{\{x\}}\cap U'$. 
This shows \eqref{eq:cond_of_delta} for $(U,C)$. 
Thus we may now assume $U\subset\widetilde C$. 
In a very similar way, 
we get \eqref{eq:cond_of_delta} 
using the assumption of the induction if $C\not\subset\overline U$. 
Thus we may assume $C\subset\overline U$.

It remains to show \eqref{eq:cond_of_delta} 
for a boundary pair $(U,C)$ such that $U$ and $C$ are connected, 
$U\subset\widetilde C$ and $C\subset\overline U$.
To this end, we use the assumption of the lemma. 
Take such a pair $(U,C)$. 
Since $X$ is a unique path space, 
the pair $(\widetilde U, \overline C)$ is a boundary pair 
by Lemma~\ref{upp}. 
It is not difficult to see 
that the pair $(\widetilde U, \overline C)$ is elementary. 
Hence by the assumption of the lemma, 
there exist $x \in \widetilde U$ and $y \in \overline C$ 
such that $\widetilde U = \widetilde{\{x\}}$, 
$\overline C = \overline{\{y\}}$ and $x\to y$.
By \ref{it:id} and \ref{it:dr} of Proposition~\ref{pro:relations}, 
we get 
\begin{align*}
\delta_C^U
&= i_{C}^{\overline C} \delta_{\overline C}^{\widetilde U} r_{\widetilde U}^{U}
= i_{C}^{\overline{\{y\}}} \delta_{\overline{\{y\}}}^{\widetilde{\{x\}}} 
r_{\widetilde{\{x\}}}^{U}. 
\end{align*}
It remains to prove that $(x,y)$ 
is the only pair satisfying $x\to y$, $x\in U$ and $y\in C$. 
First note that $\widetilde U = \widetilde{\{x\}}$ 
implies $x\in U$, and also that 
$\overline C = \overline{\{y\}}$ implies $y\in C$. 
Now take $u \in U$ and $c \in  C$ with $u \to c$. 
Since $U \subset \widetilde{\{x\}}$ and $C \subset \overline{\{y\}}$, 
there exist a path from $u$ to $x$, 
and a path from $y$ to $c$. 
These two paths together with the arrow $x \to y$ 
give us a path from $u$ to $c$. 
Since $X$ is a unique path space, this path should coincide 
with the arrow $u \to c$. 
Hence we get $u=x$ and $c=y$. 
This finishes the proof. 
\end{proof}

\begin{lemma}
Let $X$ be a finite $T_0$-space. 
Assume that the directed graph associated to $X$ is a \emph{forrest}, 
that is, it contains no undirected cycles. Then $X$ is an EBP space.
\end{lemma}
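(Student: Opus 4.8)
The plan is to verify the two defining properties of an EBP space in turn: that $X$ is a unique path space, and that every elementary boundary pair in $X$ is of the form $(\widetilde{\{x\}},\overline{\{y\}})$ with $x\to y$. For the first, I would note that a path from $y$ to $x$ in the sense of Section~\ref{sec:notation} is precisely a directed path in the Hasse diagram of $X$, so two distinct such paths would together contain an undirected cycle; since the directed graph of $X$ is a forest there is at most one path from $y$ to $x$ for all $x,y$, and hence $X$ is a unique path space by Lemma~\ref{upp}.

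For the second property, let $(U,C)$ be an elementary boundary pair and set $Y=U\cup C$; since $(U,C)$ is a boundary pair, $Y\in\LC(X)$. The first thing I would record is that $Y$ is \emph{order-convex}: writing $Y$ as an intersection of an open set and a closed set, that is, of an up-set and a down-set for the specialization order, any point lying between two points of $Y$ again lies in $Y$. Convexity has two consequences I would use: the Hasse diagram of the subposet $Y$ is the full (induced) subgraph of that of $X$ on the vertex set $Y$, and likewise for the convex subsets $U$ and $C$. Using $C\subseteq\overline U$ one sees that every point of $C$ lies below some point of $U$, and convexity drags the whole covering chain between them into $Y$; together with connectedness of $U$ this makes $Y$ connected. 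Hence the Hasse diagram of $Y$ is a connected subgraph of a forest, that is, a tree, and the Hasse diagrams of $U$ and of $C$ are subtrees of it. Counting edges in a tree ($|Y|-1$ in total, of which $|U|-1$ lie inside $U$ and $|C|-1$ inside $C$), exactly one edge joins a point $x\in U$ to a point $y\in C$; since $C$ is a down-set in $Y$ this edge must run downwards from $x$ to $y$, and convexity of $Y$ promotes ``$x$ covers $y$ in $Y$'' to $x\to y$ in $X$.

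It then remains to identify $U$ with $\widetilde{\{x\}}$ and $C$ with $\overline{\{y\}}$. Since $U$ is an up-set of $X$ containing $x$ the inclusion $\widetilde{\{x\}}\subseteq U$ is automatic, and dually $\overline{\{y\}}\subseteq C$. For the reverse inclusions I would show that $x$ is the \emph{unique} minimal element of $U$ (and dually $y$ the unique maximal element of $C$): if $u$ is any minimal element of $U$, then $U\subseteq\widetilde C$ supplies a point $c\in C$ with $c<u$, a maximal chain in the convex interval $[c,u]\subseteq Y$ passes from $C$ into $U$ exactly once, and minimality of $u$ forces that crossing to occur at $u$ itself, so $u$ is the upper endpoint of an edge between $U$ and $C$ in the Hasse diagram of $Y$ and hence $u=x$ by uniqueness of that edge. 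Thus every point of $U$ lies above $x$, giving $U=\widetilde{\{x\}}$, and dually $C=\overline{\{y\}}$, which completes the argument.

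I expect the main work to lie in the bookkeeping of the second step---carefully relating topological connectedness of $U$, $C$, and $Y$ to connectedness of the associated Hasse diagrams, and pinning down the unique edge between $U$ and $C$---whereas the rest is routine once that picture is in place. A minor degenerate case, namely when $U$ or $C$ is empty (in which case both are, since $C\subseteq\overline U$), should be dispatched at the outset; it does not arise if connected spaces are taken to be non-empty.
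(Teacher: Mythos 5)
Your argument is correct, and it reaches the key point by a genuinely different route than the paper. The paper also produces the covering edge from a minimal element $x\in U$ (using $U\subseteq\widetilde C$ and local closedness of $U\cup C$ to get $y\in C$ with $x\to y$) and records the automatic inclusions $\widetilde{\{x\}}\subseteq U$, $\overline{\{y\}}\subseteq C$; but for the reverse inclusions it argues point by point: given $u\in U$ it picks $c\in C$ with $c<u$, concatenates an undirected path from $u$ to $x$ inside $U$, the edge $x\to y$, and an undirected path from $y$ to $c$ inside $C$, and invokes the absence of undirected cycles to identify this walk with the directed path from $u$ to $c$, whence $u\in\widetilde{\{x\}}$. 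You instead extract a global structural fact first: $Y=U\cup C$ is order-convex and connected, so its Hasse diagram is a tree, and the edge count $|Y|-1=(|U|-1)+(|C|-1)+e$ forces exactly one cross edge $x\to y$; the identifications $U=\widetilde{\{x\}}$ and $C=\overline{\{y\}}$ then reduce to the order-theoretic observation that every minimal element of $U$ (resp.\ every maximal element of $C$) must be an endpoint of some cross edge. Your version isolates a cleaner intermediate statement (uniqueness of the cross edge) and replaces the paper's slightly informal path-comparison step by bookkeeping, at the cost of first verifying connectedness of $Y$ and the dictionary between convex subsets and induced subgraphs of the Hasse diagram. Your explicit treatment of the degenerate case $U=C=\emptyset$ is also a point the paper passes over silently; it is handled correctly by taking connected sets to be non-empty.
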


\begin{proof}
It is clear that, if the directed graph associated to $X$ is a forrest, then $X$ is a unique path space. 
Let us take an elementary boundary pair $(U,C)$. 
Choose a minimal element $x\in U$. 
Since $U\subset \widetilde C$, there is $y\in C$ with $x > y$. 
We can, moreover, assume that $x\to y$ 
because $U\cup C$ is locally closed and $x$ is minimal in $U$. 
Since $U$ is open and $C$ is closed, 
we have $\widetilde{\{x\}}\subset U$ and $\overline{\{y\}}\subset C$. 
We will show that these inclusions are equalities 
using the fact that $X$ is a forrest. 
Take $u\in U$ arbitrarily. 
Since $U\subset \widetilde C$, 
there exists an element $c \in C$ such that $u > c$. 
Thus we have a path from $u$ to $c$. 
Since both $U$ and $C$ are connected, 
there exist undirected paths from $u$ to $x$ 
and from $y$ to $c$. 
These two paths give us an undirected path 
from $u$ to $c$ through the arrow $x \to y$. 
This path should coincide with 
the directed path from $u$ to $c$ 
because $X$ contains no undirected cycles. 
Hence we get a path from $u$ to $x$. 
This shows $u \in \widetilde{\{x\}}$, 
and therefore we get $U = \widetilde{\{x\}}$. 
In a similar manner, we get $C = \overline{\{y\}}$. 
\end{proof}

\begin{remark} \label{rem:Q}
The above lemma applies, in particular, to accordion spaces. The conclusion of Lemma~\ref{toplemma} can also be verified for various unique path spaces which are not forrests---the smallest example being the so-called pseudocircle with four points. Consider, however, the sixteen-point space $Q$ defined by the directed graph
\begin{equation*}
\begin{split}
\xy
(15,0)*+{y_1}="E1"; (0,15)*+{y_3}="E2";
(-15,0)*+{y_5}="E3"; (0,-15)*+{y_7}="E4";
(10.6066,10.6066)*+{y_2}="F1"; (-10.6066,10.6066)*+{y_4}="F2";
(-10.6066,-10.6066)*+{y_6}="F3"; (10.6066,-10.6066)*+{y_8}="F4";
(33,0)*+{x_1}="V1"; (0,33)*+{x_3}="V2";
(-33,0)*+{x_5}="V3"; (0,-33)*+{x_7}="V4";
(23.33452,23.33452)*+{x_2}="E5"; (-23.33452,23.33452)*+{x_4}="E6";
(-23.33452,-23.33452)*+{x_6}="E7"; (23.33452,-23.33452)*+{x_8}="E8";
{\ar@/^0.2pc/ "F1"; "E1"}; {\ar@/_0.2pc/ "F1"; "E2"};
{\ar@/^0.2pc/ "F2"; "E2"}; {\ar@/_0.2pc/ "F2"; "E3"};
{\ar@/^0.2pc/ "F3"; "E3"}; {\ar@/_0.2pc/ "F3"; "E4"};
{\ar@/^0.2pc/ "F4"; "E4"}; {\ar@/_0.2pc/ "F4"; "E1"};
{\ar@/^/ "E5"; "V1"}; 
{\ar@/^/ "E6"; "V2"}; 
{\ar@/^/ "E7"; "V3"};
{\ar@/^/ "E8"; "V4"}; 
{\ar@/_/ "E5"; "V2"};
{\ar@/_/ "E6"; "V3"};
{\ar@/_/ "E7"; "V4"};
{\ar@/_/ "E8"; "V1"};
{\ar@/^2pc/ |(.3)\hole |(.85)\hole "V3"; "E2"};
{\ar@/^2pc/ |(.3)\hole |(.85)\hole "V2"; "E1"};
{\ar@/^2pc/ |(.3)\hole |(.85)\hole "V1"; "E4"};
{\ar@/^2pc/ |(.3)\hole |(.85)\hole "V4"; "E3"};
{\ar@/_2pc/ "E5"; "F2"};
{\ar@/_2pc/ "E6"; "F3"};
{\ar@/_2pc/ "E7"; "F4"};
{\ar@/_2pc/ "E8"; "F1"};
\endxy
\end{split} .
\end{equation*}
Then $Q$ is a unique path space that is not an EBP space because the subsets $U=\{x_1,x_2,\ldots,x_8\}$ and $C=\{y_1,y_2,\ldots,y_8\}$ give an elementary boundary pair $(U,C)$ that does not satisfy $U=\osi x$ nor $C=\csi y$ for any $x,y\in X$.
A simple computation shows that the boundary decomposition \eqref{eq:cond_of_delta} of $\delta_C^U$ indeed holds in the category $\NT_*$. However, we believe that it does not hold in the category $\Catsixterm_*$.
\end{remark}

The following theorem has two important consequences.
Firstly, as stated in Corollary~\ref{cor:sixterm_to_takeshi}, it implies that for real-rank-zero \csa s, isomorphisms on~$\FKtakeshi$ lift to isomorphisms on~$\FKsixterm$.
By Theorem~\ref{bentmann}, $\FKsixterm$ is strongly complete for stable Kirchberg $X$\nb-al\-ge\-bras when $X$ is an accordion space.
Secondly, by Lemma~5.6 of~\cite{bentmann_koehler}, if $X$ is an accordion space, any exact $\NT$-module is of the form $\FK(A)$ for some Kirchberg $X$\nb-al\-ge\-bra~$A$, so any exact $\Cattakeshi$-module is of the form $\FKtakeshi(A)$ for some Kirchberg $X$\nb-al\-ge\-bra~$A$ of real rank zero.
This second consequence is useful for constructing examples of Kirchberg $X$\nb-al\-ge\-bras.

\begin{theorem}
  \label{thm:sixterm_to_takeshi}
  Let $X$ be an EBP space.
The functor
\[
\Ftakeshi\colon\Mod(\Catsixterm)\to\Mod(\Cattakeshi)
\]
restricts to an equivalence between the category of exact \rrzero{} $\Catsixterm$-modules and the category of exact $\Cattakeshi$-modules.
\end{theorem}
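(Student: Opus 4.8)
The plan is to construct an explicit quasi-inverse functor $G\colon\{\text{exact }\Cattakeshi\text{-modules}\}\to\{\text{exact \rrzero{} }\Catsixterm\text{-modules}\}$ and verify that $\Ftakeshi\circ G$ and $G\circ\Ftakeshi$ are naturally isomorphic to the respective identity functors. First I would note that, by Remark~\ref{rem:K0lift} and the definition of the \rrzero{} condition, an exact \rrzero{} $\Catsixterm$-module $M$ is determined on objects $(Y,j)$ by short exact sequences rather than six-term sequences: for every boundary pair $(U,C)$, the sequence $0\to M(U,j)\xrightarrow{i} M(Y,j)\xrightarrow{r} M(C,j)\to 0$ is exact for $j=0,1$ (the connecting map $\delta$ vanishes in degree $0\to1$, and, using the identity $i_Y^Y=r_Y^Y=\id$ together with exactness, also in degree $1\to0$). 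So I should think of an exact \rrzero{} $\Catsixterm$-module as a compatible family of sheaves and cosheaves on $X$ in each parity, glued by the $\delta$-maps $\delta_C^U\colon M(C,1)\to M(U,0)$ indexed by boundary pairs.

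The construction of $G$ proceeds in stages. Given an exact $\Cattakeshi$-module $N$, I first use the canonical bases: the assignment $x\mapsto N(\gosi x)$ together with the maps $i_{\gosi z}^{\gosi x}$ defines a precosheaf on the basis $\{\widetilde{\{x\}}\mid x\in X\}$, which is automatically a cosheaf by Lemma~\ref{lem:presheaf_on_canonical_base_is_sheaf}, hence extends uniquely to a cosheaf on all of $\Op(X)$ by Lemma~\ref{lem:cosheaf_equivalence}; this produces the degree-$0$ part $M(-,0)$ on open sets, and dually $x\mapsto N(\tcsi x)$ with the $r$-maps gives a sheaf on the closed base, producing the degree-$1$ part $M(-,1)$ on closed sets. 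Extending to all locally closed $Y=U\setminus V$ is forced by the short exact sequences above. The degree-$0$ part on closed sets and degree-$1$ part on open sets, and the maps between parities, are then defined by Lemma~\ref{toplemma}: formula~\eqref{eq:cond_of_delta} tells us exactly what $\delta_C^U$ must be in terms of the generators of $\Cattakeshi$, so I define $M(\delta_C^U)$ by the right-hand side of~\eqref{eq:cond_of_delta} applied to $N$. One then checks that all the relations of Proposition~\ref{pro:relations} are satisfied by these definitions — here the EBP hypothesis is used crucially, since it is precisely what makes~\eqref{eq:cond_of_delta} hold in $\Catsixterm_*$ and not merely in $\NT_*$ (compare Remark~\ref{rem:Q}) — so that $M=G(N)$ is a well-defined $\Catsixterm$-module; it is \rrzero{} by construction and exact because $N$ is exact (using Lemma~\ref{lem:takeshi_exact} in reverse, via the commutative diagram appearing in its proof).

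The verification that $\Ftakeshi\circ G\cong\id$ is essentially immediate: $\Ftakeshi(G(N))$ re-reads off the values $G(N)(\overline{\{x\}},1)=N(\tcsi x)$ and $G(N)(\widetilde{\{x\}},0)=N(\gosi x)$ and the corresponding generators, which is $N$ by construction. For $G\circ\Ftakeshi\cong\id$, given an exact \rrzero{} $\Catsixterm$-module $M$, I must show that $M$ is recovered from its restriction to the canonical base: the cosheaf $\big(M(\widetilde{\{x\}},0)\big)_x$ determines $M(U,0)$ for all open $U$ by Lemma~\ref{lem:cosheaf_equivalence} — this uses that $M(-,0)$ restricted to open sets is a genuine cosheaf, which follows from exactness of $M$ together with a Mayer--Vietoris argument for the cover of $U$ by the sets $\widetilde{\{x\}}$, $x\in U$ — dually for $M(-,1)$ on closed sets, then the short exact sequences fix $M$ on all locally closed sets and both parities, and finally~\eqref{eq:cond_of_delta} shows that the $\delta$-maps of $M$ agree with those reconstructed by $G$. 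The main obstacle I anticipate is exactly this last coherence check: confirming that the cosheaf/sheaf axiom genuinely holds for $M(-,0)$ on open sets and $M(-,1)$ on closed sets — i.e.\ that the six-term exactness of a $\Catsixterm$-module, which only directly refers to boundary pairs $(U,C)$ with $C=Y\setminus U$, propagates to arbitrary covers — and that the relation~\eqref{eq:cond_of_delta} together with Proposition~\ref{pro:relations} really does make all structure maps of $G(N)$ mutually compatible. Much of the bookkeeping is routine diagram chasing once the sheaf-theoretic framework of Section~\ref{sec:sheaves} is in place, so the proof should be organized around Lemmas~\ref{lem:cosheaf_equivalence},~\ref{lem:presheaf_on_canonical_base_is_sheaf} and~\ref{toplemma}.
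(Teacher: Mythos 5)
Your overall strategy coincides with the paper's: build a quasi-inverse $G$ by extending the data on the canonical bases to a cosheaf (degree $0$, open sets) and a sheaf (degree $1$, closed sets) via Lemmas~\ref{lem:presheaf_on_canonical_base_is_sheaf}, \ref{lem:cosheaf_equivalence} and~\ref{lem:sheaf_equivalence}, define $\delta_C^U$ by the right-hand side of~\eqref{eq:cond_of_delta}, verify the relations of Proposition~\ref{pro:relations}, and invoke the EBP hypothesis through Lemma~\ref{toplemma} exactly where you say, namely to see that the reconstructed boundary maps of an exact \rrzero{} $\Catsixterm$-module agree with the original ones. The points you flag as the main obstacles (the cosheaf axiom for $M(-,0)$ on open covers, which is Lemma~\ref{lem:exactseq}, and the coherence of the structure maps of $G(N)$) are indeed where the bulk of the work lies.

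There is, however, one genuine error in your opening reduction. For an exact \rrzero{} $\Catsixterm$-module the \emph{only} connecting map that vanishes is the even-to-odd one $\delta\colon M(C,0)\to M(U,1)$; the odd-to-even map $\delta_C^U\colon M(C,1)\to M(U,0)$ does \emph{not} vanish in general --- if it did, formula~\eqref{eq:cond_of_delta} would be an identity between zero maps and the theorem would have no content, and your own later sentence about the module being ``glued by the $\delta$-maps'' contradicts the claim. No argument from $i_Y^Y=r_Y^Y=\id$ kills it. Consequently the six-term sequence does not split into two short exact sequences in each parity; it collapses to the single exact sequence
\[
0\to M(U,1)\to M(Y,1)\to M(C,1)\xrightarrow{\;\delta\;} M(U,0)\to M(Y,0)\to M(C,0)\to 0 .
\]
This matters for your step ``extending to all locally closed $Y=U\setminus V$ is forced by the short exact sequences'': in degree $0$ one does get $M(Y,0)=\coker\bigl(M(V,0)\to M(U,0)\bigr)$ along a difference of \emph{open} sets, but in degree $1$ the map $M(U,1)\to M(Y,1)$ need not be surjective, so $M(Y,1)$ is not a cokernel of odd groups of open sets; one must instead write $Y$ as a difference of nested \emph{closed} sets and take a kernel, $M(Y,1)=\ker\bigl(M(\overline Y,1)\to M(\overline\partial Y,1)\bigr)$, which is what the paper does. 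With that correction --- and with the exactness of $G(N)$ established by an induction on the size of the boundary pair rather than only by reversing the diagram in Lemma~\ref{lem:takeshi_exact}, which handles only the singleton case --- your proposal matches the paper's proof.
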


A proof of this theorem is given after the following remark and corollary.

\begin{remark}
The proof of Theorem \ref{thm:sixterm_to_takeshi} given below works in fact not only for EBP spaces but more generally for unique path spaces for which the relation \eqref{eq:cond_of_delta} holds in the category $\Catsixterm$ for all boundary pairs $(U,C)$, see~Lemma~\ref{toplemma} and Remark~\ref{rem:Q}.
\end{remark}

\begin{corollary} \label{cor:sixterm_to_takeshi}
Let $A$ and $B$ be \csa s of real rank zero over an EBP space~$X$.  Then for any homomorphism $\phi\colon\FKtakeshi(A)\to\FKtakeshi(B)$, there exists a unique homomorphism $\Phi\colon\FKsixterm(A)\to\FKsixterm(B)$ such that $\Ftakeshi(\Phi)=\phi$.  If $\phi$ is an isomorphism, then so is $\Phi$.
\end{corollary}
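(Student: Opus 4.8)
The plan is to derive this as a formal consequence of Theorem~\ref{thm:sixterm_to_takeshi}. The first step is to verify that $\FKsixterm(A)$ and $\FKsixterm(B)$ belong to the source category of the equivalence in that theorem, i.e.\ that they are exact \rrzero{} $\Catsixterm$-modules. Exactness is recorded in Remark~\ref{rem:K0lift}: it holds for $\FK(A)$, and the exactness condition for $\Catsixterm$-modules is phrased purely in terms of the generators $i$, $r$, $\delta$, which are the images of the corresponding $\NT$-morphisms under the canonical functor $\Catsixterm\to\NT$, so it passes to $\FKsixterm(A)=\Fsixterm\FK(A)$. The \rrzero{} condition is precisely where the real-rank-zero hypotheses on $A$ and $B$ enter: as in Remark~\ref{rem:K0lift}, since every subquotient of a real-rank-zero \csa{} again has real rank zero and hence is $\K_0$-liftable (by~\cite{linrordam}*{Proposition~4}), the relevant exponential maps vanish, so $\FKsixterm(A)$ and $\FKsixterm(B)$ are \rrzero{}.

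Next, by Lemma~\ref{lem:takeshi_exact}, $\FKtakeshi(A)=\Ftakeshi\FKsixterm(A)$ and $\FKtakeshi(B)=\Ftakeshi\FKsixterm(B)$ are exact $\Cattakeshi$-modules, so they lie in the target category of the equivalence. Theorem~\ref{thm:sixterm_to_takeshi} then tells us that $\Ftakeshi$ restricts to an equivalence between the (full) subcategories of exact \rrzero{} $\Catsixterm$-modules and of exact $\Cattakeshi$-modules; in particular it is fully faithful there, so
\[ \Ftakeshi\colon\Hom\bigl(\FKsixterm(A),\FKsixterm(B)\bigr)\to\Hom\bigl(\FKtakeshi(A),\FKtakeshi(B)\bigr) \]
is a bijection. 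This produces a unique $\Phi\colon\FKsixterm(A)\to\FKsixterm(B)$ with $\Ftakeshi(\Phi)=\phi$. For the last assertion, one uses that a fully faithful functor reflects isomorphisms: if $\phi=\Ftakeshi(\Phi)$ is an isomorphism, then so is $\Phi$.

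There is no genuine obstacle here: the content is entirely in Theorem~\ref{thm:sixterm_to_takeshi} (and, behind it, Lemma~\ref{toplemma}). The only point that needs a little attention is the first step, where one must check that $\FKsixterm(A)$ and $\FKsixterm(B)$ land in the smaller category of exact \rrzero{} modules and not merely in the exact ones; this is exactly the role played by the real-rank-zero assumption, via $\K_0$-liftability.
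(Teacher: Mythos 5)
Your proposal is correct and follows the route the paper intends: Corollary~\ref{cor:sixterm_to_takeshi} is stated as an immediate consequence of Theorem~\ref{thm:sixterm_to_takeshi}, and your argument supplies exactly the needed verifications (that $\FKsixterm(A)$ and $\FKsixterm(B)$ are exact and, via $\K_0$-liftability of real-rank-zero \csa{}s, \rrzero{}) before invoking full faithfulness of the restricted equivalence and the fact that it reflects isomorphisms. No gaps.
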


\begin{proof}[Proof of Theorem \textup{\ref{thm:sixterm_to_takeshi}}]

We will explicitly define a functor from the category of exact $\Cattakeshi$-modules to the category of exact \rrzero{} $\Catsixterm$-modules.

Let an exact $\Cattakeshi$-module $N$ be given. We will define an $\Catsixterm$-module $M$. We begin in the obvious way: For $x\in X$, let $M(\overline{\{x\}},1)=N(\tcsi x)$ and $M(\widetilde{\{x\}},0)=N(\gosi x)$. Similarly, for $x\to y$, we define the even component of $i_{\widetilde{\{x\}}}^{\widetilde{\{y\}}}$ to be $i_{\gosi x}^{\gosi y}$, the odd component of $r_{\overline{\{x\}}}^{\overline{\{y\}}}$ to be  $r_{\tcsi x}^{\tcsi y}$, and the odd-to-even component of $\delta_{\overline{\{y\}}}^{\widetilde{\{x\}}}$ to be $\delta_{\tcsi y}^{\gosi x}$. This makes sure that, finally, we will have $\Ftakeshi(M)=N$. Also, we of course define $\delta_C^U\colon M(C,0)\to M(U,1)$ to be zero for every boundary pair $(U,C)$ so that $M$ will be \rrzero{}.

For $x\geq y$, let $x\to x_1\to x_2\to\cdots\to x_n\to y$ be the unique path from $x$ to $y$. Define the even component of $i_{\widetilde{\{x\}}}^{\widetilde{\{y\}}}$ to be the composition $i_{\gosi x}^{\gosi {x_1}} i_{\gosi {x_1}}^{\gosi {x_2}}\cdots i_{\gosi {x_n}}^{\gosi y}$ and the odd component of $r_{\overline{\{x\}}}^{\overline{\{y\}}}$ as the composition $r_{\tcsi x}^{\tcsi{x_1}} r_{\tcsi{x_1}}^{\tcsi{x_2}}\cdots r_{\tcsi{x_n}}^{\tcsi{y}}$. In case of $x=y$, this specifies to $i_{\gosi x}^{\gosi {x}}=\id_{M(\widetilde{\{x\}},0)}$ and $r_{\tcsi x}^{\tcsi x}=\id_{M(\overline{\{x\}},1)}$. If we have $x\to y$, then these definitions coincide with the ones we gave before.

We observe that the groups $M(\widetilde{\{x\}},0)$ with the maps $i_{\widetilde{\{x\}}}^{\widetilde{\{y\}}}$ define a precosheaf on $\Basis=\left\{\widetilde{\{x\}}\mid x\in X\right\}$. By Lemma \ref{lem:presheaf_on_canonical_base_is_sheaf} it is in fact a cosheaf. We can therefore apply Lemma \ref{lem:cosheaf_equivalence} and obtain groups $M(U,0)$ for all sets $U$ and maps $i_U^V\colon M(U,0)\to M(V,0)$ for open sets $U\subset V$ which fulfill the relations \ref{it:identities} and \ref{it:ii} in Proposition \ref{pro:relations}.

For an arbitrary locally closed subset $Y\in\LC(X)$ we write $Y=V\setminus U$ with open sets $U\subset V$ and define $M(Y,0)$ as the cokernel of the map
$
i_{U}^{V}\colon M(U,0)\to M(V,0)
$.
That this definition does not depend on the choice of $U$ and $V$ can be seen in a way similar to the proof of \cite{MN:Bootstrap}*{Lemma 2.15} using that pushouts of abelian groups preserve cokernels. We obtain maps $r_V^Y\colon M(V,0)\to M(Y,0)$ for every open set $V$ with relatively closed subset $Y\subseteq V$ such that the following holds:
If $Y\in\LC(X)$ can be written as differences $V_i\setminus U_i$ of open sets for $i\in\{1,2\}$ such that $U_1\subseteq U_2$ and $V_1\subseteq V_2$, then the diagram
\begin{equation}
  \label{eq:r_def}
\begin{split}
 \xymatrix{
M(U_1,0)\ar[r]^i\ar[d]^i & M(V_1,0)\ar@{->>}[r]^r\ar[d]^i & M(Y,0)\ar@{=}[d] \\
 M(U_2,0)\ar[r]^i & M(V_2,0)\ar@{->>}[r]^r & M(Y,0)
}
\end{split}
\end{equation}
commutes.

For a relatively open subset $U\subset Y\in\LC(X)$ we obtain a unique map $i_U^Y\colon M(U,0)\to M(Y,0)$ using the diagram
\begin{equation}
  \label{eq:r_relation}
\begin{split}
\xymatrix{
M(\widetilde\partial U,0)\ar[r]^-i\ar[d]^-i & M(\widetilde U,0)\ar@{->>}[r]^-r\ar[d]^-i & M(U,0)\ar@{..>}[d]^-i \\
M(\widetilde\partial Y,0)\ar[r]^-i & M(\widetilde Y,0)\ar@{->>}[r]^-r & M(Y,0).
}
\end{split}
\end{equation}
It is easy to check that this map coincides with the previously defined one in case $Y$ is open.

We find that, for $Y_i\in\LC(X)$ with $Y_1\subset Y_2$ open, and $Y_i=V_i\setminus U_i$ for $i\in\{1,2\}$ and open sets $U_i$, $V_i$ such that $U_1\subseteq U_2$ and $V_1\subseteq V_2$, the diagram
\begin{equation}
  \label{eq:ri_relations_for_opens}
\begin{split}
 \xymatrix{
M(U_1,0)\ar[r]^i\ar[d]^i & M(V_1,0)\ar@{->>}[r]^r\ar[d]^i & M(Y_1,0)\ar[d]^i \\
 M(U_2,0)\ar[r]^i & M(V_2,0)\ar@{->>}[r]^r & M(Y_2,0)
}
\end{split}
\end{equation}
commutes. We know this already for the left-hand square. For the right-hand square, it can be seen as follows: since $V_1$ is covered by $U_1$ and $\widetilde{Y_1}$, it suffices to check commutativity on the images $i_{U_1}^{V_1}\bigl(M(U_1,0)\bigr)$ and $i_{\widetilde{Y_1}}^{V_1}\bigl(M(\widetilde{Y_1},0)\bigr)$. On $i_{U_1}^{V_1}\bigl(M(U_1,0)\bigr)$ both compositions vanish. On the image of $M(\widetilde{Y_1},0)$, commutativity follows from \eqref{eq:r_def} and \eqref{eq:r_relation} considering the diagram
\[
\xymatrix{
M(\widetilde{Y_1},0)\ar[r]_i\ar[d]^i\ar@/^1.2pc/[rr]^r & M(V_1,0)\ar@{->>}[r]_r\ar[d]^i & M(Y_1,0)\ar[d]^i \\
 M(\widetilde{Y_2},0)\ar[r]^i\ar@/_1.2pc/[rr]_r & M(V_2,0)\ar@{->>}[r]^r & M(Y_2,0) .
}
\]

Now let $Y\in\LC(X)$, let $U$ be a relatively open subset of $Y$ and let $C=Y\setminus U$. Consider the diagram
\begin{equation}
  \label{eq:def_of_arbitrary_r}
\begin{split}
\xymatrix{
M(\widetilde\partial U,0)\ar[r]^-i\ar[d]^-i & M(\widetilde U,0)\ar@{->>}[r]^-r\ar[d]^-i & M(U,0)\ar[d]^-i \\
M(\widetilde\partial Y,0)\ar[r]^-i\ar@{->>}[d]^-r & M(\widetilde Y,0)\ar@{->>}[r]^-r\ar@{->>}[d]^-r & M(Y,0)\ar@{..>>}[d] \\
M(\widetilde\partial Y\setminus\widetilde\partial U,0)\ar[r]^-i & M(\widetilde Y\setminus\widetilde U,0)\ar@{->>}[r]^-r & M(C,0),
} 
\end{split}
\end{equation}
whose solid squares commute and whose rows and solid columns are exact. A diagram chase shows that there is a unique surjective map $r_Y^C\colon M(Y,0)\to M(C,0)$, as indicated by the dotted arrow, making the bottom-right square commute and making the right-hand column exact at $M(Y,0)$. Again, we can easily check that this map coincides with the previously defined one in case $Y$ is open.

We have now defined the even part of the module $M$ completely. It is straight-forward to check the relations \ref{it:ii} and \ref{it:rr} in Proposition \ref{pro:relations}. We will now prove that the relation \ref{it:ir} holds as well.

For this purpose, fix $Y\in\LC(X)$, let $U\subset Y$ be open and let $C\subset Y$ be closed. Consider the diagram
  \[
 \xymatrix{
 M(\widetilde U,0)\ar@{->>}[r]^-r\ar[d]^i & M(U,0)\ar[r]^-r\ar[d]^i  & M(U\cap C,0)\ar[d]^i \\
 M(\widetilde Y,0)\ar[r]^-r & M(Y,0)\ar[r]^-r  & M(C,0)
 } 
 \]
We would like to prove that the  right-hand square commutes. The  left-hand square commutes by definition of the map $i_U^Y$. Since $\widetilde U\cap C=U\cap C$, we can therefore assume without loss of generality that $U$ and $Y$ are open. Commutativity then follows from \eqref{eq:ri_relations_for_opens}.

Next, we will convince ourselves that the relation \ref{it:biprodcut} in Proposition \ref{pro:relations} holds on the even part of $M$. Let $W=Y\sqcup Z$ be a topologically disjoint union of subsets $Y,Z\in\LC(X)$. Fix $w\in M(W,0)$. Then $(w-w r_W^Z i_Z^W)r_W^Z=0$ as $i_Z^Wr_W^Z=\id_Z$. Hence there is $y\in M(Y,0)$ with $y i_Y^W=w-w r_W^Z i_Z^W$. Applying $r_W^Y$ shows $y=w r_W^Y$ as $i_Z^Wr_W^Y=0$. We get
\[
 w (r_W^Y i_Y^W + r_W^Z i_Z^W) = y i_Y^W + w r_W^Z i_Z^W = w.
\]
We have shown that $r_W^Y i_Y^W + r_W^Z i_Z^W=\id_W$ as desired.

We have defined all even groups for the desired module $M$ and the action of all transformations between them. We have checked all relations only involving transformations between even groups and verified exactness of $M(C,0)\to M(Y,0)\to M(U,0)$ for every boundary pair $Y=U\cup C$.

We intend to do the same for the odd part of the module $M$ in an analogous way. We start out with the given data consisting of the groups $M(\overline{\{x\}},1)$ and the maps $r_{\tcsi x}^{\tcsi y}$, $x\to y$, extend this to a sheaf on the basis $\bigl\{\overline{\{x\}}\mid x\in X\bigr\}$ of closed sets and apply Lemma \ref{lem:sheaf_equivalence}. Observing that every locally closed subset of $X$ can be written as a difference of two nested closed sets and using the functoriality of the kernel of a group homomorphism, we define groups $M(\overline{Y},1)$ for all $Y\in\LC(X)$ and actions for all transformations between these odd groups. Using arguments analogous to the ones above, we can verify the relations \ref{it:identities} to \ref{it:ir} in Proposition \ref{pro:relations} on the odd part of $M$. 

It remains to define the odd-to-even components of the boundary maps $\delta_C^U$ for all boundary pairs $(U,C)$, which has only been done in the special case $U=\widetilde{\{x\}}$, $C=\overline{\{y\}}$ with $x\to y$. Our general definition for $\delta_C^U\colon M(C,1)\to M(U,0)$ is
\begin{equation}
  \label{eq:def_of_delta}
  \delta_C^U=
  \sum_{x\to y, x\in U, y\in C} r_C^{\overline{\{y\}}\cap C}\ i_{\overline{\{y\}}\cap C}^{\overline{\{y\}}}\ \delta_{\overline{\{y\}}}^{\widetilde{\{x\}}}\ r_{\widetilde{\{x\}}}^{\widetilde{\{x\}}\cap U}\ i_{\widetilde{\{x\}}\cap U}^U.
\end{equation}
Our next aim is to verify the relations \ref{it:id}, \ref{it:dr} and \ref{it:rdi} in Proposition \ref{pro:relations}. 
We begin with relation \ref{it:id}. Let $(U,C)$ be a boundary pair and let $C'\subset C$ be relatively open. We have by the relations \ref{it:ii} and \ref{it:ir} that
\begin{align*}
 i_{C'}^C\ \delta_C^U &= i_{C'}^C \Big( \sum_{x\to y, x\in U, y\in C} r_C^{\overline{\{y\}}\cap C}\ i_{\overline{\{y\}}\cap C}^{\overline{\{y\}}}\ \delta_{\overline{\{y\}}}^{\widetilde{\{x\}}}\ r_{\widetilde{\{x\}}}^{\widetilde{\{x\}}\cap U}\ i_{\widetilde{\{x\}}\cap U}^U \Big) \\
&= \sum_{x\to y, x\in U, y\in C} r_{C'}^{\overline{\{y\}}\cap C'}\ i_{\overline{\{y\}}\cap C'}^{\overline{\{y\}}}\ \delta_{\overline{\{y\}}}^{\widetilde{\{x\}}}\ r_{\widetilde{\{x\}}}^{\widetilde{\{x\}}\cap U}\ i_{\widetilde{\{x\}}\cap U}^U.
\end{align*}
Since $C'$ is relatively open in $C$, $\overline{\{y\}}\cap C'$ is empty unless $y\in C'$. Therefore, the above sum equals
\[
 \delta_{C'}^U=\sum_{x\to y, x\in U, y\in C'} r_{C'}^{\overline{\{y\}}\cap C'}\ i_{\overline{\{y\}}\cap C'}^{\overline{\{y\}}}\ \delta_{\overline{\{y\}}}^{\widetilde{\{x\}}}\ r_{\widetilde{\{x\}}}^{\widetilde{\{x\}}\cap U}\ i_{\widetilde{\{x\}}\cap U}^U.
\]
This shows relation \ref{it:id}. The relation \ref{it:dr} follows similarly.

Next we will check relation \ref{it:rdi}. 
Let $Y,Z,W \in \LC(X)$ such that 
$Y \cup W \in \LC(X)$ containing $Y,W$ as closed subsets, 
$Z \cup W \in \LC(X)$ containing $Z,W$ as open subsets, 
and $W \subset Y\cup Z$. 
For each $x \in Z$ and $y \in Y$ with $x \to y$, 
we define $\gamma_{x,y}\colon M(Y,1) \to M(Z,0)$ by 
\[
\gamma_{x,y} = 
r_{Y}^{\overline{\{y\}}\cap Y}\ i_{\overline{\{y\}}\cap Y}^{\overline{\{y\}}}\ \delta_{\overline{\{y\}}}^{\widetilde{\{x\}}}\ r_{\widetilde{\{x\}}}^{\widetilde{\{x\}}\cap Z}\ i_{\widetilde{\{x\}}\cap Z}^Z.
\]
Since $W \setminus Y$ is an open subset of $Z$ 
(see the proof of Proposition~\ref{pro:relations}), 
we have $\widetilde{\{x\}}\cap (W \setminus Y) = \widetilde{\{x\}}\cap Z$ 
for each $x \in W \setminus Y$. 
We also have $y \in W$ if $y \in Y$ satisfies $x \to y$ 
for some $x \in W \setminus Y$ because $W \subset Y \cup W$ is closed. 
Therefore, by the relation \ref{it:rr} we get 
\begin{align*}
\delta_Y^{W\setminus Y} i_{W\setminus Y}^{Z} 
& = \Big(
\sum_{x\to y, x\in W \setminus Y, y\in Y} r_Y^{\overline{\{y\}}\cap Y}\ i_{\overline{\{y\}}\cap Y}^{\overline{\{y\}}}\ \delta_{\overline{\{y\}}}^{\widetilde{\{x\}}}\ r_{\widetilde{\{x\}}}^{\widetilde{\{x\}}\cap (W \setminus Y)}\ i_{\widetilde{\{x\}}\cap (W \setminus Y)}^{W \setminus Y}
\Big) i_{W\setminus Y}^{Z} \\
& = \sum_{x\to y, x\in W \setminus Y, y \in W \cap Y} r_Y^{\overline{\{y\}}\cap Y}\ i_{\overline{\{y\}}\cap Y}^{\overline{\{y\}}}\ \delta_{\overline{\{y\}}}^{\widetilde{\{x\}}}\ r_{\widetilde{\{x\}}}^{\widetilde{\{x\}}\cap Z}\ i_{\widetilde{\{x\}}\cap Z}^{Z} \\
& = \sum_{(x,y) \in \Lambda_1} \gamma_{x,y}
\end{align*}
where we set 
\[
\Lambda_1 = \{ (x,y) \mid x \to y,\  x \in W \setminus Y,\ y \in W \cap Y\}. 
\]
In a similar way using the facts that $W \setminus Z$ is a closed subset of $Y$ and that $Z$ is an open subset of $Y \cup Z$, we get 
\begin{align*}
r_Y^{W\setminus Z} \delta_{W\setminus Z}^{Z}
& = \sum_{(x,y) \in \Lambda_2} \gamma_{x,y}
\end{align*}
where we set 
\[
\Lambda_2 = \{ (x,y) \mid x \to y,\  x \in W \cap Z,\ y \in W \setminus Z\}. 
\]
If we set 
\begin{align*}
\Lambda_1' 
&= \{ (x,y) \mid x \to y,\  x \in W \cap Y \cap Z,\ y \in W \cap Y\}, \\
\Lambda_2' 
&= \{ (x,y) \mid x \to y,\  x \in W \cap Z,\ y \in W \cap Y \cap Z\} 
\end{align*}
then we have 
\[
\{ (x,y) \mid x \to y,\  x \in W \cap Z,\ y \in W \cap Y\} 
= \Lambda_1 \sqcup \Lambda_1' = \Lambda_2 \sqcup \Lambda_2' 
\]
because $W \subset Y\cup Z$ implies 
$(W \cap Z)\setminus Y = W \setminus Y$ and 
$(W \cap Y) \setminus Z=W \setminus Z$. 
Therefore in order to show the equality 
$\delta_Y^{W\setminus Y} i_{W\setminus Y}^{Z} 
= r_Y^{W\setminus Z} \delta_{W\setminus Z}^{Z}$, 
it suffices to show 
\[
\sum_{(x,y) \in \Lambda_1'} \gamma_{x,y}
= \sum_{(x,y) \in \Lambda_2'} \gamma_{x,y}. 
\]
For each $p \in W \cap Y \cap Z$, we get 
\begin{equation*}
\sum_{y\ot p}  r_{\overline{\{p\}}}^{\overline{\{y\}}}\ \delta_{\overline{\{y\}}}^{\widetilde{\{p\}}}
=\sum_{x\to p} \delta_{\overline{\{p\}}}^{\widetilde{\{x\}}}\ i_{\widetilde{\{x\}}}^{\widetilde{\{p\}}}
\end{equation*}
from the definition of $\Cattakeshi$-modules. 
Multiplying from the left with $r_{Y}^{\overline{\{p\}}\cap Y}\ i_{\overline{\{p\}}\cap Y}^{\overline{\{p\}}}$ and 
from the right with $r_{\widetilde{\{p\}}}^{\widetilde{\{p\}}\cap Z}\ i_{\widetilde{\{p\}}\cap Z}^{Z}$, 
and summing up over $p \in W \cap Y \cap Z$, 
we get 
\begin{align*}
\sum_{p \in W \cap Y \cap Z} & r_{Y}^{\overline{\{p\}}\cap Y}\ i_{\overline{\{p\}}\cap Y}^{\overline{\{p\}}}
\Big(\sum_{y\ot p}  r_{\overline{\{p\}}}^{\overline{\{y\}}}\ \delta_{\overline{\{y\}}}^{\widetilde{\{p\}}} \Big)
r_{\widetilde{\{p\}}}^{\widetilde{\{p\}}\cap Z}\ i_{\widetilde{\{p\}}\cap Z}^{Z} \\
&= \sum_{p \in W \cap Y \cap Z} r_{Y}^{\overline{\{p\}}\cap Y}\ i_{\overline{\{p\}}\cap Y}^{\overline{\{p\}}}
\Big(\sum_{x\to p} \delta_{\overline{\{p\}}}^{\widetilde{\{x\}}}\ i_{\widetilde{\{x\}}}^{\widetilde{\{p\}}}\Big)
r_{\widetilde{\{p\}}}^{\widetilde{\{p\}}\cap Z}\ i_{\widetilde{\{p\}}\cap Z}^{Z}.
\end{align*}
By the relations \ref{it:ii}, \ref{it:rr} and \ref{it:ir}, 
we get
\begin{align} \label{eq:verification_of_rdi}
\sum_{p \in W \cap Y \cap Z} & \sum_{y\ot p}
r_{Y}^{\overline{\{y\}}\cap Y}\ i_{\overline{\{y\}}\cap Y}^{\overline{\{y\}}}\ \delta_{\overline{\{y\}}}^{\widetilde{\{p\}}} 
r_{\widetilde{\{p\}}}^{\widetilde{\{p\}}\cap Z}\ i_{\widetilde{\{p\}}\cap Z}^{Z} \\
&= \sum_{p \in W \cap Y \cap Z} \sum_{x\to p} 
r_{Y}^{\overline{\{p\}}\cap Y}\ i_{\overline{\{p\}}\cap Y}^{\overline{\{p\}}}
\delta_{\overline{\{p\}}}^{\widetilde{\{x\}}}\ 
r_{\widetilde{\{x\}}}^{\widetilde{\{x\}}\cap Z}\ i_{\widetilde{\{x\}}\cap Z}^{Z}.\nonumber
\end{align}
Since $Y$ is locally closed, 
the conditions $p \in W \cap Y \cap Z$, $y\ot p$ and 
$\overline{\{y\}}\cap Y \neq \emptyset$ imply $y \in Y$. 
This further implies $y \in W$ because $W \subset Y \cup W$ is closed. 
Hence the  left-hand side of \eqref{eq:verification_of_rdi} equals 
$\sum_{(x,y) \in \Lambda_1'} \gamma_{x,y}$. 
In a similar way, we can see that 
the  right-hand side of \eqref{eq:verification_of_rdi} equals 
$\sum_{(x,y) \in \Lambda_2'} \gamma_{x,y}$. 
Thus we have proven the relation \ref{it:rdi}, 
and this finishes the verification of all relations 
in Proposition \ref{pro:relations}. 

Hence, $M$ is indeed an $\Catsixterm$-module. To see that $M$ is exact, it remains to show that the sequences $M(C,1)\xrightarrow{\delta_C^U} M(U,0)\xrightarrow{i_U^Y} M(Y,0)$ and $M(Y,1)\xrightarrow{r_Y^C}  M(C,1)\xrightarrow{\delta_C^U} M(U,0)$ are exact for all boundary pairs $(U,C)$ with $Y=U\cup C$.

Fix an element $x\in X$ and consider the commutative diagram
 \[
  \xymatrix{
 M(\{x\},1)\ar[r]^i\ar@{=}[d] & M(\overline{\{x\}},1)\ar[r]^r\ar[d]|\circ & M(\overline{\partial}\{x\},1)\ar[d]|\circ \\
 M(\{x\},1)\ar[r]|\circ & M(\widetilde{\partial}\{x\},0)\ar[r]^i & M(\widetilde{\{x\}},0)
 }
 \]
Using exactness of the upper row and the fact that $N$ was an exact $\Cattakeshi$-module, a diagram chase shows that the bottom row is exact. In a similar way, we see that the sequence
\[
M(\overline{\{x\}},1)\to M(\overline{\partial}\{x\},0)\to M(\{x\},0).
\]
is exact for every $x\in X$.

Next, let $Y\in\LC(X)$ and let $x\in Y$ be a closed point. Then $Y\cap\widetilde{\{x\}}$ is relatively closed in $\widetilde{\{x\}}$ because $Y$ is locally closed. A diagram chase in the commutative diagram
\[
 \xymatrix{
& M(\widetilde{\partial}\{x\} \setminus (Y\cap\widetilde{\partial}\{x\}),0)\ar@{=}[r]\ar[d]^i & M(\widetilde{\{x\}}\setminus (Y\cap\widetilde{\{x\}}),0)\ar[d]^i \\
M(\{x\},1)\ar@{=}[d]\ar[r]|\circ & M(\widetilde{\partial}\{x\},0)\ar@{->>}[d]^r\ar[r]^-i & M(\widetilde{\{x\}},0)\ar@{->>}[d]^r \\
M(\{x\},1)\ar[r]|-\circ &  M(Y\cap\widetilde{\partial}\{x\},0)\ar[r]^-i & M(Y\cap\widetilde{\{x\}},0),
}
\]
whose columns and top row are exact, yields exactness of the bottom row. By a diagram chase in the commutative diagram
\[
 \xymatrix{
M(\{x\},1)\ar@{=}[d]\ar[r]|-\circ &  M(Y\cap\widetilde{\partial}\{x\},0)\ar[r]^-i\ar[d]^i & M(Y\cap\widetilde{\{x\}},0)\ar[d]^i \\
M(\{x\},1)\ar[r]|-\circ &  M(Y\setminus\{x\},0)\ar[r]^-i & M(Y,0)
}
\]
using the exact cosheaf sequence \eqref{eq:cosheaf_condition_on_topology} for the covering $(Y\setminus\{x\},Y\cap\widetilde{\{x\}})$ of $Y$ we obtain exactness of the bottom row. Notice that, using a further diagram chase, it is not hard to deduce the exactness of the cosheaf sequence for a relatively open covering of a locally closed set from the open case.

We have established the exactness of the sequence $M(C,1)\xrightarrow{\delta_C^U} M(U,0)\xrightarrow{i_U^Y} M(Y,0)$ for all boundary pairs $(U,C)$ with $C$ a singleton. Analogously, we find that $M(Y,1)\xrightarrow{r_Y^C}  M(C,1)\xrightarrow{\delta_C^U} M(U,0)$ is exact whenever $U$ is a singleton.

We will proceed by an inductive argument. Let $n\geq 1$ be a natural number and assume that exactness
of the sequence $M(C,1)\xrightarrow{\delta_C^U} M(U,0)\xrightarrow{i_U^Y} M(Y,0)$ is proven for all boundary pairs $(U,C)$ for which $C$ has at most $n$ elements. Let $(U,C)$ be a boundary pair such that $C$ has $n+1$ elements. Write $Y=U\cup C$. Let $p\in C$ be a maximal point and set $U'=U\cup\{p\}$, $C'=C\setminus\{p\}$. Then $(U',C')$ is a boundary pair. A diagram chase in the commutative diagram
\[
 \xymatrix{
M(\{p\},1)\ar@{=}[d]\ar[r]^-i & M(C,1)\ar[d]|-\circ\ar[r]^-r & M(C',1)\ar[d]|-\circ\ar[r]|-\circ &  M(\{p\},0)\ar@{=}[d] \\
M(\{p\},1)\ar[r]|-\circ & M(U,0)\ar[d]^-i\ar[r]^-i & M(U',0)\ar[d]^-i\ar[r]^-r &  M(\{p\},0) \\
& M(Y,0)\ar@{=}[r] & M(Y,0), &
}
\]
whose rows and third column are exact, shows exactness of the second column. Again, exactness of $M(Y,1)\xrightarrow{r_Y^C}  M(C,1)\xrightarrow{\delta_C^U} M(U,0)$ for all boundary pairs follows in a analogous manner. We conclude that $M$ is an exact $\Catsixterm$-module.

Summing up, we have associated an exact \rrzero{} $\Catsixterm$-module with every exact $\Cattakeshi$-module. By a routine argument, this assignment extends uniquely to a functor~$G$ from the category of exact $\Cattakeshi$-modules to the category of exact \rrzero{} $\Catsixterm$-modules. Let $F$ be the restriction of the functor $\Ftakeshi$ to the category of exact \rrzero{} $\Catsixterm$-modules. Then the composition $GF$ is equal to the identity functor on the category of exact $\Cattakeshi$-modules. It remains to show that $FG$ is naturally isomorphic to the identity functor on the category of exact \rrzero{} $\Catsixterm$-modules.

Let $M$ be an exact \rrzero{} $\Catsixterm$-module. We will construct a natural $\Catsixterm$-module isomorphism $\eta_M\colon M\to (FG)(M)$. For $x\in X$ we have $M(\widetilde{\{x\}},0)=(FG)(M)(\widetilde{\{x\}},0)$ and $M(\overline{\{x\}},1)=(FG)(M)(\overline{\{x\}},1)$. Hence we set $\eta_M(\widetilde{\{x\}},0)=\id_{M(\widetilde{\{x\}},0)}$ and $\eta_M(\overline{\{x\}},1)=\id_{M(\overline{\{x\}},1)}$. Using the universal property of kernels and cokernels we obtain natural group homomorphisms $f_Y\colon M(Y,1)\to (FG)(M)(Y,1)$ and $g_Y\colon (FG)(M)(Y,0)\to M(Y,0)$ for every $Y\in\LC(X)$. An application of the Five Lemma shows that these are in fact isomorphisms. We can therefore define $\eta_M(Y,1)=f_Y$ and $\eta_M(Y,0)=(g_Y)^{-1}$.

Finally, we check that this collection of maps constitutes an $\Catsixterm$-module homomorphism, that is, the group homomorphism $\eta_M\colon M\to (FG)(M)$ intertwines the actions of the category $\Catsixterm$ on $M$ and on $(FG)(M)$. By construction this is true for the transformations $(i_{\widetilde{\{x\}}}^{\widetilde{\{y\}}},0)$, $(r_{\overline{\{x\}}}^{\overline{\{y\}}},1)$ and $\delta_{\overline{\{y\}}}^{\widetilde{\{x\}}}$ for all $x,y\in X$ with $x\to y$. By Lemma \ref{lem:sheaf_equivalence} and Lemma \ref{lem:cosheaf_equivalence} it is also true for the transformation $(i_U^V,0)$ for all open subset $U,V$ of $X$ with $U\subset V$ and for $(r_C^D,1)$ for all closed subsets $C,D$ of $X$ with $D\subset C$.

Let $V\subset X$ be open and let $Y\subset V$ be relatively closed. Since $(r_V^Y,0)$ was defined as a natural projection onto a cokernel, our assertion holds for this transformation as well. Consequently, by \eqref{eq:r_relation} the assertion also follows for the transformation $(i_U^Y,0)$ for $Y\in\LC(X)$ and $U\subset Y$ relatively open. Finally \eqref{eq:def_of_arbitrary_r} implies the assertion for the transformation $r_Y^C$ with $Y\in\LC(X)$ and $C\subset Y$ relatively closed. We have shown that $\eta$ intertwines the actions of all even transformations on the $0$-parts of $M$ and $(FG)(M)$. By analogous arguments the same follows for the actions of all even transformations on the $1$-parts of $M$ and $(FG)(M)$.

Our last step is to consider the action of a boundary transformation $\delta_C^U$ for a boundary pair $(U,C)$. Since $M$ and $(FG)(M)$ are \rrzero{} the $0$\nb-to\nb-$1$ component of $\delta_C^U$ acts trivially on both modules. We have already seen that the assertion is true for the $1$\nb-to\nb-$0$ component of $\delta_C^U$ in the specific case that $(U,C)=(\widetilde{\{x\}},\overline{\{y\}})$ with $x\to y$. The general case then follows from  \eqref{eq:def_of_delta} since $X$ is an EBP space. 
\end{proof}

\section{Reduced filtered \texorpdfstring{$\K$}{K}-theory} \label{gunnar}

Let $X$ be an arbitrary finite $T_0$-space. We recall some definitions and facts from~\cite{range_result}.
In~\cite{restorff}, Gunnar Restorff introduced reduced filtered $\K$\nb-the\-ory $\FKgunnar$ and showed that it classifies purely infinite \CK{}s up to stable isomorphism. In~\cite{range_result}, the range of reduced filtered $\K$\nb-the\-ory is established with respect to purely infinite \CK{}s.

\begin{definition}[\cite{range_result}*{Definition~3.1}]
Let $\Catgunnar$ denote the universal pre-additive category generated by objects $\gsi x, \gobd x, \gosi x $ for all $x\in X$ and morphisms $\delta_{\gsi x}^{\gobd x}$ and $i_{\gobd x}^{\gosi x}$ for all $x\in X$, and $i_{\gosi y}^{\gobd x}$ when $y\to x$, subject to the relations
\begin{equation}
  \label{eq:gunnarrelation1}
\delta_{\gsi x}^{\gobd x} i_{\gobd x}^{\gosi x} =0
\end{equation}
\begin{equation}
  \label{eq:gunnarrelation2}
 i_p i_{\gosi{y(p)}}^{\gobd x} =  i_q i_{\gosi{y(q)}}^{\gobd x}
\end{equation}
for all $x\in X$, all $y\in X$ satisfying $y> x$, and all paths $p,q\in\Path(y,x)$, where for a path $p=(z_k)_{k=1}^n$ in $\Path(y,x)$, we define $y(p)=z_2$, and
\[
i_p = i_{\gosi{z_n}}^{\gobd{z_{n-1}}} i_{\gobd{z_{n-1}}}^{\gosi{z_{n-2}}}\cdots i_{\gosi{z_3}}^{\gobd{z_2}} i_{\gobd{z_2}}^{\gosi{z_2}} .
\]
\end{definition}

\begin{definition}
It is easy to see that the relations in $\Catsixterm$ corresponding to \eqref{eq:gunnarrelation1} and \eqref{eq:gunnarrelation2} hold. We can thus define an additive functor $\Catgunnar\to\Catsixterm$ by $\gsi x\mapsto (\si x,1)$, $\gobd x\mapsto (\ob \{x\},0)$ and $\gosi x\mapsto (\osi x,0)$, and in the obvious way on morphisms. Let $\Fgunnar\colon\Mod(\Catsixterm)\to\Mod(\Catgunnar)$ denote the induced functor. Define \emph{reduced filtered $\K$\nb-the\-ory}, $\FKgunnar$ as the composition of $\FKsixterm$ with $\Fgunnar$.
\end{definition}

An equivalent definition of the functor~$\FKgunnar$ is given in \cite{range_result}*{Definition~3.4}.

\begin{definition}[\cite{range_result}*{Definition~3.6}]
An $\Catgunnar$-module $M$ is called \emph{exact} if the sequences
\begin{equation}
  \label{eq:Catgunnarrel1}
M(\gsi x) \xrightarrow{\delta} M(\gobd x) \xrightarrow{i} M(\gosi x)
\end{equation}
\begin{equation}
  \label{eq:Catgunnarrel2}
\bigoplus_{(p,q)\in \DoublePaths(x)} M(\gosi{s(p,q)}) \xrightarrow{(i_p-i_q)} \bigoplus_{y\to x} M(\gosi y) \xrightarrow{(i_{\gosi y}^{\gobd x})} M(\gobd x) \longrightarrow 0
\end{equation}
are exact for all $x\in X$, where $\DoublePaths(x)$ denotes the set of pairs of distinct paths $(p,q)$ to $x$ and from some common element which is denoted $s(p,q)$.
\end{definition}

The following lemma is a generalization of~\cite{range_result}*{Lemma~3.9}.  We omit the proof as the same technique applies here.

\begin{lemma} \label{lem:exactseq}
Let $M$ be an exact \rrzero{} $\Catsixterm$-module.
Let $Y$ be an open subset of $X$ and let $(U_i)_{i\in I}$ be an open covering of $Y$. Then the following sequence is exact:
\[
\bigoplus_{i,j\in I} M(U_i\cap U_j,0)\xrightarrow{(i_{U_i\cap U_j}^{U_i}-i_{U_i\cap U_j}^{U_j})}
\bigoplus_{i\in I} M(U_i,0)\xrightarrow{(i_{U_i}^Y)}
M(Y,0) \longrightarrow 0.
\]
\end{lemma}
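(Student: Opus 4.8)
The plan is to recognise the displayed sequence as the cosheaf condition \eqref{eq:cosheaf_condition_on_topology} for the precosheaf $M(-,0)\colon\Op(X)\to\Ab$ and to prove it by reducing, through the canonical basis, to a single Mayer--Vietoris computation for two-element open coverings.

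First I would reduce to the two-element case. The restriction of $M(-,0)$ to the canonical basis $\Basis=\bigl\{\widetilde{\{x\}}\mid x\in X\bigr\}$ is a cosheaf by Lemma~\ref{lem:presheaf_on_canonical_base_is_sheaf}, so by Lemma~\ref{lem:cosheaf_equivalence} there is a cosheaf $\widehat M$ on $\Op(X)$ restricting to $M(-,0)$ on $\Basis$, and it suffices to show that the natural comparison map $\widehat M(U)\to M(U,0)$ --- induced via the presentation \eqref{eq:cosheaf_condition_on_basis} by the maps $i_{\widetilde{\{x\}}}^U$, $x\in U$ --- is an isomorphism for every open $U$; this exhibits $M(-,0)$ as a cosheaf, which is the assertion. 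I would prove this by induction on $|U|$. It is clear when $U\in\Basis$, in particular when $|U|\le1$. When $|U|\ge2$ and $U\notin\Basis$, choose a minimal point $p$ of $U$; then $\{p\}$ is closed in $U$, so $U=(U\setminus\{p\})\cup\widetilde{\{p\}}$ is a two-element open covering with $(U\setminus\{p\})\cap\widetilde{\{p\}}=\widetilde{\{p\}}\setminus\{p\}$, and all three of $U\setminus\{p\}$, $\widetilde{\{p\}}$, $\widetilde{\{p\}}\setminus\{p\}$ have strictly fewer points than $U$ (here $\widetilde{\{p\}}\subsetneq U$ because $U\notin\Basis$). Since $\widehat M$ is a cosheaf, $\widehat M(U)=\coker\bigl(\widehat M(\widetilde{\{p\}}\setminus\{p\})\to\widehat M(U\setminus\{p\})\oplus\widehat M(\widetilde{\{p\}})\bigr)$; comparing this with the two-element case of the lemma for this covering (proved below) and using the inductive hypothesis on the three smaller sets, a comparison of cokernels gives that $\widehat M(U)\to M(U,0)$ is an isomorphism.

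It then remains to prove the two-element case: for $Y=V\cup W$ with $V,W\in\Op(X)$, the sequence $M(V\cap W,0)\xrightarrow{(i_{V\cap W}^V,\,-i_{V\cap W}^W)}M(V,0)\oplus M(W,0)\xrightarrow{(i_V^Y,\,i_W^Y)}M(Y,0)\to0$ is exact (the diagonal \v{C}ech terms contribute nothing, so this is the two-element instance of the lemma). Set $C=Y\setminus W$; then $C$ is closed in $Y$, $C\subseteq V$ is closed in $V$, $W\cap C=\emptyset$ and $V\setminus C=V\cap W$, so $(W,C)$ is a boundary pair in $Y$ and $(V\cap W,C)$ is a boundary pair in $V$. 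Feeding these into the six-term exact sequences carried by $M$ and using that $M$ is \rrzero{} (so the connecting maps $M(C,0)\to M(W,1)$ and $M(C,0)\to M(V\cap W,1)$ vanish) yields exactness of $M(W,0)\xrightarrow{i_W^Y}M(Y,0)\xrightarrow{r_Y^C}M(C,0)\to0$ and of $M(V\cap W,0)\xrightarrow{i_{V\cap W}^V}M(V,0)\xrightarrow{r_V^C}M(C,0)\to0$. Vanishing of the composite and surjectivity onto $M(Y,0)$ are then immediate from relations~\ref{it:ii} and~\ref{it:ir} of Proposition~\ref{pro:relations} (the latter gives $i_V^Yr_Y^C=r_V^C$ and $i_W^Yr_Y^C=0$). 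For exactness in the middle, given $(v,w)$ in the kernel, applying $r_Y^C$ forces $r_V^C(v)=0$, so $v=i_{V\cap W}^V(a)$; subtracting the \v{C}ech boundary of $a$ we may assume $v=0$, so $w\in\ker i_W^Y=\im\bigl(\delta_C^W\colon M(C,1)\to M(W,0)\bigr)$. The crucial point is now that Proposition~\ref{pro:relations}\ref{it:rdi}, applied with $C$, $W$, $V$ in the roles of $Y$, $Z$, $W$, gives the identity $\delta_C^{V\cap W}\,i_{V\cap W}^W=\delta_C^W$; hence $w=i_{V\cap W}^W(b)$ with $b=\delta_C^{V\cap W}(c)$, and $i_{V\cap W}^V(b)=0$ because consecutive maps in the six-term sequence of $(V\cap W,C)$ compose to zero. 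Thus $(0,w)$ is the \v{C}ech boundary of $-b$, so $(v,w)$ lies in the image of the \v{C}ech differential.

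The step I expect to be the real obstacle is exactly this middle-exactness in the two-element case: seeing that an element of $\ker i_W^Y$ is already induced up from $M(V\cap W,0)$ is not formal, and it relies squarely on the compatibility identity $\delta_C^{V\cap W}i_{V\cap W}^W=\delta_C^W$ furnished by Proposition~\ref{pro:relations}\ref{it:rdi}. (This is the mechanism behind \cite{range_result}*{Lemma~3.9}; the argument above is that proof carried out for an arbitrary open covering.)
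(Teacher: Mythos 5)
Your argument is correct. Note that the paper gives no proof of this lemma: it is stated as a generalization of \cite{range_result}*{Lemma~3.9} with the proof omitted because ``the same technique applies.'' Your mechanism --- six-term exactness plus the \rrzero{} hypothesis to get surjectivity of $r_Y^C$ and $r_V^C$, the identity $\delta_C^{V\cap W}\,i_{V\cap W}^{W}=\delta_C^{W}$ from Proposition~\ref{pro:relations}\ref{it:rdi} to get middle exactness for a two-set covering, and the basis/cosheaf equivalence (Lemmas~\ref{lem:cosheaf_equivalence} and~\ref{lem:presheaf_on_canonical_base_is_sheaf}) to bootstrap from the coverings $\{U\setminus\{p\},\widetilde{\{p\}}\}$ to arbitrary open coverings --- is exactly that technique, and the same two-set reduction reappears explicitly in the proof of Theorem~\ref{thm:sixterm_to_takeshi}. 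The only loose end is the base case $U=\emptyset$, which is not in $\Basis$, but there both $\widehat M(\emptyset)$ and $M(\emptyset,0)$ vanish since $\emptyset$ is a zero object in $\Catsixterm$.
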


\begin{corollary} \label{cor:STexactR}
Let $M$ be an exact \rrzero{} $\Catsixterm$-module and set $N=\Fgunnar(M)$. Then $N$ is an exact $\Catgunnar$-module.
\end{corollary}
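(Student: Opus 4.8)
The plan is to unwind the definition of $N=\Fgunnar(M)$ and verify the two exactness conditions \eqref{eq:Catgunnarrel1} and \eqref{eq:Catgunnarrel2} separately, reducing each to something already available for~$M$. Recall that $\Fgunnar$ is induced by the functor sending $\gsi x\mapsto(\si x,1)$, $\gobd x\mapsto(\obd x,0)$ and $\gosi x\mapsto(\osi x,0)$; in particular $N(\gsi x)=M(\{x\},1)$, $N(\gobd x)=M(\obd x,0)$, $N(\gosi x)=M(\osi x,0)$, and the image in $\Catsixterm$ of the morphism $i_p$ attached to a path $p$ is, by Proposition~\ref{pro:relations}\ref{it:ii}, the single map $i_{\osi{s(p,q)}}^{\osi{y(p)}}$.

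For \eqref{eq:Catgunnarrel1} I would note that $\osi x=\{x\}\sqcup\obd x$ exhibits $(\obd x,\{x\})$ as a boundary pair in~$X$, with $\obd x$ relatively open and $\{x\}$ relatively closed in~$\osi x$. Hence the sequence $N(\gsi x)\xrightarrow{\delta}N(\gobd x)\xrightarrow{i}N(\gosi x)$ is precisely the segment $M(\{x\},1)\to M(\obd x,0)\to M(\osi x,0)$ of the six-term exact sequence of~$M$ associated to this boundary pair, so its exactness is immediate from the exactness of~$M$ (this step needs neither the real-rank-zero-like property of~$M$ nor Lemma~\ref{lem:exactseq}). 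For \eqref{eq:Catgunnarrel2}, the topological input is that $\obd x=\bigcup_{y\to x}\osi y$ is an open covering of the open set~$\obd x$. Applying Lemma~\ref{lem:exactseq} with $Y=\obd x$ and this covering yields exactness of
\[
\bigoplus_{y,y'\to x}M(\osi y\cap\osi{y'},0)\longrightarrow\bigoplus_{y\to x}M(\osi y,0)\longrightarrow M(\obd x,0)\longrightarrow 0,
\]
which already gives the surjectivity claimed in \eqref{eq:Catgunnarrel2} and identifies $\ker\bigl(\bigoplus_{y\to x}N(\gosi y)\to N(\gobd x)\bigr)$ with the image of the covering differential. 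Thus it remains to show that this image equals the image of the double-path differential $(i_p-i_q)$ in \eqref{eq:Catgunnarrel2}.

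I expect this last comparison of images to be the main point of the proof. The inclusion ``$\supseteq$'' is straightforward: for $(p,q)\in\DoublePaths(x)$ with $s=s(p,q)$ one has $s\ge y(p)$ and $s\ge y(q)$, so $\osi s\subseteq\osi{y(p)}\cap\osi{y(q)}$, and using Proposition~\ref{pro:relations}\ref{it:ii} the corresponding component of $(i_p-i_q)$ factors through the covering summand $M(\osi{y(p)}\cap\osi{y(q)},0)$ via $i_{\osi s}^{\osi{y(p)}\cap\osi{y(q)}}$. For ``$\subseteq$'' I would use that, since $M$ restricted to the canonical base is a (co)sheaf by Lemma~\ref{lem:presheaf_on_canonical_base_is_sheaf} (or by a further application of Lemma~\ref{lem:exactseq} to the covering $\{\osi s\mid s\in\osi y\cap\osi{y'}\}$ of $\osi y\cap\osi{y'}$), the group $M(\osi y\cap\osi{y'},0)$ is generated by the images of the groups $M(\osi s,0)$ over those $s$ with $s\ge y$ and $s\ge y'$. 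For each such~$s$ one picks a path from~$s$ to~$x$ whose step into~$x$ is~$y$ and another whose step into~$x$ is~$y'$; these are distinct unless $y=y'$, in which case the corresponding covering summand maps to~$0$, and otherwise they form a pair in $\DoublePaths(x)$ with $s(p,q)=s$. Feeding these generators through the covering differential reproduces exactly the components of $(i_p-i_q)$, so the two images agree, and \eqref{eq:Catgunnarrel2} is exact for~$N$; together with the first part this shows that $N$ is an exact $\Catgunnar$-module.
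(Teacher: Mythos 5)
Your proof is correct and follows exactly the route the paper intends: the paper states this corollary without proof immediately after Lemma~\ref{lem:exactseq}, which is precisely the tool you invoke, and the remaining steps (the six-term sequence for the boundary pair $(\widetilde\partial\{x\},\{x\})$ in $\widetilde{\{x\}}$ for the first exactness condition, and the comparison of the covering differential with the double-path differential via the generation of $M(\widetilde{\{y\}}\cap\widetilde{\{y'\}},0)$ by the $M(\widetilde{\{s\}},0)$) are the details the authors leave to the reader. Your observation that the real-rank-zero-like hypothesis is only needed for the second condition is also accurate.
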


\begin{remark}
 If $X$ is a unique path space, then the set $\DoublePaths(x)$ is empty for every $x\in X$. Hence, for an exact $\Catgunnar$-module $M$, the map $(i_{\gosi y}^{\gobd x})\colon\displaystyle\bigoplus_{y\to x} M(\gosi y) \to M(\gobd x)$ is an isomorphism. In this sense, the groups $M(\gobd x)$ are redundant for an exact $\Catgunnar$-module in case $X$ is a unique path space.
\end{remark}

By combining the following Proposition~\ref{prop:range} and Theorem~\ref{thm:range}, one may obtain a complete description of the range of reduced filtered $\K$\nb-the\-ory for purely infinite graph \csa{}s and \CK{}s.

\begin{proposition}[{\cite{range_result}*{Proposition~4.7}}] \label{prop:range}
Let $A$ be a purely infinite graph \csa{} over $X$.  Then $\FKgunnar(A)$ is an exact $\Catgunnar$-module, and $\FK_{\si x}^1(A)$ is free for all $x\in X$.

If $A$ is a purely infinite \CK{} over $X$, then furthermore $\K_1(A(x))$ and $\K_0(A(\osi x))$ are finitely generated, and the rank of $\K_1(A(x))$ coincides with the rank of the cokernel of the map $i\colon \K_0(A(\obd x)\to \K_0(A(\osi x))$, for all $x\in X$.
\end{proposition}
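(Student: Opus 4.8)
The plan is to treat the graph \csa{} assertions and the \CK{} assertions separately, deducing the former from the categorical machinery already in place and the latter from the structure theory of subquotients of \CK{} algebras together with a rational-rank count.

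First I would establish that $\FKgunnar(A)$ is an exact $\Catgunnar$-module for any purely infinite graph \csa{}~$A$ over~$X$. Since $\FKgunnar=\Fgunnar\circ\FKsixterm$, by Corollary~\ref{cor:STexactR} it suffices to check that $\FKsixterm(A)$ is an exact \rrzero{} $\Catsixterm$-module. Exactness holds for every \csa{} over~$X$ by Remark~\ref{rem:K0lift}; and since a purely infinite graph \csa{} has real rank zero by~\cite{hongszymanski}, the same remark (applied to the tight \csa{}~$A$ over~$X$) shows that $\FKsixterm(A)$ is \rrzero{}. For the freeness of $\FK_{\si x}^1(A)=\K_1(A(\si x))$, I would use that $A(\si x)$ is a subquotient of~$A$, that every subquotient of a graph \csa{} is again (up to stable isomorphism) a graph \csa{}, and that the $\K_1$-group of any graph \csa{} is the kernel of a homomorphism between free abelian groups, hence free.

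For the \CK{} part, the key input is a structural observation: if $A$ is a purely infinite \CK{} over~$X$, then every subquotient $A(Y)$, $Y\in\LC(X)$, is stably isomorphic to $C^*(E_Y)$ for a \emph{finite} graph $E_Y$ \emph{without sinks}. I would justify this as follows: the graph underlying~$A$ is finite, so there are no breaking vertices and ideals of~$A$ correspond to hereditary saturated vertex sets and quotients to their complements; hereditariness rules out sinks in the graph of an ideal, saturation rules out sinks in the graph of a quotient, and $A(Y)$ is an ideal in a quotient of~$A$. Granting this, $\K_0(A(Y))$ and $\K_1(A(Y))$ are the cokernel and kernel of the integer matrix $1-A_{E_Y}^{\mathsf t}$ acting on $\Z^{E_Y^0}$, so both are finitely generated, and since the cokernel and kernel of a square matrix over~$\Z$ have the same rational rank, $\rank\K_0(A(Y))=\rank\K_1(A(Y))$. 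In particular this yields the finite generation of $\K_1(A(x))$ and of $\K_0(A(\osi x))$ and the equality $\rank\K_0(A(x))=\rank\K_1(A(x))$.

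Finally I would relate $\rank\K_0(A(x))$ to $\rank\coker(i)$ for $i\colon\K_0(A(\obd x))\to\K_0(A(\osi x))$. The set $\obd x=\widetilde\partial\{x\}$ is open in $\osi x=\widetilde{\{x\}}$ with closed complement $\{x\}$, so $(\obd x,\{x\})$ is a boundary pair, and the \rrzero{} property of $\FKsixterm(A)$ forces the boundary map $\K_0(A(x))\to\K_1(A(\obd x))$ in the associated six-term sequence to vanish; the remaining segment $\K_0(A(\obd x))\xrightarrow{i}\K_0(A(\osi x))\to\K_0(A(x))\to0$ is then exact, so $\coker(i)\cong\K_0(A(x))$ and hence $\rank\coker(i)=\rank\K_0(A(x))=\rank\K_1(A(x))$. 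The hard part will be the third step: pinning down, with the correct conventions for hereditary and saturated vertex sets and for the graph associated to an ideal or to a quotient, that subquotients of a finite-graph \csa{} without sinks are again (up to stable isomorphism) finite-graph \csa s without sinks, and that the $\K$-theory formulas and the resulting identity $\rank\K_0=\rank\K_1$ survive in this generality (for instance in the presence of sources or of vertices failing condition~(K)). Exactness, freeness, and the six-term bookkeeping with the vanishing boundary map should be routine given the results already recorded.
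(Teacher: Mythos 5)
This proposition is not proved in the paper at all: it is imported verbatim from the companion paper \cite{range_result}*{Proposition~4.7}, so there is no in-text argument to compare against. Your reconstruction is correct and follows the expected route. The first part is exactly the intended bookkeeping: exactness of $\FKsixterm(A)$ holds for any \csa{} over~$X$, the \rrzero{} property follows from real rank zero of purely infinite graph \csa{}s via \cite{linrordam} and Remark~\ref{rem:K0lift} (tightness is not actually needed there, since the $A(U)$ are ideals of $A$ in any case), Corollary~\ref{cor:STexactR} then gives exactness of $\FKgunnar(A)$, and freeness of $\K_1$ of gauge-invariant subquotients is the standard kernel-of-an-integer-matrix argument. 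For the \CK{} part, your reduction of the rank identity is clean: the \rrzero{} property kills the exponential map of the boundary pair $(\widetilde\partial\{x\},\{x\})$ in $\widetilde{\{x\}}$, so $\coker(i)\cong\K_0(A(\{x\}))$, and the identity $\rank\K_0=\rank\K_1$ for a subquotient stably isomorphic to the algebra of a finite graph with only regular vertices is immediate from the square matrix $1-A_{E}^{\mathsf t}$. The one genuinely load-bearing step is the structural claim that every $A(Y)$ is stably a finite-graph algebra with no singular vertices; your hereditary/saturated argument is the right one, but note that it requires that all ideals of~$A$ be gauge-invariant, which is where pure infiniteness (equivalently condition~(K) for the defining graph) enters --- without it the lattice $\Op(X)\to\Ideals(A)$ could hit non-gauge-invariant ideals and the subgraph description of $A(Y)$ would break down. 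With that caveat made explicit, the argument is complete and is essentially the same computation as in the companion paper, which works directly with the block-triangular form of $1-A^{\mathsf t}$ rather than passing through subquotient graphs.
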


\begin{theorem}[\cite{range_result}*{Theorem~4.8}] \label{thm:range}
Let $M$ be an exact $\Catgunnar$-module  with $M(\gsi x)$ free for all $x\in X$.
Then there exists a countable graph $E$ satisfying that all vertices in $E$ are regular and support at least two cycles, that $C^*(E)$ is tight over~$X$ and that $\FKgunnar\bigl(C^*(E)\bigr)$ is isomorphic to~$M$.
By construction $C^*(E)$ is purely infinite.

The graph $E$ can be chosen to be finite if \textup{(}and only if\textup{)} $M(\gsi x)$ and $M(\gosi x)$ are finitely generated, and the rank of $M(\gsi x)$ coincides with the rank of the cokernel of $i\colon M(\gobd x)\to M(\gosi x)$, for all $x\in X$.  
If $E$ is chosen finite, then by construction $C^*(E)$ is a Cuntz--Krie\-ger algebra.
\end{theorem}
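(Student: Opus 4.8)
The plan is to construct the graph $E$ directly from the $\Catgunnar$-module $M$, organising its vertices into blocks indexed by the points of $X$, and then to check the required properties one by one: that $C^*(E)$ is tight over $X$; that every vertex is regular and supports at least two return paths, which forces Condition~(K) --- hence that every ideal of $C^*(E)$ is gauge-invariant --- and forces pure infiniteness; and that $\FKgunnar(C^*(E))\cong M$. The finiteness clause will emerge from tracking when the building blocks can be taken finite, and its ``only if'' half is precisely the necessity assertion of Proposition~\ref{prop:range}.

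For the construction I would put $E^0=\bigsqcup_{x\in X}V_x$ with a level map $\ell$ sending $V_x$ to $x$, and only ever place edges from $V_x$ into $\bigsqcup_{y\ge x}V_y$; then a subset of $E^0$ is hereditary exactly when its image is an up-set of $X$, i.e.\ open, which is what ultimately yields tightness. Writing $B=(B_{yx})_{x,y\in X}$ for the block decomposition of the adjacency matrix ($B_{yx}$ counts edges from $V_x$ to $V_y$ and vanishes unless $y\ge x$), one processes the points of $X$ from the maximal ones downwards. At a point $x$ --- all larger points having been treated --- choose a finite or countable nonnegative integer square matrix $B_{xx}$ whose graph is strongly connected with at least two return paths through every vertex, with $\ker(I-B_{xx}^{\mathrm t})=M(\gsi x)$ and with $\coker(I-B_{xx}^{\mathrm t})$ equal to $\cok\bigl(i\colon M(\gobd x)\to M(\gosi x)\bigr)$; by exactness of $M$ and the real rank zero of purely infinite graph \csa{}s, the latter group is exactly what $K_0$ of the simple subquotient $A(\si x)$ is forced to be. Since $M(\gsi x)$ is free, such a $B_{xx}$ exists --- this is Cuntz's range computation for simple Cuntz--Krieger algebras, together with its graph-algebra extension to arbitrary countable $K_0$ and free $K_1$; and in the finite case the hypothesis $\rank M(\gsi x)=\rank\cok(i)$ is exactly the constraint (automatic for the kernel and cokernel of a finite square integer matrix) that permits a \emph{finite} choice of $B_{xx}$. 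Then choose nonnegative off-diagonal blocks $B_{yx}$ for the covers $y\to x$ so that, in the six-term sequence attached to the extension $0\to A(\obd x)\to A(\osi x)\to A(\si x)\to 0$, the induced maps agree with the prescribed $i_{\gosi y}^{\gobd x}$ and $\delta_{\gsi x}^{\gobd x}$ of $M$ and so that $K_0(A(\osi x))\cong M(\gosi x)$. Here $A(\obd x)$ is the ideal generated by the $A(\osi y)$, $y\to x$, and its $K_0$ is assembled from those of the blocks $V_y$, $y\ge x$, via the Mayer--Vietoris exactness of Lemma~\ref{lem:exactseq} applied to $\FKsixterm(C^*(E))$; the two defining relations \eqref{eq:Catgunnarrel1} and \eqref{eq:Catgunnarrel2} of an exact $\Catgunnar$-module guarantee that the prescribed maps are compatible enough for such nonnegative blocks to exist, via the standard positivity manoeuvres for realising homomorphisms between cokernels of integer matrices by nonnegative matrices (adding suitable combinations of relation columns of the already-chosen diagonal blocks, which do not change the induced map while allowing all multiplicities to be kept nonnegative and the two-return-paths condition to be preserved).

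With $E$ assembled, $K_0(C^*(E))$ and $K_1(C^*(E))$ are respectively the cokernel and the kernel of $I-B^{\mathrm t}$ on $\mathbb Z^{(E^0)}$, naturally in $E$; the block-triangular shape of $B$ makes the six-term sequences attached to the ideals $A(\osi x)$, $A(\obd x)$ and to the subquotients $A(\si x)$ explicitly computable, and a bookkeeping argument then identifies $\FKgunnar(C^*(E))$ with $M$. For tightness one checks --- using that each block realises a simple subquotient and that edges run only upwards --- that every hereditary saturated subset of $E^0$ has the form $\ell^{-1}(U)$ for a unique open $U\subseteq X$ and that the resulting partial order on $\Prim C^*(E)$ is that of $X$, so that $C^*(E)$ is tight over $X$; Condition~(K) rules out any further ideals. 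Pure infiniteness and regularity are immediate from the construction, and membership in the bootstrap class is automatic for graph \csa{}s and their subquotients. Finally, if $E$ is finite then every vertex is regular, so $C^*(E)$ is a \CK{}; and $E$ may be taken finite exactly under the stated finiteness and rank hypotheses, the converse direction being Proposition~\ref{prop:range}.

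The main obstacle I anticipate is the inductive choice of the off-diagonal blocks $B_{yx}$: they must simultaneously realise the prescribed module maps $i_{\gosi y}^{\gobd x}$ and $\delta_{\gsi x}^{\gobd x}$, leave the block-diagonal (hence simple-subquotient) data untouched, keep $C^*(E)$ tight --- creating no spurious ideals and inducing precisely the topology of $X$ --- and preserve regularity and the two-return-paths property at every vertex. It is the interplay of these positivity, connectivity and exactness constraints, rather than any one of them in isolation, that requires care; the exactness of the $\Catgunnar$-module $M$ and the freeness of the groups $M(\gsi x)$ are exactly the two hypotheses that make it go through.
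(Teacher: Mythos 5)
A preliminary remark: the paper you are working from does not prove this theorem at all --- it is imported verbatim from the companion paper \cite{range_result}*{Theorem~4.8} --- so there is no in-text proof to compare against. Measured against the block-matrix construction that the companion paper actually carries out, your outline is the right one: vertex blocks indexed by the points of $X$ with edges only between comparable blocks, so that hereditary saturated sets correspond to open subsets; diagonal blocks supplied by Cuntz's range computation for simple Cuntz--Krieger algebras and its graph-algebra extension; Condition~(K) from the two-return-paths requirement; the assembly of $\K_0\bigl(A(\obd{x})\bigr)$ from the blocks above $x$ via Lemma~\ref{lem:exactseq}; and the ``only if'' half correctly delegated to Proposition~\ref{prop:range}. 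One small imprecision: the maps $i_{\gosi y}^{\gobd x}$ are not realized by a choice of off-diagonal block --- they are induced by the inclusions $\Z^{(V(\osi{y}))}\hookrightarrow\Z^{(V(\obd{x}))}$ and come out right automatically once the groups are identified, the relation \eqref{eq:Catgunnarrel2} being exactly what makes this consistent.

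The genuine gap sits at the step you yourself flag as the main obstacle, and the tool you offer there does not do the job. At stage $x$ the off-diagonal block $C$ must be chosen so that the snake map $\ker(I-B_{xx}^{\mathrm t})\to\coker(I-B_{\obd{x}}^{\mathrm t})$ equals $\delta_{\gsi x}^{\gobd x}$ \emph{and} so that $\coker(I-B_{\osi{x}}^{\mathrm t})$ is isomorphic to $M(\gosi x)$ \emph{compatibly with} $i_{\gobd x}^{\gosi x}$. Prescribing $\delta$ only exhibits $\coker(I-B_{\osi{x}}^{\mathrm t})$ as some extension of $\cok(i)\cong\coker(I-B_{xx}^{\mathrm t})$ by $\coker(\delta)$; the congruence class of that extension is additional data carried by the module (already for one ideal this is the content of R\o{}rdam's extension results), different integral choices of $C$ inducing the same $\delta$ can yield non-congruent extensions, and one must prove the prescribed one is attained. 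The manoeuvre you invoke --- adding integer combinations of the relation columns of $I-B_{\obd{x}}^{\mathrm t}$ to $C$ --- amounts to multiplying by a unipotent block matrix and therefore changes \emph{none} of the induced $\K$-theoretic data; it is precisely the right device for restoring nonnegativity and for inserting the edges needed for tightness \emph{after} the integral realization is done, but it cannot be the mechanism that realizes $\delta$ together with the extension class in the first place. That integral realization --- presenting $M(\gosi x)$ by generators lifted from $\im(i)$ and from $\cok(i)$ over the free group $\Z^{(V_x)}$, consistently with the already-built presentation of $M(\gobd x)$ --- is the technical heart of the companion paper's proof and is absent from your sketch; the remaining verifications (tightness, pure infiniteness, the finite/Cuntz--Krieger clause) are correctly set up but asserted rather than carried out.
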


In Corollary~\ref{cor:range}, we combine this range-of-invariant theorem with the isomorphism lifting result from the next section.

\section{An intermediate invariant} \label{sec:intermediate}

In this section, we define one more invariant, which, in a sense, can be thought of as a union or join of reduced filtered $\K$\nb-the\-ory $\FKgunnar$ and filtered $\K$\nb-the\-ory restricted to canonical base $\FKtakeshi$. It functions as an intermediate invariant towards concrete filtered $\K$\nb-the\-ory $\FKsixterm$.

Let $X$ be a unique path space.

\begin{definition}
Let $\Catgunnartakeshi$ denote the universal pre-additive category generated by objects $\gsi x$, $\tcsi x$, $\gosi x$ for all $x\in X$ and morphisms $i_{\gsi x}^{\tcsi x}$ for all $x\in X$ and $r_{\tcsi x}^{\tcsi y}$, $\delta_{\tcsi y}^{\gosi x}$ and $i_{\gosi x}^{\gosi y}$ when $x\to y$, subject to the relations
\begin{equation}
\sum_{x\to y}  r_{\tcsi x}^{\tcsi y} \delta_{\tcsi y}^{\gosi x} = \sum_{z\to x} \delta_{\tcsi x}^{\gosi z} i_{\gosi z}^{\gosi x}
\end{equation}
for all $x\in X$ and
\begin{equation}
  \label{eq:RKrelation2}
i_{\gsi x}^{\tcsi x}r_{\tcsi x}^{\tcsi y}=0
\end{equation}
when $x\to y$.

As before, there is a canonical additive functor $\Catgunnartakeshi\to\Catsixterm$, inducing a functor $\Fgunnartakeshi\colon\Mod(\Catsixterm)\to\Mod(\Catgunnartakeshi)$.
Define $\FKgunnartakeshi$ as the composition of $\FKsixterm$ with $\Fgunnartakeshi$.

The category $\Cattakeshi$ embeds into the category $\Catgunnartakeshi$, and a forgetful functor $\Mod(\Catgunnartakeshi)\to\Mod(\Cattakeshi)$ is induced.
We define an additive functor $\FGTgunnar\colon\Mod(\Catgunnartakeshi)\to\Mod(\Catgunnar)$ by
\[
M(\gobd x) = \bigoplus_{y\to x} M(\gosi y)
\]
and $\delta_{\gsi x}^{\gobd x}=(i_{\gsi x}^{\tcsi x} \delta_{\tcsi x}^{\gosi y})$ and otherwise in the obvious way.
\end{definition}

\begin{definition}
A $\Catgunnartakeshi$-module $M$ is called \emph{exact} if the sequences
\begin{equation}
M(\tcsi x) \xrightarrow{\begin{pmatrix} r_{\tcsi x}^{\tcsi y} & -\delta_{\tcsi x}^{\gosi z} \end{pmatrix}} \bigoplus_{x\to y} M(\tcsi y) \oplus \bigoplus_{z\to x} M(\gosi z) \xrightarrow{\begin{pmatrix} \delta_{\tcsi y}^{\gosi x} \\ i_{\gosi z}^{\gosi x} \end{pmatrix}} M(\gosi x)
\end{equation}
\begin{equation}
  \label{eq:RKexatness2}
0 \to M(\gsi x) \xrightarrow{i_{\gsi x}^{\tcsi x}} M(\tcsi x) \xrightarrow{(r_{\tcsi x}^{\tcsi y})} \bigoplus_{x\to y} M(\tcsi y)
\end{equation}
are exact for all $x\in X$.
\end{definition}

\begin{lemma}
 Let $M$ be an exact \rrzero{} $\Catsixterm$-module. Then $\Fgunnartakeshi(M)$ is an exact $\Catgunnartakeshi$-module.
\end{lemma}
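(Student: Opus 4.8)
The plan is to verify the two exactness conditions in the definition of an exact $\Catgunnartakeshi$-module separately. For the first sequence, I would note that the composite $\Cattakeshi\hookrightarrow\Catgunnartakeshi\to\Catsixterm$ coincides with the canonical functor $\Cattakeshi\to\Catsixterm$, so that the restriction of $\Fgunnartakeshi(M)$ along the embedding $\Cattakeshi\hookrightarrow\Catgunnartakeshi$ is precisely $\Ftakeshi(M)$. Since the first exactness condition for a $\Catgunnartakeshi$-module is word-for-word the exactness condition for its underlying $\Cattakeshi$-module, and $M$ is an exact $\Catsixterm$-module, Lemma~\ref{lem:takeshi_exact} yields exactness of this sequence for $\Fgunnartakeshi(M)$ directly; the \rrzero{} hypothesis is not even needed for this part.

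The substance is the second condition, exactness of~\eqref{eq:RKexatness2}. Translating into $\Catsixterm$-language, $\gsi x$ becomes $(\{x\},1)$ and $\tcsi x$ becomes $(\overline{\{x\}},1)$, while $i_{\gsi x}^{\tcsi x}$ becomes the odd component of $i_{\{x\}}^{\overline{\{x\}}}$, the map associated with the relatively open inclusion $\{x\}\subset\overline{\{x\}}$; here $\{x\}$ is relatively open in $\overline{\{x\}}$ because $\overline{\{x\}}\setminus\{x\}=\overline{\partial}\{x\}$ is a union of closures, hence closed. Applying the six-term exactness of the $\Catsixterm$-module $M$ to the boundary pair $\bigl(\{x\},\overline{\partial}\{x\}\bigr)$ in $\overline{\{x\}}$ produces the exact segment
\[
M(\overline{\partial}\{x\},0)\xrightarrow{\ \delta\ }M(\{x\},1)\xrightarrow{\ i\ }M(\overline{\{x\}},1)\xrightarrow{\ r\ }M(\overline{\partial}\{x\},1),
\]
and since $M$ is \rrzero{} the leftmost map $\delta$ vanishes; hence $i$ is injective with $\im i=\ker r$.

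Finally I would identify $r\colon M(\overline{\{x\}},1)\to M(\overline{\partial}\{x\},1)$ with the map $(r_{\tcsi x}^{\tcsi y})_{x\to y}$ occurring in~\eqref{eq:RKexatness2}. Because $X$ is a unique path space, Lemma~\ref{upp} gives the topologically disjoint union $\overline{\partial}\{x\}=\bigsqcup_{x\to y}\overline{\{y\}}$, so relation~\ref{it:biprodcut} of Proposition~\ref{pro:relations} identifies $M(\overline{\partial}\{x\},1)$ with $\bigoplus_{x\to y}M(\overline{\{y\}},1)$ via the restrictions $r_{\overline{\partial}\{x\}}^{\overline{\{y\}}}$; and relation~\ref{it:rr}, applied to the chain of closed subsets $\overline{\{y\}}\subset\overline{\partial}\{x\}\subset\overline{\{x\}}$, shows that under this decomposition the $y$-component of $r_{\overline{\{x\}}}^{\overline{\partial}\{x\}}$ is $r_{\overline{\{x\}}}^{\overline{\{y\}}}=r_{\tcsi x}^{\tcsi y}$. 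Under this identification the displayed exact segment becomes exactly~\eqref{eq:RKexatness2}, so the latter is exact, and the proof is complete. I do not anticipate a genuine obstacle here: the only care required is the bookkeeping that rewrites the abstract generators of $\Catgunnartakeshi$ in $\Catsixterm$-terms together with the use of unique-path-ness to split $\overline{\partial}\{x\}$ as a topologically disjoint union.
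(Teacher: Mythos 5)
Your proof is correct and follows essentially the route the paper intends: the paper's own proof is just the remark that the argument is similar to that of Lemma~\ref{lem:takeshi_exact}, and you fill this in the natural way --- the first exactness condition is literally the $\Cattakeshi$-exactness already handled by Lemma~\ref{lem:takeshi_exact}, while~\eqref{eq:RKexatness2} follows from the six-term sequence of the boundary pair $(\{x\},\overline{\partial}\{x\})$ in $\overline{\{x\}}$, with the incoming index map killed by the \rrzero{} hypothesis and the target split as $\bigoplus_{x\to y}M(\overline{\{y\}},1)$ via Lemma~\ref{upp} and Proposition~\ref{pro:relations}\ref{it:biprodcut}, \ref{it:rr}. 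No gaps.
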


\begin{proof}
The proof is similar to the proof of Lemma \ref{lem:takeshi_exact}. 
\end{proof}

\begin{theorem} \label{thm:gunnartotakeshi}
Assume that $X$ is a unique path space.  Let $M$ and $N$ be exact $\Catgunnartakeshi$-modules with $M(\gsi x)$ and $N(\gsi x)$ free for all non-open points $x\in X$, and let $\phi\colon\FGTgunnar(M)\to\FGTgunnar(N)$ be an $\Catgunnar$-module homomorphism. Then there exists a \textup{(}not necessarily unique\textup{)} $\Catgunnartakeshi$-module homomorphism $\Phi\colon M\to N$ such that $\FGTgunnar(\Phi)=\phi$. If~$\phi$ is an isomorphism then, by construction, so is~$\Phi$.
\end{theorem}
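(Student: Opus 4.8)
The plan is to build $\Phi$ by an induction over the points of $X$, ordered so that each point is treated after all points strictly below it in the specialization order. Since $\FGTgunnar$ forgets exactly the groups $M(\tcsi x)$ together with the morphisms $i_{\gsi x}^{\tcsi x}$, $r_{\tcsi x}^{\tcsi y}$ and $\delta_{\tcsi y}^{\gosi x}$, we set $\Phi=\phi$ on every object $\gsi x$ and $\gosi x$ (hence also on $\gobd x=\bigoplus_{y\to x}\gosi y$), and the task is, for each $x$, to produce $\Phi(\tcsi x)\colon M(\tcsi x)\to N(\tcsi x)$ that is compatible with $\phi$ along $i_{\gsi x}^{\tcsi x}$, along every $r_{\tcsi x}^{\tcsi y}$ with $x\to y$, and along every $\delta_{\tcsi x}^{\gosi z}$ with $z\to x$; keeping this last compatibility at every point is what makes the induction work. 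Two preliminaries: first, iterating the left-exact sequence~\eqref{eq:RKexatness2} and using that a subgroup of a free abelian group is free, one shows along the same induction that $M(\tcsi y)$ is free whenever $y$ is not open --- the cokernel term of~\eqref{eq:RKexatness2} embeds in $\bigoplus_{y\to y'}M(\tcsi{y'})$, which is free by the inductive hypothesis (this is where the freeness assumption on the $M(\gsi\cdot)$ enters), so the sequence splits --- and in particular the image $I_x^M$ of $(r_{\tcsi x}^{\tcsi y})_{x\to y}$ computed in $M$ is a free, hence projective, abelian group for every $x$ (and likewise $I_x^N$ in $N$). Second, the exactness of the six-term-type sequence in the definition of an exact $\Catgunnartakeshi$-module yields an image characterization: a tuple $(m_y)_{x\to y}$ lies in $I_x^M$ if and only if $\sum_{x\to y}m_y\,\delta_{\tcsi y}^{\gosi x}$ lies in $\im i_{\gobd x}^{\gosi x}$ (and similarly for $I_x^N$).

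Now fix $x$ and assume $\Phi(\tcsi y)$ has been constructed with the three compatibilities for all $y<x$. Using the image characterization, the compatibility of $\phi$ with $i_{\gobd x}^{\gosi x}$, and the inductive compatibility with $\delta_{\tcsi y}^{\gosi x}$ at the $y$ with $x\to y$, one checks that $\bigoplus_{x\to y}\Phi(\tcsi y)$ carries $I_x^M$ into $I_x^N$. This gives a morphism between the short exact sequences $0\to M(\gsi x)\to M(\tcsi x)\to I_x^M\to 0$ and $0\to N(\gsi x)\to N(\tcsi x)\to I_x^N\to 0$ extracted from~\eqref{eq:RKexatness2}, with left-hand map $\phi(\gsi x)$ and right-hand map induced by $\bigoplus_{x\to y}\Phi(\tcsi y)$. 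Since $I_x^M$ is projective, the upper sequence splits and the right-hand map lifts through the surjection $N(\tcsi x)\onto I_x^N$; composing a splitting with such a lift yields a homomorphism $\Phi_0(\tcsi x)\colon M(\tcsi x)\to N(\tcsi x)$ compatible with $i_{\gsi x}^{\tcsi x}$ and with every $r_{\tcsi x}^{\tcsi y}$. (For $x$ minimal all sums involved are empty and $\Phi_0(\tcsi x)$ is just the transport of $\phi(\gsi x)$ along $i_{\gsi x}^{\tcsi x}$.)

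The remaining point --- and, I expect, the main obstacle --- is to repair compatibility with the boundary maps $\delta_{\tcsi x}^{\gosi z}$, $z\to x$; these are constraints ``from above'' that the bottom-up construction via~\eqref{eq:RKexatness2} does not control. Let $\epsilon_z\colon M(\tcsi x)\to N(\gosi z)$ measure the failure of compatibility of $\Phi_0(\tcsi x)$ with $\delta_{\tcsi x}^{\gosi z}$. Since $\delta_{\tcsi x}^{\gosi z}\,i_{\gsi x}^{\tcsi x}$ is the $z$-component of $\delta_{\gsi x}^{\gobd x}$ (by the definition of $\FGTgunnar$), the compatibility of $\Phi_0(\tcsi x)$ with $i_{\gsi x}^{\tcsi x}$ and that of $\phi$ with $\delta_{\gsi x}^{\gobd x}$ force $\epsilon_z$ to vanish on $\ker (r_{\tcsi x}^{\tcsi y})_{x\to y}=\im i_{\gsi x}^{\tcsi x}$; hence $\epsilon_z$ descends to $\bar\epsilon_z\colon I_x^M\to N(\gosi z)$. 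A direct computation --- using the first defining relation of $\Catgunnartakeshi$ in both $M$ and $N$, the compatibility of $\Phi_0(\tcsi x)$ with the $r_{\tcsi x}^{\tcsi y}$, the inductive compatibility with $\delta_{\tcsi y}^{\gosi x}$, and the compatibility of $\phi$ with $i_{\gosi z}^{\gosi x}$ --- shows that the map $(\bar\epsilon_z)_{z\to x}\colon I_x^M\to\bigoplus_{z\to x}N(\gosi z)=\FGTgunnar(N)(\gobd x)$ has image inside $\ker i_{\gobd x}^{\gosi x}$. Now $\FGTgunnar(N)$ is exact at $\gobd x$: the inclusion $\im\delta_{\gsi x}^{\gobd x}\subseteq\ker i_{\gobd x}^{\gosi x}$ holds in the $\Catgunnar$-module $\FGTgunnar(N)$ by~\eqref{eq:gunnarrelation1}, and the reverse inclusion follows by a diagram chase from the $\Catgunnartakeshi$-exactness of $N$ (using~\eqref{eq:RKexatness2}). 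So $(\bar\epsilon_z)_z$ factors through $\im\delta_{\gsi x}^{\gobd x}$ and, $I_x^M$ being projective, lifts to $\tilde D\colon I_x^M\to N(\gsi x)$ with $\tilde D\,\delta_{\gsi x}^{\gobd x}=(\bar\epsilon_z)_z$. We then set $\Phi(\tcsi x):=\Phi_0(\tcsi x)-(r_{\tcsi x}^{\tcsi y})_{x\to y}\,\tilde D\,i_{\gsi x}^{\tcsi x}$ (composition read left-to-right): by~\eqref{eq:RKrelation2} the correction term dies after precomposition with $i_{\gsi x}^{\tcsi x}$ and after postcomposition with any $r_{\tcsi x}^{\tcsi y}$, so it does not disturb the compatibilities already achieved, while by construction it cancels each $\epsilon_z$. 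Thus $\Phi(\tcsi x)$ has all three compatibilities, which completes the inductive step.

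Once the induction is done it is routine that the $\Phi(\tcsi x)$ together with $\phi$ assemble into a $\Catgunnartakeshi$-module homomorphism with $\FGTgunnar(\Phi)=\phi$. If $\phi$ is an isomorphism then so is $\Phi$: running the same induction and applying the Five Lemma to the morphism of short exact sequences from the second paragraph, it suffices to see that the map $I_x^M\to I_x^N$ induced by $\bigoplus_{x\to y}\Phi(\tcsi y)$ is an isomorphism --- injectivity is clear, and surjectivity follows from the image characterization together with the inductive compatibilities and the compatibility of $\phi$ with $i_{\gobd x}^{\gosi x}$.
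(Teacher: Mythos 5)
Your proof is correct, and its skeleton coincides with the paper's: the same induction up the specialization order starting from the closed (minimal) points, the same bootstrapping of freeness of $M(\tcsi y)$ along \eqref{eq:RKexatness2} (which is exactly where the hypothesis on non-open points enters), a splitting of $M(\tcsi x)$ into $\im i_{\gsi x}^{\tcsi x}$ plus a projective complement, and the Five Lemma for the isomorphism claim. Where you genuinely diverge is in the mechanics of the inductive step. The paper obtains compatibility with the restriction maps \emph{and} with the boundary maps $\delta_{\tcsi x}^{\gosi z}$ in a single stroke: it lifts the composite of $\bigl(r_{\tcsi x}^{\tcsi y},-\delta_{\tcsi x}^{\gosi z}\bigr)$ with $\bigl(\Phi_{\tcsi y},\phi_{\gosi z}\bigr)$, restricted to a free complement $V$ of $\im i_{\gsi x}^{\tcsi x}$, through the exact sequence $N(\tcsi x)\to\bigoplus N(\tcsi y)\oplus\bigoplus N(\gosi z)\to N(\gosi x)$; commutativity of the right-hand square (which uses precisely the inductive $\delta$-compatibilities you also maintain) puts the map to be lifted into the image of $N(\tcsi x)$, and freeness of $V$ does the rest. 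You instead first arrange compatibility with $i$ and the $r$'s only, then measure the $\delta$-defect $\epsilon_z$, show it descends to the image $I_x^M$ and lands in $\ker i_{\gobd x}^{\gosi x}=\im\delta_{\gsi x}^{\gobd x}$, and cancel it by a second projective lift, the correction term being invisible to $i$ and to the $r$'s by \eqref{eq:RKrelation2}. This defect-and-correction route is valid --- the computation showing that $(\bar\epsilon_z)_z$ lands in $\ker i_{\gobd x}^{\gosi x}$ and the diagram chase establishing exactness of $\FGTgunnar(N)$ at $\gobd x$ both go through --- but it costs an extra lift and an extra exactness lemma, whereas the paper's joint lift buys both compatibilities for the price of one. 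The only blemish is notational: with composition read left-to-right, the $z$-component of $\delta_{\gsi x}^{\gobd x}$ is $i_{\gsi x}^{\tcsi x}\delta_{\tcsi x}^{\gosi z}$, not the reverse; this does not affect the argument.
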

\begin{proof}
For $x\in X$, we define $\Phi_{x_1}=\phi_{\gsi x}$ and $\Phi_{{\gosi x}} = \phi_{{\gosi x}}$. In the following, we will define $\Phi_{{\tcsi x}}$ by induction on the partial order of $X$ in a way such that the relations
\begin{equation}
  \label{eq:proofgunnartotakeshi0a}
r_{\tcsi x}^{\tcsi y} \Phi_{\tcsi y}=\Phi_{\tcsi x} r_{\tcsi x}^{\tcsi y},
\end{equation}
\begin{equation}
  \label{eq:proofgunnartotakeshi0b}
\delta_{\tcsi x}^{\gosi z} \Phi_{\gosi z}=\Phi_{\tcsi x} \delta_{\tcsi x}^{\gosi z}
\end{equation}
\begin{equation}
  \label{eq:proofgunnartotakeshi0c}
i_{\gsi x}^{\tcsi x} \Phi_{\tcsi x}=\Phi_{\gsi x} i_{\gsi x}^{\tcsi x}
\end{equation}
hold for all $y$ with $x\to y$ and all $z$ with $z\to x$. For closed points $x\in X$, we set
\[
 \Phi_{{\tcsi x}}=i_{\gsi x}^{{\tcsi x}}\phi_{x_1}\left(i_{x_1}^{{\tcsi x}}\right)^{-1}.
\]
Here we have used that, by exactness of \eqref{eq:RKexatness2}, $i_{x_1}^{{\tcsi x}}$ is invertible as there is no $y$ with $x\to y$. While the condition \eqref{eq:proofgunnartotakeshi0a} is empty, \eqref{eq:proofgunnartotakeshi0b} is guaranteed by $\phi$ being an $\Catgunnar$-module homomorphism,
and  \eqref{eq:proofgunnartotakeshi0c}  holds by construction.

Now fix an element $w\in X$ and assume that $\Phi_{\tcsi x}$ is defined for all $x<w$ in a way such that \eqref{eq:proofgunnartotakeshi0a} and \eqref{eq:proofgunnartotakeshi0b} hold. Using the exact sequence \eqref{eq:RKexatness2} and the freeness of $\displaystyle\bigoplus_{w\to x} M(\tcsi w)$, we can choose a free subgroup $V\subseteq M(\tcsi w)$ such that $M(\tcsi w)$ decomposes as an inner direct sum
\[
M(\tcsi w)=V\oplus M(\gsi w)\cdot i_{w_1}^{{\tcsi w}}.
\]
We will define $\Phi_{\tcsi w}$ by specifying the two restrictions $\Phi_{\tcsi w}|_V$ and $\Phi_{\tcsi w}|_{M(\gsi w)\cdot i_{w_1}^{{\tcsi w}}}$. Consider the diagram
\begin{equation}
  \label{eq:proofgunnartotakeshi1}
\begin{split}
\xymatrix@C+25pt{
V\ar@{ >->}[r]\ar@{..>}[rd] & M(\tcsi x) \ar[r]^-{\scriptstyle{\begin{pmatrix} r_{\tcsi x}^{\tcsi y}, -\delta_{\tcsi x}^{\gosi z} \end{pmatrix}}} &
\displaystyle\bigoplus_{x\to y} M(\tcsi y) \oplus \displaystyle\bigoplus_{z\to x} M(\gosi z) \ar[r]^-{\scriptstyle{\begin{pmatrix} \delta_{\tcsi y}^{\gosi x} \\ i_{\gosi z}^{\gosi x} \end{pmatrix}}} \ar[d]^{\bigl(\left(\Phi_{\tcsi y}\right),\left( \Phi_{\gosi z}\right)\bigr)}
& M(\gosi x) \ar[d]^{\Phi_{\gosi x}} \\
& N(\tcsi x) \ar[r]_-{\scriptstyle{\begin{pmatrix} r_{\tcsi x}^{\tcsi y}, -\delta_{\tcsi x}^{\gosi z} \end{pmatrix}}} &
\displaystyle\bigoplus_{x\to y} N(\tcsi y) \oplus \displaystyle\bigoplus_{z\to x} N(\gosi z) \ar[r]_-{\scriptstyle{\begin{pmatrix} \delta_{\tcsi y}^{\gosi x} \\ i_{\gosi z}^{\gosi x} \end{pmatrix}}}
& N(\gosi x)
}
\end{split}
\end{equation}
By assumption, the bottom row of this diagram is exact, the top row is exact in $\displaystyle\bigoplus_{x\to y} M(\tcsi y) \oplus \displaystyle\bigoplus_{z\to x} M(\gosi z)$, and the right-hand square commutes. We can therefore choose a homomorphism $\Phi_{\tcsi x}|_V\colon V\to N(\tcsi x)$ such that the left-hand pentagon commutes.

By exactness of \eqref{eq:RKexatness2}, $i_{x_1}^{{\tcsi x}}$ is injective. Its corestriction onto its image $M(\gsi x)\cdot i_{x_1}^{{\tcsi x}}$ is thus an isomorphism. We may therefore define the restriction $\Phi_{\tcsi x}|_{M(\gsi x)\cdot i_{x_1}^{{\tcsi x}}}$ in the unique way that makes the following diagram commute:
\begin{equation}
  \label{eq:proofgunnartotakeshi2}
\begin{split}
\xymatrix{
M(\gsi x)\ar[r]^-{i_{x_1}^{{\tcsi x}}}\ar[d]^{\phi_{\gsi x}} & M(\gsi x)\cdot i_{x_1}^{{\tcsi x}}\ar[d]^{\Phi_{\tcsi x}|_{M(\gsi x)\cdot i_{x_1}^{{\tcsi x}}}} \\
N(\gsi x)\ar[r]^-{i_{x_1}^{{\tcsi x}}} & N(\gsi x)\cdot i_{x_1}^{{\tcsi x}}
}
\end{split}
\end{equation}
We have to check that $\Phi_{\tcsi w}=(\Phi_{\tcsi w}|_V,\Phi_{\tcsi w}|_{M(\gsi w)\cdot i_{w_1}^{{\tcsi w}}})$ fulfills \eqref{eq:proofgunnartotakeshi0a} and \eqref{eq:proofgunnartotakeshi0b} (with $x$ replaced with $w$). This is true on $V$ because of the commutativity of the left-hand side of \eqref{eq:proofgunnartotakeshi1}. It is also true on the second summand: by \eqref{eq:RKrelation2}, both sides of \eqref{eq:proofgunnartotakeshi0a} vanish on this subgroup; \eqref{eq:proofgunnartotakeshi0b} follows again from $\phi$ being an $\Catgunnar$-module homomorphism; and  \eqref{eq:proofgunnartotakeshi0c} holds by construction. This completes the induction step.

The claim, that $\Phi$ is an isomorphism whenever $\phi$ is, follows from a repeated application of the Five Lemma.
\end{proof}

\begin{corollary} \label{cor:gunnartosixterm}
Assume that $X$ is an EBP space.  Let $M$ and $N$ be exact, \rrzero{} $\Catsixterm$-modules with $M(\si x,1)$ and $N(\si x,1)$ free for all non-open points $x\in X$, and let $\phi\colon\Fgunnar(M)\to\Fgunnar(N)$ be an $\Catgunnar$-module homomorphism. Then there exists a \textup{(}not necessarily unique\textup{)} $\Catsixterm$-module homomorphism $\Phi\colon M\to N$ satisfying $\Fgunnar(\Phi)=\phi$. If~$\phi$ is an isomorphism then, by construction, so is~$\Phi$.
\end{corollary}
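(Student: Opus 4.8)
The plan is to factor the lift through the intermediate category $\Catgunnartakeshi$ of Section~\ref{sec:intermediate}: first lift $\phi$ to a homomorphism over $\Catgunnartakeshi$ using Theorem~\ref{thm:gunnartotakeshi}, and then descend from $\Catgunnartakeshi$ to $\Catsixterm$ using Theorem~\ref{thm:sixterm_to_takeshi}. All of the genuine content lies in those two theorems; what is left is to check that the reduction functors $\Fgunnar$, $\Ftakeshi$, $\Fgunnartakeshi$ fit together. The two compatibilities I would first record are that the forgetful functor $U\colon\Mod(\Catgunnartakeshi)\to\Mod(\Cattakeshi)$ satisfies $U\circ\Fgunnartakeshi=\Ftakeshi$---immediate, since both sides restrict an $\Catsixterm$-module along $\Cattakeshi\hookrightarrow\Catgunnartakeshi\to\Catsixterm$---and that $\FGTgunnar\circ\Fgunnartakeshi\cong\Fgunnar$ naturally, via the biproduct identification $M(\ob \{x\},0)\cong\bigoplus_{y\to x}M(\osi y,0)$ attached to the decomposition $\ob \{x\}=\bigsqcup_{y\to x}\osi y$, which holds because an EBP space is a unique path space (Lemma~\ref{upp}) and because $\FGTgunnar$ was defined precisely to match $\delta_{\gsi x}^{\gobd x}$ with $(i_{\gsi x}^{\tcsi x}\delta_{\tcsi x}^{\gosi y})_y$.

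The one new ingredient I would isolate is that $U$ restricts to an equivalence from the category of exact $\Catgunnartakeshi$-modules to the category of exact $\Cattakeshi$-modules. This follows from exactness of the sequence~\eqref{eq:RKexatness2}: it exhibits $M(\gsi x)$ as the kernel of $(r_{\tcsi x}^{\tcsi y})\colon M(\tcsi x)\to\bigoplus_{x\to y}M(\tcsi y)$ and $i_{\gsi x}^{\tcsi x}$ as the inclusion, so the $\gsi$-groups of an exact $\Catgunnartakeshi$-module, and the action of any module homomorphism on them, are determined by the underlying $\Cattakeshi$-module (whence $U$ is full and faithful on exact modules), while adjoining these kernels to an arbitrary exact $\Cattakeshi$-module produces an exact $\Catgunnartakeshi$-module (essential surjectivity). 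Combining this with $U\circ\Fgunnartakeshi=\Ftakeshi$, with Theorem~\ref{thm:sixterm_to_takeshi}---which makes $\Ftakeshi$ an equivalence between exact \rrzero{} $\Catsixterm$-modules and exact $\Cattakeshi$-modules---and with the two-out-of-three property for equivalences, I get that $\Fgunnartakeshi$ itself restricts to an equivalence from exact \rrzero{} $\Catsixterm$-modules onto exact $\Catgunnartakeshi$-modules.

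The proof then assembles quickly. The modules $\Fgunnartakeshi(M)$ and $\Fgunnartakeshi(N)$ are exact $\Catgunnartakeshi$-modules (since $\Fgunnartakeshi$ sends exact \rrzero{} $\Catsixterm$-modules to exact $\Catgunnartakeshi$-modules), and their $\gsi$-components are $M(\si x,1)$ and $N(\si x,1)$, free at non-open points by hypothesis. Transporting $\phi$ across the natural isomorphism $\FGTgunnar\circ\Fgunnartakeshi\cong\Fgunnar$ turns it into an $\Catgunnar$-module homomorphism $\FGTgunnar(\Fgunnartakeshi(M))\to\FGTgunnar(\Fgunnartakeshi(N))$, which by Theorem~\ref{thm:gunnartotakeshi} lifts to a $\Catgunnartakeshi$-module homomorphism $\Psi\colon\Fgunnartakeshi(M)\to\Fgunnartakeshi(N)$ with $\FGTgunnar(\Psi)$ equal to it, $\Psi$ being an isomorphism when $\phi$ is. As $\Fgunnartakeshi$ is fully faithful on the categories in play, $\Psi=\Fgunnartakeshi(\Phi)$ for a unique $\Catsixterm$-module homomorphism $\Phi\colon M\to N$, and $\Phi$ is an isomorphism whenever $\Psi$ is, equivalences reflecting isomorphisms. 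Finally $\Fgunnar(\Phi)\cong\FGTgunnar(\Fgunnartakeshi(\Phi))=\FGTgunnar(\Psi)$ corresponds to $\phi$ under the natural isomorphism, and by naturality this upgrades to the on-the-nose equality $\Fgunnar(\Phi)=\phi$.

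I expect the main obstacle to be the equivalence assertion of the second paragraph---pinning down $U$ and hence $\Fgunnartakeshi$ precisely on the exact subcategories---together with the care needed so that the final conclusion is the literal equality $\Fgunnar(\Phi)=\phi$ required by the statement and not merely an isomorphism. Everything else is formal, resting on Theorems~\ref{thm:gunnartotakeshi} and~\ref{thm:sixterm_to_takeshi}.
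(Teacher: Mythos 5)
Your proposal is correct and follows exactly the route the paper intends: its entire proof of this corollary is the one line ``Combine Theorems~\ref{thm:gunnartotakeshi} and~\ref{thm:sixterm_to_takeshi},'' and your argument is a careful unpacking of that combination, including the compatibilities $U\circ\Fgunnartakeshi=\Ftakeshi$ and $\FGTgunnar\circ\Fgunnartakeshi\cong\Fgunnar$ and the observation that exactness of \eqref{eq:RKexatness2} makes the $\gsi$-groups redundant data. The details you supply (in particular the equivalence between exact $\Catgunnartakeshi$-modules and exact $\Cattakeshi$-modules) are accurate and are precisely what the paper leaves implicit.
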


\begin{proof}
 Combine Theorems \ref{thm:gunnartotakeshi} and \ref{thm:sixterm_to_takeshi}.
\end{proof}

\begin{corollary} \label{cor:gunnartosixterm_cstar}
Let $A$ and $B$ be \csa s of real rank zero over an EBP space~$X$, and assume that $\K_1\bigl(A(x)\bigr)$ and $\K_1\bigl(B(x)\bigr)$ are free abelian groups for all non-open points $x\in X$. Then for any homomorphism $\phi\colon\FKgunnar(A)\to\FKgunnar(B)$, there exist a \textup{(}not necessarily unique\textup{)} homomorphism $\Phi\colon\FKsixterm(A)\to\FKsixterm(B)$ for which $\Fgunnar(\Phi)=\phi$. If~$\phi$ is an isomorphism then, by construction, so is~$\Phi$.
\end{corollary}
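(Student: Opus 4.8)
The plan is to deduce this from the purely module-theoretic Corollary~\ref{cor:gunnartosixterm}, applied to the $\Catsixterm$-modules $M=\FKsixterm(A)$ and $N=\FKsixterm(B)$. The first task is to check that $M$ and $N$ satisfy the hypotheses of that corollary. By Remark~\ref{rem:K0lift}, $\FKsixterm(A)$ is exact, and it is \rrzero{} because $A$ has real rank zero (real rank zero passes to all subquotients $A(Y)$, so all the relevant exponential maps $\K_0\bigl(A(Y\setminus U)\bigr)\to\K_1\bigl(A(U)\bigr)$ vanish); the same holds for $B$. Next, for a non-open point $x\in X$ we have $M(\si x,1)=\FKsixterm(A)(\si x,1)=\K_1\bigl(A(x)\bigr)$, which is free by assumption, and likewise $N(\si x,1)=\K_1\bigl(B(x)\bigr)$ is free, so the freeness hypothesis of Corollary~\ref{cor:gunnartosixterm} is met.

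It then remains to observe that $\phi$ has the right source and target: since $\FKgunnar$ is by definition the composition of $\FKsixterm$ with $\Fgunnar$, we have $\Fgunnar(M)=\FKgunnar(A)$ and $\Fgunnar(N)=\FKgunnar(B)$, so $\phi\colon\FKgunnar(A)\to\FKgunnar(B)$ is precisely an $\Catgunnar$-module homomorphism $\Fgunnar(M)\to\Fgunnar(N)$. Applying Corollary~\ref{cor:gunnartosixterm} now yields an $\Catsixterm$-module homomorphism $\Phi\colon M\to N$, that is, $\Phi\colon\FKsixterm(A)\to\FKsixterm(B)$, with $\Fgunnar(\Phi)=\phi$, and $\Phi$ is an isomorphism whenever $\phi$ is.

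Since Corollary~\ref{cor:gunnartosixterm} (resting in turn on Theorems~\ref{thm:gunnartotakeshi} and~\ref{thm:sixterm_to_takeshi}) carries all the real content, there is no serious obstacle here; the only point that requires a moment's care is the bookkeeping translation between the $C^*$-algebraic hypotheses on $A$ and $B$ and the module-theoretic hypotheses of Corollary~\ref{cor:gunnartosixterm}, carried out via Remark~\ref{rem:K0lift} together with the inheritance of real rank zero by subquotients.
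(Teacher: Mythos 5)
Your proposal is correct and is essentially the paper's own (implicit) argument: the corollary is intended as an immediate application of Corollary~\ref{cor:gunnartosixterm} to $M=\FKsixterm(A)$ and $N=\FKsixterm(B)$, with the hypotheses verified exactly as you do via Remark~\ref{rem:K0lift}, the vanishing of the exponential maps for subquotients of a real-rank-zero algebra, and the identity $\Fgunnar\circ\FKsixterm=\FKgunnar$.
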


\begin{corollary} \label{cor:range}
Let $A$ be a \csa{} over~$X$ with real rank zero, and assume that $\K_1\bigl(A(x)\bigr)$ is free for all $x\in X$.
Then there exists a purely infinite graph \csa{} $C^*(E)$ that is tight over~$X$ and satisfies $\FKgunnar\bigl(C^*(E)\bigr)\cong\FKgunnar(A)$.
If $X$ is an EBP space, then automatically $\FKsixterm\bigl(C^*(E)\bigr)\cong\FKsixterm(A)$.

If furthermore for all $x\in X$, the group $\K_*\bigl(A(x)\bigr)$ is finitely generated and $\rank\K_1\bigl(A(x)\bigr)=\rank\K_0\bigl(A(x)\bigr)$, then $C^*(E)$ can be chosen to be a purely infinite Cuntz--Krie\-ger algebra.
\end{corollary}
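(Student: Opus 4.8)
The plan is to assemble the statement from three already-available ingredients: the range result Theorem~\ref{thm:range}, the lifting result Corollary~\ref{cor:gunnartosixterm_cstar}, and the observation of Remark~\ref{rem:K0lift}. Since $A$ has real rank zero, Remark~\ref{rem:K0lift} shows that $\FKsixterm(A)$ is an exact, \rrzero{} $\Catsixterm$-module; hence by Corollary~\ref{cor:STexactR} the module $M:=\FKgunnar(A)=\Fgunnar\bigl(\FKsixterm(A)\bigr)$ is an exact $\Catgunnar$-module, and $M(\gsi x)=\FK_{\si x}^1(A)=\K_1\bigl(A(x)\bigr)$ is free for every $x\in X$ by hypothesis. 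Theorem~\ref{thm:range} then produces a countable graph $E$, all of whose vertices are regular and support at least two cycles, such that $C^*(E)$ is tight over~$X$, purely infinite, and $\FKgunnar\bigl(C^*(E)\bigr)\cong M=\FKgunnar(A)$. This gives the first assertion.

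Next, to upgrade the conclusion from $\FKgunnar$ to $\FKsixterm$ when $X$ is an EBP space, I would apply Corollary~\ref{cor:gunnartosixterm_cstar} to the isomorphism $\FKgunnar(A)\to\FKgunnar\bigl(C^*(E)\bigr)$ just obtained. Its hypotheses hold: $A$ has real rank zero and $\K_1\bigl(A(x)\bigr)$ is free for all $x\in X$ by assumption, while $C^*(E)$ is a purely infinite graph \csa{}, hence of real rank zero by~\cite{hongszymanski}, with $\FK_{\si x}^1\bigl(C^*(E)\bigr)$ free for all $x\in X$ by Proposition~\ref{prop:range}. The corollary therefore produces an isomorphism $\FKsixterm(A)\to\FKsixterm\bigl(C^*(E)\bigr)$.

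For the Cuntz--Krieger statement, it remains to check that $M=\FKgunnar(A)$ meets the finiteness criterion in Theorem~\ref{thm:range}. The group $M(\gsi x)=\K_1\bigl(A(x)\bigr)$ is finitely generated by hypothesis; writing $A(\osi x)$ as an iterated extension of the algebras $A(\{y\})$, $y\in\osi x$, and applying the six-term exact sequences of $\FKsixterm(A)$, a straightforward induction shows that $M(\gosi x)=\K_0\bigl(A(\osi x)\bigr)$ is finitely generated too. Finally, since $\FKsixterm(A)$ is \rrzero{}, the exponential map $\K_0\bigl(A(x)\bigr)\to\K_1\bigl(A(\obd x)\bigr)$ of the extension $0\to A(\obd x)\to A(\osi x)\to A(x)\to 0$ vanishes, so the six-term sequence yields the exact sequence
\[
M(\gobd x)=\K_0\bigl(A(\obd x)\bigr)\xrightarrow{\ i\ }\K_0\bigl(A(\osi x)\bigr)=M(\gosi x)\xrightarrow{\ r\ }\K_0\bigl(A(x)\bigr)\longrightarrow 0,
\]
whence $\cok\bigl(i\colon M(\gobd x)\to M(\gosi x)\bigr)\cong\K_0\bigl(A(x)\bigr)$; using the hypothesis $\rank\K_1\bigl(A(x)\bigr)=\rank\K_0\bigl(A(x)\bigr)$ we then obtain $\rank M(\gsi x)=\rank\cok(i)$ for all $x\in X$. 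By the finiteness clause of Theorem~\ref{thm:range}, $E$ may then be chosen finite, and in that case $C^*(E)$ is a purely infinite Cuntz--Krieger algebra; if moreover $X$ is an EBP space, the isomorphism $\FKsixterm\bigl(C^*(E)\bigr)\cong\FKsixterm(A)$ of the previous paragraph still applies.

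I expect the only real work to lie in the last paragraph---translating the abstract finite-generation and rank conditions on the $\Catgunnar$-module $M$ into the $\K$-theoretic hypotheses on~$A$ by means of the \rrzero{} six-term sequences; everything else is a formal combination of the quoted results, provided one verifies that the hypotheses of Corollary~\ref{cor:gunnartosixterm_cstar}---real rank zero and freeness of the $\K_1$-groups of the pertinent subquotients---hold for both $A$ and $C^*(E)$.
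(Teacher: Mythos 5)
Your proposal is correct and follows exactly the route the paper takes: its proof of Corollary~\ref{cor:range} is the one-line instruction to combine Theorem~\ref{thm:range} with Corollaries~\ref{cor:STexactR} and~\ref{cor:gunnartosixterm_cstar}, and your argument simply fills in the details of that combination (including the correct use of Remark~\ref{rem:K0lift} and the vanishing of the exponential maps to translate the rank hypothesis into the cokernel condition of Theorem~\ref{thm:range}).
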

\begin{proof}
Combine Theorem~\ref{thm:range} with Corollary~\ref{cor:STexactR} and Corollary~\ref{cor:gunnartosixterm_cstar}.
\end{proof}

\subsection{The particular case of the four-point space \texorpdfstring{$\mathcal D$}{D}}
Consider the space $\mathcal D=\{1,2,3,4 \}$ defined by $4\to 3,4\to 2,3\to1,2\to1$.  The space $\mathcal D$ is not a unique path space. The second-named author showed in~\cite{bentmann} that there exists a finite refinement $\FK'$ of filtered $\K$\nb-the\-ory $\FK$ given by adding a \csa{} $R_{1\setminus 4}$ over~$\mathcal D$ to the collection $(R_Y)_{Y\in\LC(\mathcal D)^*}$ of representing objects, creating a larger category $\NT'$.
By~\cite{bentmann}*{Theorem 6.2.14}, isomorphisms on the refined filtered $\K$\nb-the\-ory $\FK'$ lift to $\KK(\mathcal D)$-equivalences, and thereby (using~\cite{kirchberg}) to $\mathcal D$-equivariant \Star{}iso\-mor\-phisms, for stable Kirchberg $\mathcal D$-algebras with all simple subquotients in the bootstrap class.
However, there exist two non-isomorphic stable Kirchberg $\mathcal D$-algebras $A$ and~$B$ with real rank zero and simple subquotients in the bootstrap class such that $\FK(A)\cong\FK(B)$, see~\cites{arr,bentmann}.

\begin{proposition} \label{prop:diamond}
Let $A$ and $B$ be \csa s over $\mathcal D$, assume that $A$ and $B$ have real rank zero, and assume that $\K_1\bigl(A(x)\bigr)$ and $\K_1\bigl(B(x)\bigr)$ are free abelian groups for all $x\in\{1,2,3\}$.
Then any homomorphism $\phi\colon\FKgunnar(A)\to\FKgunnar(B)$ extends \textup{(}non-uniquely\textup{)} to a homomorphism $\Phi\colon\FK'(A)\to\FK'(B)$.
If $\phi$ is an isomorphism, then $\Phi$ is by construction an isomorphism.
\end{proposition}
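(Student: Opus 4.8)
The plan is to adapt the proofs of Theorems~\ref{thm:gunnartotakeshi} and~\ref{thm:sixterm_to_takeshi}. Those results assume that~$X$ is an EBP space, hence in particular a unique path space; the space~$\mathcal D$ is neither, but the refined category~$\NT'$ is designed precisely so that the boundary decomposition underlying those proofs can still be carried through --- the elementary boundary pairs of~$\mathcal D$ that are not of the form $(\widetilde{\{x\}},\overline{\{y\}})$ with $x\to y$, such as $(\{2,3,4\},\{1\})$, being resolved with the help of the extra object $R_{1\setminus 4}$. What one may use as input is that $\FK(A)$ and $\FK(B)$ are exact and \rrzero{} $\Catsixterm$-modules (Remark~\ref{rem:K0lift}; recall that $\Catsixterm\to\NT$ is an isomorphism for the four-point space~$\mathcal D$, so that $\FK=\FKsixterm$ here), that $\FK'(A)$ and $\FK'(B)$ satisfy the refined exactness properties of~\cite{bentmann}, and that $\K_1\bigl(A(x)\bigr)$ and $\K_1\bigl(B(x)\bigr)$ are free for the three non-open points $x\in\{1,2,3\}$ of~$\mathcal D$. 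Starting from~$\phi$ on the $\Catgunnar$-part, I would construct a lift~$\Phi$ on all of~$\FK'$ by a sequence of choices, in the manner of the proof of Theorem~\ref{thm:gunnartotakeshi}.

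First I would reconstruct the even part of~$\Phi$. For~$\mathcal D$ every open subset already occurs among the sets $\widetilde{\{x\}}$ and $\widetilde{\partial}\{x\}$, so all groups $\FK_U^0$ with~$U$ open, together with the maps $i_{\gosi y}^{\gobd x}$ and $i_{\gobd x}^{\gosi x}$, belong to~$\FKgunnar$, and~$\phi$ respects the resulting cosheaf structure by the relations~\eqref{eq:gunnarrelation1} and~\eqref{eq:gunnarrelation2}. Passing to cokernels exactly as in the proof of Theorem~\ref{thm:sixterm_to_takeshi}, and using that $\FK(A)$ and $\FK(B)$ are \rrzero{}, this determines the even component of~$\Phi$ on $\FK_Y^0$ for every $Y\in\LC(\mathcal D)$, compatibly with all even $i$- and $r$-transformations.

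Next I would reconstruct the odd part, inductively along the specialization order of~$\mathcal D$, starting at the closed point~$1$. For a point~$x$ the group $\FK_{\overline{\{x\}}}^1$ sits in a short exact sequence with sub~$\FK_{\{x\}}^1$ and quotient $\ker\bigl(\delta\colon\FK_{\overline{\partial}\{x\}}^1\to\FK_{\{x\}}^0\bigr)$, arising from the extension $0\to A(x)\to A(\overline{\{x\}})\to A(\overline{\partial}\{x\})\to0$ (the other boundary map vanishes since $A,B$ are \rrzero{}). The group $\FK_{\overline{\partial}\{x\}}^1$ has, by the sheaf and kernel reconstruction in the proof of Theorem~\ref{thm:sixterm_to_takeshi}, already been built from data treated at earlier stages and is free, being assembled from the groups $\FK_{\{y\}}^1$ with $y<x$; and compatibility of~$\phi$ with the relevant maps follows by naturality from the $\Catgunnar$-morphisms together with the already-constructed even part. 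Since $\K_1\bigl(A(x)\bigr)$ is free for $x\in\{1,2,3\}$, the short exact sequence splits, and I may choose a compatible homomorphism on $\FK_{\overline{\{x\}}}^1$; the sheaf and kernel procedure then extends this to $\FK_Y^1$ for all $Y\in\LC(\mathcal D)$. Interleaved with the top of this induction I would construct~$\Phi$ on the extra $R_{1\setminus 4}$-component, again choosing a splitting of the relevant extension --- legitimate because $\K_1\bigl(A(\{1\})\bigr)$ is free --- so as to make~$\Phi$ intertwine the new transformations to and from $R_{\{1\}}$, $R_{\{4\}}$ and $R_{\mathcal D}$; this is needed to handle the index maps near the top of the poset, which are invisible to $\FKgunnar$ alone.

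It then remains to verify that~$\Phi$ intertwines \emph{all} morphisms of~$\NT'$, and this is where I expect the main difficulty. For the \rrzero{} ($0$-to-$1$) component of each boundary map there is nothing to check; for the index ($1$-to-$0$) components, and above all for the genuinely new morphisms through $R_{1\setminus 4}$, one needs a boundary decomposition in~$\NT'$ analogous to the relation~\eqref{eq:cond_of_delta} of Lemma~\ref{toplemma}, expressing each such morphism through morphisms already shown to be respected; that such a decomposition holds in~$\NT'$ --- and not merely in some abstract completion, as can happen for the space~$Q$ of Remark~\ref{rem:Q} --- should be extracted from the analysis of~$\NT'$ in~\cite{bentmann}, and, $\mathcal D$ having only four points, it can also be checked by direct computation. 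Granting this, one obtains~$\Phi$ with $\FKgunnar(\Phi)=\phi$, and a repeated application of the Five Lemma along the exact sequences used above shows that~$\Phi$ is an isomorphism whenever~$\phi$ is. The choices of splittings made along the way render~$\Phi$ non-unique, exactly as in Theorem~\ref{thm:gunnartotakeshi}.
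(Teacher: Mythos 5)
Your proposal is correct and follows essentially the same strategy as the paper's proof: use real rank zero to kill the exponential maps, rebuild the even groups $\FK_Y^0$ as cokernels of inclusion maps already present in $\FKgunnar$, rebuild the odd groups one at a time by splitting off a free complement (free because it embeds into some $\K_1\bigl(A(x)\bigr)$ with $x\in\{1,2,3\}$) and lifting against an exact sequence, treat $\FK_{1\setminus 4}^1$ and $\FK_{1\setminus 4}^0$ by the same devices, and finish with the Five Lemma. The one genuine difference is how the final compatibility check is handled. You defer the verification that $\Phi$ intertwines all of $\NT'$ to a boundary-decomposition identity analogous to \eqref{eq:cond_of_delta}, to be extracted from \cite{bentmann} or checked by hand. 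The paper avoids needing any such lemma: it invokes the explicit presentation of $\NT'$ over $\mathcal D$ from \cite{bentmann}*{Section 6.2.5}, observes that once the vanishing exponential maps are removed the whole refined invariant collapses to a single braided exact spiral (from $4_1$ through $1\setminus 4_1$ to $1_0$) together with the group $1\setminus 4_0$, and then constructs the odd components in the order $1\setminus 4_1$, $12_1$, $13_1$, $123_1$, $1234_1$, $234_1$, $34_1$, $24_1$, arranging commutation with every generating morphism at the moment each component is defined, so there is nothing left to verify afterwards. This is precisely why the argument goes through for $\mathcal D$ even though no analogue of Lemma~\ref{toplemma} is available there; your induction up the specialization order from the closed point~$1$ would also work, but only after carrying out the extra check you flag, which the paper's ordering renders unnecessary.
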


\begin{proof}
By Section~6.2.5 of~\cite{bentmann}, the refined filtered $\K$\nb-the\-ory $\FK'$ consists of the following groups and maps:
\[
\xymatrix{
& 12\ar[dr]^-{f_{12}} & & 34\ar[dr]^-i & & 3\ar[dr]^-i & \\
123 \ar[ur]^-r \ar[dr]^-r \ar[r]|-\circ^-\delta & 4 \ar[r]|-\circ^-{f_{4}} & 1\setminus 4 \ar[ur]|-\circ^-{f_{34}} \ar[dr]|-\circ^-{f_{24}} \ar[r]^-{f_{1}} & 1 \ar[r]|-\circ^-\delta & 234\ar[ur]^-r\ar[dr]^-r  \ar[r]^-i & 1234 \ar[r]^-r & 123 \\
& 13\ar[ur]^-{f_{13}} & & 24\ar[ur]^-i & & 2\ar[ur]^-i & 
}
\]
The proof of~\cite{arr}*{Lemma 3.4} applies to the space $\mathcal D$, hence the two triangles
\[ \xymatrix@!0@C=60pt@R=40pt{
\FK_{234}(A) \ar[rr]^-{r_{234}^3i_3^{123}\delta_{123}^4f_4} && \FK_{1\setminus 4}(A) \ar[dl]|-\circ^(.4){(f^{34},f^1,f^{24})} \\
&  \FK_{34}(A)\oplus\FK_{1}(A)[1]\oplus\FK_{23}(A) \ar[ul]^(.6){i_{34}^{234}+\delta_1^{234}+i_{24}^{234}} & \\
\FK_{1\setminus 4}(A) \ar[rr]|-\circ^-{f^1\delta_1^{234}r_{234}^3i_3^{123}} && \FK_{123}(A) \ar[dl]^(.4){(r_{123}^{12},\delta_{123}^4,r_{123}^{13})} \\
& \FK_{12}(A)\oplus\FK_{4}(A)[1]\oplus\FK_{13}(A) \ar[ul]^(.6){f_{12}+f_4+f_{13}}
} \]
are exact.
Since $A$ is of real rank zero, the maps
\[ 
\FK_{123}^0(A)\xrightarrow{\delta_{123}^4}\FK_4^1(A),  \quad
\FK_{12}^0(A)\xrightarrow{\delta_{12}^{34}}\FK_{34}^1(A), 
\] \[
\FK_{13}^0(A)\xrightarrow{\delta_{13}^{24}}\FK_{24}^1(A), \quad
\FK_{1}^0(A)\xrightarrow{\delta_{1}^{234}}\FK_{234}^1(A)
\]
vanish by Proposition~4 of~\cite{linrordam}.
So for \csa s over $\mathcal D$ of real rank zero, the invariant $\FK'$ with the group $1\setminus 4_0$ and its related maps omitted, consists of the following groups and maps:
\[ {\scriptscriptstyle{ \xymatrix@!0@C=28pt@R=28pt{
& 24_1\ar[dr] & & 2_1\ar[dr] & & 12_1\ar[dr] & & 34_0\ar[dr] & & 3_0\ar[dr] & & 13_0\ar[dr] & \\
4_1\ar[ur]\ar[dr] & & 234_1\ar[ur]\ar[r]\ar[dr] & 1234_1\ar[r] & 123_1\ar[ur]\ar[r]\ar[dr] & 4_0\ar[r] & 1\setminus 4_1\ar[ur]\ar[r]\ar[dr] & 1_1\ar[r] & 234_0\ar[ur]\ar[r]\ar[dr] & 1234_0\ar[r] & 123_0\ar[ur]\ar[dr] && 1_0 \\
& 34_1\ar[ur] & & 3_1\ar[ur] & & 13_1\ar[ur] & & 24_0\ar[ur] & & 2_0\ar[ur] & & 12_0\ar[ur] & 
}}}
\]
The reduced filtered $\K$\nb-the\-ory $\FKgunnar$ consists of the sequences $3_1\to 4_0\to 34_0$, $2_1\to 4_0\to 24_0$, $1_1\to 234_0\to 1234_0$ together with the maps $34_0\to 234_0$ and $24_0\to 234_0$ and the group $4_1$.

We will now construct $\Phi=(\Phi^*_Y)_{Y\in\LC(\mathcal D)^*\cup\{1\setminus 4\}}$ from $\phi$.
Define $\Phi_{\si x}^1=\phi_{\gsi x}$, $\Phi_{\ob x}^0=\phi_{\gobd x}$, and $\Phi_{\osi x}^0=\phi_{\gosi x}$ for all $x\in\mathcal D$.
For $Y\in\{3,2,123,13,12,1\}$, the maps $\Phi_Y^0$ are constructed as the induced maps on cokernels, as in the proof of Theorem~\ref{thm:sixterm_to_takeshi}.

Since $\FK_1^1(A)$ is free and the sequence
\[ 0\longrightarrow \FK_4^0(A) \xrightarrow{f_4} \FK_{1\setminus 4}^1(A) \xrightarrow{f^1} \FK_1^1(A) \longrightarrow 0 \]
is exact, we can find a free subgroup $V_{1\setminus 4}$ of $\FK_{1\setminus 4}^1(A)$ for which $\im f_4\oplus V_{1\setminus 4}=\FK_{1\setminus 4}^1(A)$.
Consider the commuting diagram
\[ \xymatrix@C+30pt{
V_{1\setminus 4} \ar[r]^-{(f^{34},f^1,f^{24})} \ar@{..>}[d] & \FK_{34}^0(A)\oplus\FK_1^1(A)\oplus\FK_{24}^0(A) \ar[r]^-{i_{34}^{234}+\delta_1^{234}+i_{24}^{234}}\ar[d]^-{\Phi_{34}^0\oplus\Phi_1^1\oplus\Phi_{24}^0} & \FK_{234}^0(A)\ar[d]^-{\Phi_{234}^0} \\
\FK_{1\setminus 4}(B) \ar[r]^-{(f^{34},f^1,f^{24})} & \FK_{34}^0(B)\oplus\FK_1^1(B)\oplus\FK_{24}^0(B) \ar[r]^-{i_{34}^{234}+\delta_1^{234}+i_{24}^{234}} & \FK_{234}^0(B) .
} \]
Since the bottom row is exact and the top row is a complex, and due to freeness of $V_{1\setminus 4}$, we may choose a map $\psi\colon V_{1\setminus 4}\to\FK_{1\setminus 4}^1(B)$ that makes the left square of the diagram commute.
Define $\Phi_{1\setminus 4}^1$ on $\im f_4\oplus V_{1\setminus 4}$ as $\Phi_4^0 + \psi$.  By construction,
\[
\Phi_{1\setminus 4}^1 f_4 = f_4 \Phi_4^0, \quad
f^1 \Phi_{1\setminus 4}^1 = \Phi_1^1 f^1, \quad
f^{34} \Phi_{1\setminus 4}^1= \Phi_{34}^0 f^{34}, \quad
f^{24} \Phi_{1\setminus 4}^1 = \Phi_{24}^0 f^{24},
\]
and by the Five Lemma, the homomorphism $\Phi_{1\setminus 4}^1$ is an isomorphism if $\phi$ is an isomorphism.

Similarly, to construct $\Phi_{12}^1$, use exactness of the sequence
\[ 0\longrightarrow \FK_2^1(A) \xrightarrow{i_2^{12}} \FK_{12}(A) \xrightarrow{r_{12}^1} \FK_1^1(A) \xrightarrow{\delta_1^2} \FK_2^0(A) \]
and freeness of $\FK_1^1(A)$ to choose a free subgroup $V_{12}$ of $\FK_{12}^1(A)$ for which $\im i_2^{12}\oplus V_{12}=\FK_{12}^1(A)$.
Consider the commuting diagram
\[ \xymatrix{
V_{12} \ar@{..>}[d] \ar[r]^-{f_{12}} & \FK_{1\setminus 4}^1(A) \ar[d]^-{\Phi_{1\setminus 4}^1}\ar[r]^-{f^{24}} & \FK_{24}^0(A) \ar[d]^-{\Phi_{24}^0} \\
\FK_{12}^1(B) \ar[r]^-{f_{12}} & \FK_{1\setminus 4}^1(B) \ar[r]^-{f^{24}} & \FK_{24}^0(B) .
} \]
Using exactness of the bottom row and that the top row is a complex,
the map $\Phi_{12}^1$ can be constructed so that
\[
\Phi_{12}^1 i_2^{12} = i_2^{12} \Phi_2^1, \quad
f_{12} \Phi_{12}^1 = \Phi_{1\setminus 4}^1 f_{12} .
\]
Again due to the Five Lemma, $\Phi_{12}^1$ is an isomorphism if $\phi$ is.
The maps $\Phi_{13}^{1}$, $\Phi_{123}^{1}$, $\Phi_{1234}^{1}$, $\Phi_{234}^{1}$, $\Phi_{34}^{1}$, and $\Phi_{24}^{1}$ are constructed similarly and in the specified order.

Finally, the group $\FK_{1\setminus 4}^0(A)$ is naturally isomorphic to
\begin{align*}
&\coker\bigl(\FK_{123}^0(A)\xrightarrow{(r_{123}^{12},\delta_{123}^4,r_{123}^{13})}\FK_{12}^0(A)\oplus\FK_4^1(A)\oplus\FK_{13}^1(A)\bigr) \\ &= \FK_4^1(A)\oplus\coker\bigl(\FK_{123}^0(A)\xrightarrow{(r_{123}^{12},r_{123}^{13})}\FK_{12}^0(A)\oplus\FK_{13}^1(A)\bigr)
\end{align*}
whose second summand, due to real rank zero, is naturally isomorphic to $\FK_1^0(A)$
Therefore, by defining $\Phi_{1\setminus 4}^0$ as the map induced by $\Phi_4^1\oplus\Phi_1^0$, $\Phi$ becomes a $\NT'$-morphism.
\end{proof}

\begin{corollary}
Let $A$ and $B$ be \csa s over $\mathcal D$.  Assume that $A$ and $B$ have real rank zero, that $\K_1\bigl(A(x)\bigr)$ and $\K_1\bigl(B(x)\bigr)$ are free abelian groups for all $x\in\{1,2,3\}$, and that $A$ and $B$ are in the bootstrap class of Meyer--Nest.
Then any isomorphism $\FKgunnar(A)\to\FKgunnar(B)$ lifts to a $\KK(\mathcal D)$-equivalence.
\end{corollary}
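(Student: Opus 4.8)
The plan is simply to chain the two results that have already been established for the space~$\mathcal D$: Proposition~\ref{prop:diamond}, which upgrades an isomorphism on reduced filtered $\K$\nb-the\-ory to one on the refined invariant $\FK'$, and the refined universal coefficient theorem~\cite{bentmann}*{Theorem~6.2.14}, which turns an isomorphism on $\FK'$ into a $\KK(\mathcal D)$-equivalence. The genuine work therefore sits entirely inside Proposition~\ref{prop:diamond}; the corollary itself is a formal consequence.

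First I would check that the hypotheses of Proposition~\ref{prop:diamond} hold: $A$ and $B$ are \csa s over~$\mathcal D$ of real rank zero, and $\K_1\bigl(A(x)\bigr)$, $\K_1\bigl(B(x)\bigr)$ are free abelian for all $x\in\{1,2,3\}$ --- these points being exactly the non-open points of~$\mathcal D$, since $1$ is its unique closed point and $4$ its unique open point. Hence, given an isomorphism $\phi\colon\FKgunnar(A)\to\FKgunnar(B)$, Proposition~\ref{prop:diamond} yields a homomorphism $\Phi\colon\FK'(A)\to\FK'(B)$ restricting to $\phi$, and, $\phi$ being an isomorphism, $\Phi$ is an isomorphism by construction.

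Next I would invoke~\cite{bentmann}*{Theorem~6.2.14}. The hypothesis needed there is that $A$ and $B$ belong to the bootstrap class of Meyer--Nest, which is assumed; consequently the isomorphism $\Phi$ on refined filtered $\K$\nb-the\-ory lifts to a $\KK(\mathcal D)$-equivalence $A\to B$, as required. (The pure-infiniteness and Kirchberg hypotheses that accompany~\cite{bentmann}*{Theorem~6.2.14} elsewhere in this paper are needed only for the further step, via~\cite{kirchberg}, from a $\KK(\mathcal D)$-equivalence to a $\mathcal D$-equivariant \Star{}iso\-mor\-phism, and are irrelevant to the present statement.)

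If there is any obstacle at all, it is purely bookkeeping: making sure that the freeness condition on $\{1,2,3\}$ is precisely the one Proposition~\ref{prop:diamond} demands and that the bootstrap-class assumption is the one under which the refined UCT applies. Both checks are immediate, and no new construction beyond the one performed in the proof of Proposition~\ref{prop:diamond} is required.
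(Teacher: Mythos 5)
Your proposal is correct and is exactly the paper's argument: the paper proves this corollary by the one-line combination of Proposition~\ref{prop:diamond} with~\cite{bentmann}*{Theorem 6.2.14}. Your hypothesis-checking (freeness on $\{1,2,3\}$, bootstrap class for the refined UCT) is accurate and just makes explicit what the paper leaves implicit.
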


\begin{proof}
Combine Proposition~\ref{prop:diamond} with~\cite{bentmann}*{Theorem 6.2.14}.
\end{proof}

\begin{corollary}
Let $A$ and $B$ be stable Kirchberg $\mathcal D$-algebras of real rank zero, assume that $\K_1\bigl(A(x)\bigr)$ and $\K_1\bigl(B(x)\bigr)$ are free abelian groups for all $x\in\{1,2,3\}$, and assume that $A(x)$ and $B(x)$ are in the bootstrap class for all $x\in\mathcal D$.
Then any isomorphism $\FKgunnar(A)\to\FKgunnar(B)$ lifts to a $\mathcal D$-equivariant \Star{}isomorphism $A\to B$.
\end{corollary}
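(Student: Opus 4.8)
The plan is to obtain the \Star{}isomorphism by concatenating the previous corollary with Kirchberg's classification theorem, as recalled in the historical account. Concretely, a given isomorphism $\phi\colon\FKgunnar(A)\to\FKgunnar(B)$ will first be lifted to a $\KK(\mathcal D)$-equivalence via the previous corollary, and this $\KK(\mathcal D)$-equivalence will then be lifted further to a $\mathcal D$-equivariant \Star{}isomorphism via \cite{kirchberg}; a check that the composite construction still induces $\phi$ completes the argument.

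In more detail, I would first verify that the hypotheses of the previous corollary hold. A stable Kirchberg $\mathcal D$-algebra is in particular a tight, $\mathcal O_\infty$-absorbing, nuclear, separable \csa{} over~$\mathcal D$, and here $A$ and $B$ are moreover assumed to have real rank zero with $\K_1\bigl(A(x)\bigr)$ and $\K_1\bigl(B(x)\bigr)$ free for $x\in\{1,2,3\}$. Since the simple subquotients of a tight $\mathcal D$-algebra are exactly the subquotients $A(\{x\})=A(x)$, $x\in\mathcal D$, the assumption that $A(x)$ and $B(x)$ lie in the bootstrap class for every $x\in\mathcal D$ amounts to saying that all simple subquotients of $A$ and $B$ lie in the bootstrap class, equivalently that $A$ and $B$ belong to the Meyer--Nest bootstrap class over~$\mathcal D$ (see~\cite{MN:Bootstrap}). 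The previous corollary therefore applies and produces a $\KK(\mathcal D)$-equivalence $\kappa\in\KK_0(\mathcal D;A,B)$ with $\FKgunnar(\kappa)=\phi$; this makes sense because $\FKgunnar$ is a functor on $\kk(\mathcal D)$, whose morphism sets are Kasparov groups. Applying Kirchberg's theorem (\cite{kirchberg}) to $A$, $B$ and $\kappa$, I obtain a $\mathcal D$-equivariant \Star{}isomorphism $\alpha\colon A\to B$ whose class in $\KK_0(\mathcal D;A,B)$ equals~$\kappa$. Since $\FKgunnar$ factors through $\kk(\mathcal D)$, we get $\FKgunnar(\alpha)=\FKgunnar(\kappa)=\phi$, so $\alpha$ lifts the original isomorphism, as required.

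The two steps are individually short, so the only delicate point is bookkeeping: one should make sure that the two formulations of the bootstrap-class condition (via simple subquotients here, via membership in the bootstrap class over~$\mathcal D$ in the previous corollary) really coincide, and that the word ``lift'' is used consistently --- that is, that the $\KK(\mathcal D)$-equivalence provided by the previous corollary genuinely induces the given $\phi$, and that Kirchberg's \Star{}isomorphism can be chosen carrying the prescribed $\KK(\mathcal D)$-class, so that the composite argument really lifts $\phi$ rather than merely exhibiting an abstract isomorphism $A\cong B$.
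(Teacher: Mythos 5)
Your proof is correct and follows essentially the same route as the paper: the paper combines Proposition~\ref{prop:diamond} directly with \cite{bentmann}*{Theorem 6.2.15}, which is precisely the packaging of the $\KK(\mathcal D)$-equivalence lifting (Theorem 6.2.14, i.e.\ the previous corollary) with Kirchberg's theorem that you spell out in two steps. Your side remarks --- that the fiberwise bootstrap condition is equivalent to membership in the Meyer--Nest bootstrap class for nuclear algebras, and that the composite genuinely induces the given $\phi$ --- are exactly the right bookkeeping and are unproblematic.
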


\begin{proof}
Combine Proposition~\ref{prop:diamond} with~\cite{bentmann}*{Theorem 6.2.15}.
\end{proof}

\section{Unital filtered \texorpdfstring{$\K$}{K}-theory} \label{sec:unital}

In~\cite{rr}*{2.1}, Gunnar Restorff and Efren Ruiz showed that if a functor~$F$ (that factors through the functor~$\K_0$) strongly classifies a certain type of class of \csa s up to stable isomorphism, then the functor $A\mapsto\bigl(F(A),[1_A]\in\K_0(A)\bigr)$ classifies unital, properly infinite \csa s in the class up to isomorphism. A version with slightly generalized assumptions of this so-called \emph{meta-theorem} may be found in~\cite{err_meta} as Theorem~3.3. With these generalized assumptions, the theorem applies to filtered $\K$\nb-the\-ory $\FK$ over accordion spaces~$X$ with respect to Kirchberg $X$\nb-al\-ge\-bras with simple subquotients in the bootstrap class.

Let $X$ be an arbitrary finite $T_0$-space. For $x,x'\in X$, we let $\inf(x,x')$ denote the set $\{ y\in X \mid y\to x, y\to x'\}$.

\begin{definition}
The category $\Mod(\Catsixterm)^\pt$ of \emph{pointed $\Catsixterm$-modules} is defined to have objects $(M,m)$ where $M$ is a $\Catsixterm$-module and $m\in M(X,0)$, and morphisms $\phi\colon(M,m)\to(N,n)$ that are $\Catsixterm$-morphisms with $\phi(m)=n$.

The category $\Mod(\Cattakeshi)^\pt$ of \emph{pointed $\Cattakeshi$-modules} is defined similarly with objects $(M,m)$ where $M$ is a $\Cattakeshi$-module and 
\[
m\in\coker\left(\bigoplus_{x,x'\in X,\: y\in\inf(x,x')}M(\gosi y)\xrightarrow{\begin{pmatrix} i_{\gosi y}^{\gosi x} & -i_{\gosi y}^{\gosi x'} \end{pmatrix}} \bigoplus_{x\in X} M(\gosi x)\right),
\]
 and a morphism $\phi\colon(M,m)\to(N,n)$ is a $\Cattakeshi$-morphism whose induced map on the cokernels sends $m$ to $n$.
 
Similarly, the categories $\Mod(\Catgunnartakeshi)^\pt$  and $\Mod(\Catgunnar)^\pt$ of \emph{pointed $\Catgunnartakeshi$-modules} respectively \emph{pointed $\Catgunnar$-modules} are defined.
\end{definition}

\begin{definition}
A pointed $\Catsixterm$-module $(M,m)$ is called \emph{exact} if $M$ is an exact $\Catsixterm$-module, and \emph{\rrzero{}} if $M$ is \rrzero .
Similary, a pointed $\Cattakeshi$-module, $\Catgunnartakeshi$-module, or $\Catgunnar$-module $(M,m)$ is called \emph{exact} if $M$ is exact.
\end{definition}

\begin{lemma} \label{lem:seq_unit}
Let $M$ be an exact \rrzero{} $\Catsixterm$-module.  Then the sequence
\[
\bigoplus_{x,x'\in X,\: y\in\inf(x,x')} M(\osi y,0) \xrightarrow{\begin{pmatrix} i_{\osi y}^{\osi x} & -i_{\osi y}^{\osi x'} \end{pmatrix}}
\bigoplus_{x\in X} M(\osi x,0) \xrightarrow{(i_{\osi x}^X)} M(X,0) \to 0
\]
is exact.
\end{lemma}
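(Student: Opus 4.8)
The plan is to obtain this from Lemma~\ref{lem:exactseq}, applied first to the covering of the open set $X$ by the basic opens $\osi x$, $x\in X$, and then to the double intersections $\osi x\cap\osi{x'}$ (which are again open).

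The surjectivity of $(i_{\osi x}^X)$ and the fact that the two maps form a complex are immediate. The former is precisely the surjectivity asserted by Lemma~\ref{lem:exactseq} (for our exact \rrzero{} module $M$) applied to the covering $\{\osi x\}_{x\in X}$ of $Y=X$. For the latter, whenever $y\in\inf(x,x')$ one has $\osi y\subseteq\osi x$ and $\osi y\subseteq\osi{x'}$, so Proposition~\ref{pro:relations}\ref{it:ii} gives $i_{\osi y}^{\osi x}i_{\osi x}^X=i_{\osi y}^X=i_{\osi y}^{\osi{x'}}i_{\osi{x'}}^X$, whence the image of each generator of the first map is killed by the second.

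The real content is exactness in the middle. By Lemma~\ref{lem:exactseq} for the covering $\{\osi x\}_{x\in X}$ of $X$, the kernel of $(i_{\osi x}^X)$ equals the image of the map $\bigoplus_{x,x'}M(\osi x\cap\osi{x'},0)\to\bigoplus_x M(\osi x,0)$ with components $i_{\osi x\cap\osi{x'}}^{\osi x}-i_{\osi x\cap\osi{x'}}^{\osi{x'}}$. Thus it suffices to identify, for each pair $(x,x')$, the image of the $(x,x')$-component with a sum of images of summands of the $\inf$-indexed map. Here I would apply Lemma~\ref{lem:exactseq} a second time, to the open set $\osi x\cap\osi{x'}$ and its covering by the basic opens $\osi y$ it contains (equivalently, by Lemma~\ref{lem:presheaf_on_canonical_base_is_sheaf} and Lemma~\ref{lem:cosheaf_equivalence} this is the cosheaf-on-a-basis computation of $M(\osi x\cap\osi{x'},0)$): this shows that the maps $i_{\osi y}^{\osi x\cap\osi{x'}}$, as $y$ runs over the relevant points above $x$ and $x'$, are jointly surjective onto $M(\osi x\cap\osi{x'},0)$. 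Composing with Proposition~\ref{pro:relations}\ref{it:ii} once more, every element of the image of the $(x,x')$-component becomes a sum of elements $\bigl(i_{\osi y}^{\osi x}(m)\bigr)-\bigl(i_{\osi y}^{\osi{x'}}(m)\bigr)$ with $m\in M(\osi y,0)$ and $\osi y\subseteq\osi x\cap\osi{x'}$ — i.e.\ images of generators of the $\inf$-indexed map. Conversely each such generator lies in the double-intersection image via the inclusion $\osi y\subseteq\osi x\cap\osi{x'}$, so the two images agree and the sequence is exact.

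The step I expect to require the most care is this second application of Lemma~\ref{lem:exactseq} together with the accompanying bookkeeping: one must keep track of the three families of restriction maps $i_{\osi y}^{\osi x\cap\osi{x'}}$, $i_{\osi x\cap\osi{x'}}^{\osi x}$ and $i_{\osi y}^{\osi x}$, invoke Proposition~\ref{pro:relations}\ref{it:ii} in the correct order, and — in the general case — check that only the points $y$ indexing $\inf(x,x')$ are ever needed, so that one genuinely lands in the image of the map as written rather than in some larger subgroup. For accordion or EBP spaces the intersections $\osi x\cap\osi{x'}$ are themselves of the form $\osi y$ (or empty, or comparable to one of $\osi x$, $\osi{x'}$), and this bookkeeping becomes essentially automatic.
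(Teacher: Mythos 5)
Your argument is the paper's argument: both proofs consist of two applications of Lemma~\ref{lem:exactseq}, first to the covering of $X$ by the basic open sets $\osi{x}$ (giving surjectivity and identifying the kernel of $(i_{\osi{x}}^X)$ with the image of the double-intersection map), and then to the open sets $\osi{x}\cap\osi{x'}$ in order to replace the double-intersection terms by the $\inf$-indexed terms. The step you defer to the end --- ``check that only the points $y$ indexing $\inf(x,x')$ are ever needed'' --- is not peripheral bookkeeping; it is the entire remaining content, and the paper closes it with a single observation: for every pair $x,x'$ the collection $\bigl(\osi{y}\bigr)_{y\in\inf(x,x')}$ covers $\osi{x}\cap\osi{x'}$, so Lemma~\ref{lem:exactseq} makes the map $\bigoplus_{y\in\inf(x,x')}M(\osi{y},0)\to M(\osi{x}\cap\osi{x'},0)$ surjective, and composing with $i_{\osi{x}\cap\osi{x'}}^{\osi{x}}-i_{\osi{x}\cap\osi{x'}}^{\osi{x'}}$ via Proposition~\ref{pro:relations}\ref{it:ii} shows the two images coincide, exactly as you outline. (The converse containment you verify at the end is not needed, since the complex property already gives it.)

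Where your write-up goes astray is in the diagnosis of that deferred step. The difficulty has nothing to do with accordion versus EBP versus general $X$; it is solely the question of whether $\inf(x,x')$ indexes a covering of $\osi{x}\cap\osi{x'}$, and the case that actually requires attention is the one your closing sentence dismisses, namely comparable points. If $x<x'$ then $\osi{x}\cap\osi{x'}=\osi{x'}$, whose unique minimal point is $x'$ itself, and $x'$ does not satisfy $x'\to x'$; so with the literal reading of $\inf(x,x')=\{y\mid y\to x,\ y\to x'\}$ the covering claim fails already for the two-point space $2\to 1$ (there $\inf(1,2)=\emptyset$ while $\osi{1}\cap\osi{2}=\osi{2}\neq\emptyset$, and the generator $\bigl(i_{\osi{2}}^{\osi{1}},-\mathrm{id}\bigr)$ needed to span the kernel is missing). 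The lemma, and both proofs, therefore require $\inf(x,x')$ to be read so that it contains at least the minimal points of $\osi{x}\cap\osi{x'}$ --- in particular $\max(x,x')$ when $x$ and $x'$ are comparable. Once that covering claim is granted, your argument is complete and identical to the paper's; without pinning it down, the proof has a genuine hole precisely at the spot you flagged, and the retreat to accordion or EBP spaces neither repairs it nor is permitted by the statement, which is made for an arbitrary finite $T_0$-space.
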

\begin{proof}
By Lemma~\ref{lem:exactseq} the horizontal row of the following commuting diagram is exact:
\[
\xymatrix@C-1pt@R+14pt{
\displaystyle \bigoplus_{x,x'\in X} M(\osi x\cap \osi{x'},0) \ar[rrr]^-{\scriptscriptstyle \begin{pmatrix} i_{\osi x\cap\osi{x'}}^{\osi x} & -i_{\osi x\cap\osi{x'}}^{\osi x'} \end{pmatrix}} &&& \displaystyle\bigoplus_{x\in X} M(\osi x,0) \ar[r]^-{} & M(X,0) \ar[r] & 0 \\
\displaystyle\bigoplus_{\stackrel{x,x'\in X}{y\in\inf(x,x')}}M(\osi y,0) \ar[u]^-{(i_{\osi y}^{\osi x\cap\osi{x'}})}  \ar[urrr]_(.5){\qquad \scriptscriptstyle \begin{pmatrix} i_{\osi y}^{\osi x} & -i_{\osi y}^{\osi x'} \end{pmatrix}}  &&&&&
}
\]
Furthermore, since for any pair $x,x'\in X$ the collection $(\osi y)_{y\in\inf(x,x')}$ covers $\osi x \cap \osi{x'}$, we see by Lemma~\ref{lem:exactseq} that the vertical map in the diagram is surjective.  This establishes the desired result.
\end{proof}

\begin{definition}
Let $A$ be a unital \csa{} over a finite $T_0$-space $X$.
Its \emph{unital concrete filtered $\K$\nb-the\-ory} $\FKsixterm^\unit(A)$ is defined as the pointed $\Catsixterm$-module $(\FKsixterm(A),[1_A])$.

If $A$ has real rank zero, then its \emph{unital reduced filtered $\K$\nb-the\-ory} $\FKgunnar^\unit(A)$ is defined as the pointed $\Catgunnar$-module $(\FKgunnar(A),u(A))$ where $u(A)$ is the unique element in
\[
\coker\left( \bigoplus_{x,x'\in X,\: y\in\inf(x,x')} \FK_{\osi y}^0(A)\xrightarrow{\begin{pmatrix} i_{\gosi y}^{\gosi x} & -i_{\gosi y}^{\gosi x'} \end{pmatrix}}\bigoplus_{x\in X} \FK_{\osi x}^0(A)\right)
\]
that is mapped to $[1_A]$ in $\K_0(A)$ by the map induced by the family $\bigl(\FK_{\osi x}^0(A)\xrightarrow{i_{\osi x}^X}\FK_X^0(A)\bigr)_{x\in X}$, see~Lemma~\ref{lem:seq_unit}.

If $A$ has real rank zero and $X$ is a unique path space, then its \emph{unital filtered $\K$\nb-the\-ory restricted to the canonical base} $\FKtakeshi^\unit(A)$ is defined similarly.
\end{definition}

By Lemma~\ref{lem:seq_unit}, we may view the forgetful functor $\Ftakeshi\colon\Mod(\Catsixterm)\to\Mod(\Cattakeshi)$ as a functor from \emph{pointed} exact \rrzero{} $\Catsixterm$-modules to \emph{pointed} exact $\Cattakeshi$-modules and immediately obtain the following pointed version of Theorem~\ref{thm:sixterm_to_takeshi}:

\begin{proposition}
  \label{pro:pointed_sixterm_to_takeshi}
For every EBP space $X$, the forgetful functor from exact pointed \rrzero{} $\Catsixterm$-modules to exact pointed $\Cattakeshi$-modules is an equivalence of categories.
\end{proposition}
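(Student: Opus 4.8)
The plan is to deduce this from Theorem~\ref{thm:sixterm_to_takeshi} by simply carrying along the pointing data. The key observation is Lemma~\ref{lem:seq_unit}: for every exact \rrzero{} $\Catsixterm$-module $M$ the group $M(X,0)$ is naturally identified with
\[
\coker\Bigl(\bigoplus_{x,x'\in X,\: y\in\inf(x,x')} M(\osi y,0) \xrightarrow{\begin{pmatrix} i_{\osi y}^{\osi x} & -i_{\osi y}^{\osi x'} \end{pmatrix}} \bigoplus_{x\in X} M(\osi x,0)\Bigr),
\]
and this cokernel depends only on the restriction $\Ftakeshi(M)$, because the groups $M(\osi y,0)=\Ftakeshi(M)(\gosi y)$ and the maps $i_{\osi y}^{\osi x}$ are part of the $\Cattakeshi$-module structure. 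In other words, under this identification a point $m\in M(X,0)$ is exactly the same datum as a pointing of the $\Cattakeshi$-module $\Ftakeshi(M)$ in the sense of the definition of $\Mod(\Cattakeshi)^\pt$.

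First I would use this to check fully faithfulness. A morphism of pointed \rrzero{} $\Catsixterm$-modules $(M,m)\to(M',m')$ is a $\Catsixterm$-morphism $\phi$ with $\phi(m)=m'$; by naturality of the family $\bigl(i_{\osi x}^X\bigr)_{x\in X}$, such a $\phi$ induces a map between the above cokernels, and the condition $\phi(m)=m'$ is equivalent to that induced map sending the $\Cattakeshi$-pointing of $(M,m)$ to that of $(M',m')$ -- i.e.\ to $\Ftakeshi(\phi)$ being a morphism of pointed $\Cattakeshi$-modules. Since the unpointed forgetful functor is an equivalence by Theorem~\ref{thm:sixterm_to_takeshi}, it is a bijection on the relevant morphism sets, and the pointing conditions match up on both sides; hence the pointed forgetful functor is fully faithful.

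For essential surjectivity, let $(N,n)$ be an exact pointed $\Cattakeshi$-module. By the construction $G$ in the proof of Theorem~\ref{thm:sixterm_to_takeshi} there is an exact \rrzero{} $\Catsixterm$-module $G(N)$ with $\Ftakeshi(G(N))=N$ on the nose (as $GF$ is the identity functor). Applying Lemma~\ref{lem:seq_unit} to $M=G(N)$, the group $G(N)(X,0)$ is precisely the cokernel displayed above, now computed from the data of $N$, which is exactly the group in which the pointing $n$ lives; thus $n$ determines a unique $m\in G(N)(X,0)$, and $(G(N),m)$ maps to $(N,n)$. I expect the only point requiring (minor) care to be the verification that the identification $M(X,0)\cong\coker(\cdots)$ of Lemma~\ref{lem:seq_unit} is natural in $M$ and genuinely translates the pointing conditions on morphisms in the two categories into one another; but this is immediate from the naturality of the maps $i_{\osi x}^X$ and requires no idea beyond Lemma~\ref{lem:seq_unit} and Theorem~\ref{thm:sixterm_to_takeshi}.
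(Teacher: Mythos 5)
Your proposal is correct and is exactly the argument the paper has in mind: the paper derives the proposition by observing, via Lemma~\ref{lem:seq_unit}, that for an exact \rrzero{} $\Catsixterm$-module $M$ the group $M(X,0)$ is naturally the cokernel in which the $\Cattakeshi$-pointing lives, so the pointing data transfer along the equivalence of Theorem~\ref{thm:sixterm_to_takeshi}. You have merely spelled out the full-faithfulness and essential-surjectivity checks that the paper leaves implicit.
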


\begin{proposition}
Assume that $X$ is a unique path space.  Let $(M,m)$ and $(N,n)$ be exact pointed $\Catgunnartakeshi$-modules with $M(\gsi x)$ and $N(\gsi x)$ free for all non-open points $x\in X$, and let $\phi\colon\FGTgunnar(M)\to\FGTgunnar(N)$ be a pointed $\Catgunnar$-module homomorphism. Then there exists a \textup{(}not necessarily unique\textup{)} pointed $\Catgunnartakeshi$-module homomorphism $\Phi\colon M\to N$ satisfying $\FGTgunnar(\Phi)=\phi$, and if $\phi$ is an isomorphism, then $\Phi$ is by construction an isomorphism.
\end{proposition}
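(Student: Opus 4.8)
The plan is to reduce the statement to Theorem~\ref{thm:gunnartotakeshi} and then check that the homomorphism produced there is automatically compatible with the chosen base points, so that essentially no new work is required. First I would discard the base points and apply Theorem~\ref{thm:gunnartotakeshi} to the exact $\Catgunnartakeshi$-modules $M$ and $N$ (whose groups $M(\gsi x)$, $N(\gsi x)$ are free for all non-open $x\in X$) and to the underlying $\Catgunnar$-module homomorphism of the given pointed homomorphism $\phi$. This produces a $\Catgunnartakeshi$-module homomorphism $\Phi\colon M\to N$ with $\FGTgunnar(\Phi)=\phi$, which is an isomorphism whenever $\phi$ is. A look at the construction in the proof of Theorem~\ref{thm:gunnartotakeshi} shows that $\Phi_{\gosi x}=\phi_{\gosi x}$ and $\Phi_{\gsi x}=\phi_{\gsi x}$ for every $x\in X$; only the components $\Phi_{\tcsi x}$ were genuinely defined by the inductive argument.

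The key observation is then that the base point of a pointed $\Catgunnartakeshi$-module lives in the cokernel $\coker\bigl(\bigoplus_{x,x'\in X,\,y\in\inf(x,x')} M(\gosi y)\to\bigoplus_{x\in X} M(\gosi x)\bigr)$, formed using only the groups $M(\gosi x)$ and the morphisms $i_{\gosi y}^{\gosi x}$ with $y\to x$ — precisely the data that $\FGTgunnar$ leaves untouched and on which $\Phi$ literally agrees with $\phi$. Consequently $\Phi$ and $\phi$ induce the very same homomorphism between the corresponding cokernels for $M$ and $N$. Since $\phi$ is a \emph{pointed} $\Catgunnar$-homomorphism, this induced map carries $m$ to $n$; hence $\Phi$ is a pointed $\Catgunnartakeshi$-homomorphism, and $\FGTgunnar(\Phi)=\phi$ also holds in the pointed category. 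The isomorphism assertion is inherited verbatim from Theorem~\ref{thm:gunnartotakeshi}.

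I do not expect any genuine obstacle: all the substance already sits in Theorem~\ref{thm:gunnartotakeshi}, and the compatibility with base points is a formality once one records that the cokernel appearing in the definition of pointed $\Catgunnartakeshi$-modules only involves the $\gosi{}$-objects and the $i_{\gosi y}^{\gosi x}$-morphisms. The only point requiring a little care is to state this matching of data cleanly, so that the reader sees that $\Phi$ and $\phi$ induce identical maps on the relevant cokernels.
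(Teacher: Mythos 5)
Your proposal is correct and follows essentially the same route as the paper, which likewise deduces the pointed statement directly from Theorem~\ref{thm:gunnartotakeshi} by noting that the groups $M(\gosi x)$ (and the maps $i_{\gosi y}^{\gosi x}$ defining the cokernel in which the base point lives) are not forgotten by $\FGTgunnar$, so that $\Phi$ and $\phi$ induce the same map on those cokernels. Your write-up merely makes explicit what the paper states in one line.
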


\begin{proof}
This follows from Theorem~\ref{thm:gunnartotakeshi} since the groups $M(\gosi x)$ are not forgotten by $\FGTgunnar$.
\end{proof}

\begin{corollary} \label{cor:unit_takeshi}
Let $X$ be an accordion space, and let $A$ and $B$ be unital Kirchberg $X$\nb-al\-ge\-bras of real rank zero with all simple subquotients in the bootstrap class.
Then any isomorphism $\FKtakeshi^\unit(A)\to\FKtakeshi^\unit(B)$ lifts to an $X$\nb-equi\-vari\-ant \Star{}isomorphism $A\to B$.
\end{corollary}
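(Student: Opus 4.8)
The plan is to run the given isomorphism through the chain of invariants $\FKtakeshi^\unit\leadsto\FKsixterm^\unit\leadsto\FK^\unit$ and then to invoke the unital version of the Meyer--Nest/Kirchberg classification provided by the meta-theorem. First I would note that an accordion space $X$ is an EBP space (the lemma preceding Remark~\ref{rem:Q} applies to it, as observed there), and in particular a unique path space, so that $\FKtakeshi^\unit$ is defined. Since $A$ and $B$ are real-rank-zero Kirchberg $X$\nb-al\-ge\-bras, they are tight and purely infinite, so by Remark~\ref{rem:K0lift} the $\Catsixterm$-modules $\FKsixterm(A)$ and $\FKsixterm(B)$ are exact and \rrzero{}; hence $\FKsixterm^\unit(A)$ and $\FKsixterm^\unit(B)$ are exact pointed \rrzero{} $\Catsixterm$-modules. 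Viewing $\Ftakeshi$, via Lemma~\ref{lem:seq_unit}, as a functor into pointed exact $\Cattakeshi$-modules, it carries these to $\FKtakeshi^\unit(A)$ and $\FKtakeshi^\unit(B)$. By Proposition~\ref{pro:pointed_sixterm_to_takeshi} this functor is an equivalence of categories; being in particular fully faithful and isomorphism-reflecting, it lifts the given isomorphism $\FKtakeshi^\unit(A)\to\FKtakeshi^\unit(B)$ to an isomorphism of pointed $\Catsixterm$-modules $\FKsixterm^\unit(A)\to\FKsixterm^\unit(B)$. (Note that, in contrast with the route through $\FKgunnar$, this step needs no freeness hypothesis.)

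Since $X$ is an accordion space, Theorem~\ref{bentmann} shows that the canonical functor $\Catsixterm\to\NT$ is an isomorphism, so $\FKsixterm$ agrees with $\FK$ and we obtain an isomorphism $\FK(A)\to\FK(B)$ taking $[1_A]$ to $[1_B]$. By Theorem~\ref{bentmann}, $\FK$ strongly classifies stable Kirchberg $X$\nb-al\-ge\-bras with simple subquotients in the bootstrap class up to $X$\nb-equi\-vari\-ant \Star{}isomorphism. As explained at the beginning of Section~\ref{sec:unital}, the meta-theorem~\cite{err_meta}*{Theorem~3.3} therefore applies to $\FK$ over the accordion space~$X$, and shows that $A\mapsto\bigl(\FK(A),[1_A]\bigr)$ classifies \emph{unital, properly infinite} Kirchberg $X$\nb-al\-ge\-bras with simple subquotients in the bootstrap class up to $X$\nb-equi\-vari\-ant \Star{}isomorphism. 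Since unital Kirchberg $X$\nb-al\-ge\-bras are properly infinite (being $\mathcal O_\infty$-absorbing, they contain a unital copy of $\mathcal O_\infty$), this applies to $A$ and $B$, and the pointed isomorphism $\FK^\unit(A)\to\FK^\unit(B)$ produced above lifts to an $X$\nb-equi\-vari\-ant \Star{}isomorphism $A\to B$.

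The step requiring the most care is the identification underlying the first link of the chain: one must verify that, under the isomorphism of Lemma~\ref{lem:seq_unit}, the distinguished cokernel element used to define $\FKtakeshi^\unit(A)$ is exactly the image of the class $[1_A]\in\FK_X^0(A)=\K_0(A)$, so that $\Ftakeshi$ genuinely sends $\FKsixterm^\unit(A)$ to $\FKtakeshi^\unit(A)$ as \emph{pointed} modules and likewise for $B$. This compatibility of the three pointings is built into the definitions, but it is the place where all the base points have to be matched up; once it is in place, the remaining steps are formal consequences of the cited results.
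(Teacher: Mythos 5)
Your proof is correct and follows essentially the same route as the paper: lift the isomorphism from $\FKtakeshi^\unit$ to $\FKsixterm^\unit=\FK^\unit$ using the EBP equivalence, then apply Theorem~\ref{bentmann} and the meta-theorem of~\cite{err_meta}. The only (harmless) difference is that you invoke the pointed equivalence of Proposition~\ref{pro:pointed_sixterm_to_takeshi} where the paper cites the unpointed Corollary~\ref{cor:sixterm_to_takeshi}; your version makes the bookkeeping of the unit classes, which you rightly flag as the delicate point, slightly more explicit.
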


\begin{proof}
This follows from Theorem~3.3 in~\cite{err_meta} together with Theorem~\ref{bentmann} and Corollary~\ref{cor:sixterm_to_takeshi}.
\end{proof}

\begin{corollary} \label{cor:unit_gunnar}
Let $X$ be an accordion space, and let $A$ and $B$ be unital Kirchberg $X$\nb-al\-ge\-bras of real rank zero with all simple subquotients in the bootstrap class.
Assume that $\K_1\bigl(A(x)\bigr)$ and $\K_1\bigl(B(x)\bigr)$ are free abelian groups for all $x\in X$.
Then any isomorphism $\FKgunnar^\unit(A)\to\FKgunnar^\unit(B)$ lifts to an $X$\nb-equi\-vari\-ant \Star{}isomorphism $A\to B$.
\end{corollary}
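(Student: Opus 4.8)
The plan is to chain the pointed lifting results of the previous sections, exactly as in the proof of Corollary~\ref{cor:unit_takeshi} but with reduced filtered $\K$\nb-the\-ory in place of $\FKtakeshi$.

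First I would record the topological and $\K$\nb-the\-o\-retic hypotheses in the form needed downstream. An accordion space is a forest, hence an EBP space and in particular a unique path space, so every result of Sections~\ref{sec:takeshi_invariant} and~\ref{sec:intermediate} applies to~$X$. Since $A$ and~$B$ are Kirchberg $X$\nb-al\-ge\-bras of real rank zero, Remark~\ref{rem:K0lift} shows that $\FKsixterm(A)$ and $\FKsixterm(B)$ are exact \rrzero{} $\Catsixterm$-modules; and the assumption that $\K_1\bigl(A(x)\bigr)=\FK_{\si x}^1(A)$ and $\K_1\bigl(B(x)\bigr)$ are free for \emph{all} $x\in X$ covers, in particular, the non-open points. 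We are therefore in the situation to which Corollary~\ref{cor:gunnartosixterm_cstar} applies.

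Second, I would upgrade Corollary~\ref{cor:gunnartosixterm_cstar} to a pointed statement. Given a pointed isomorphism $\phi\colon\FKgunnar^\unit(A)\to\FKgunnar^\unit(B)$, I would first apply the pointed analogue of Theorem~\ref{thm:gunnartotakeshi} (stated immediately after Proposition~\ref{pro:pointed_sixterm_to_takeshi}) to lift $\phi$ to a pointed isomorphism of $\Catgunnartakeshi$-modules, and then apply the pointed equivalence of Proposition~\ref{pro:pointed_sixterm_to_takeshi} to obtain a pointed isomorphism $\Phi\colon\FKsixterm^\unit(A)\to\FKsixterm^\unit(B)$; if $\phi$ is an isomorphism then so is $\Phi$, by construction. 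The crucial observation making this composition legitimate is that the distinguished element of each of $\FKgunnar^\unit$, the intermediate invariant, and $\FKsixterm^\unit$ lives in \emph{one and the same} cokernel, namely that of Lemma~\ref{lem:seq_unit}, which involves only the groups $\FK_{\osi x}^0=M(\gosi x)$ and the maps $i$ between them, and the functors $\Fgunnar=\FGTgunnar\circ\Fgunnartakeshi$ and $\Ftakeshi$ are the identity on precisely these groups and maps; hence the basepoint is transported automatically and compatibly along the chain. The freeness hypothesis demanded by the pointed version of Theorem~\ref{thm:gunnartotakeshi} (freeness of $M(\gsi x)$ at non-open $x$) is exactly what we assumed.

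Third, since $X$ is an accordion space, $\FKsixterm=\FK$ by Theorem~\ref{bentmann}, so $\Phi$ is a pointed isomorphism $\bigl(\FK(A),[1_A]\bigr)\to\bigl(\FK(B),[1_B]\bigr)$. By the same theorem, $\FK$ strongly classifies stable Kirchberg $X$\nb-al\-ge\-bras with all simple subquotients in the bootstrap class, so the meta-theorem (Theorem~3.3 of~\cite{err_meta}, whose hypotheses are, as noted at the start of Section~\ref{sec:unital}, met by $\FK$ over accordion spaces) shows that $A\mapsto\bigl(\FK(A),[1_A]\bigr)$ classifies the unital, properly infinite members of this class up to $X$\nb-equi\-vari\-ant \Star{}isomorphism. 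As $A$ and~$B$ are unital Kirchberg $X$\nb-al\-ge\-bras---hence properly infinite---with simple subquotients in the bootstrap class, $\Phi$ lifts to an $X$\nb-equi\-vari\-ant \Star{}isomorphism $A\to B$. I expect the only point requiring care to be the bookkeeping in the second step, i.e.\ verifying that the pointed lifting propositions really do compose and that the three basepoints match under $\FGTgunnar$ and $\Ftakeshi$; this is routine for the reason just explained, but it is the reason one must invoke the pointed versions of Theorem~\ref{thm:sixterm_to_takeshi} and Theorem~\ref{thm:gunnartotakeshi} rather than the plain ones.
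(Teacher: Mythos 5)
Your proof is correct and follows essentially the same route as the paper's, whose own proof simply cites the meta-theorem (Theorem~3.3 of~\cite{err_meta}) together with Theorem~\ref{bentmann} and Corollary~\ref{cor:gunnartosixterm}. The extra care you take with the pointed bookkeeping---passing through the pointed analogue of Theorem~\ref{thm:gunnartotakeshi} and Proposition~\ref{pro:pointed_sixterm_to_takeshi} and checking that the basepoints match because the groups $M(\gosi x)$ are never forgotten---is precisely the content the paper sets up in Section~\ref{sec:unital} for this purpose.
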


\begin{proof}
This follows from Theorem~3.3 in~\cite{err_meta} together with Theorem~\ref{bentmann} and Corollary~\ref{cor:gunnartosixterm}.
\end{proof}

\begin{remark} \label{rem:unital}
There exist, up to homeomorphism, precisely four contractible unique path spaces with four points that are not accordion spaces. For all these spaces, the categories $\NT$ and $\Catsixterm$ coincide. In~\cite{arr}, Gunnar Restorff, Efren Ruiz and the first-named author showed that if $X$ is one of these spaces, then $\FK$ is a complete invariant for stable Kirchberg $X$\nb-al\-ge\-bras of real rank zero. Therefore, Corollaries~\ref{cor:unit_takeshi} and~\ref{cor:unit_gunnar} also hold for these spaces. Furthermore, the proof of Proposition~\ref{prop:diamond} also applies to $\FKgunnar^\unit$ and unital \csa s, hence Corollary~\ref{cor:unit_gunnar} also holds for the space $\mathcal D$.
\end{remark}

We now recall the unital version of the range result from~\cite{range_result}.

\begin{theorem}[\cite{range_result}*{Theorem~5.5}] \label{thm:range_unit}
Let $X$ be a finite $T_0$-space, and let $(M,m)$ be an exact pointed $\Catgunnar$-module.  Assume that for all $x\in X$, $M(\gsi x)$ is a free abelian group,
\[ \coker \bigl(M(\gobd x)\xrightarrow{i_{\gobd x}^{\gosi x}} M(\gosi x)\bigr) \]
 is finitely generated, and $\rank M(\gsi x)\leq \rank \coker \bigl(M(\gobd x)\xrightarrow{i_{\gobd x}^{\gosi x}} M(\gosi x)\bigr)$.

Then there exists a countable graph $E$ satisfying that all vertices in $E$ support at least two cycles, that $E^0$ is finite, that $C^*(E)$ is tight over~$X$, and that $\FKgunnar^\unit\bigl(C^*(E)\bigr)$ is isomorphic to $(M,m)$. By construction $C^*(E)$ is unital and purely infinite.

The graph $E$ can be chosen to have only regular vertices if \textup{(}and only if\textup{)} the rank of $M(\gsi x)$ coincides with the rank of the cokernel of $i\colon M(\gobd x)\to M(\gosi x)$ for all $x\in X$.  
If $E$ is chosen to have only regular vertices, then by construction $C^*(E)$ is a Cuntz--Krie\-ger algebra.
\end{theorem}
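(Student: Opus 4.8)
We sketch the strategy; the detailed construction is carried out in~\cite{range_result}*{Theorem~5.5}. The plan is to upgrade the graph construction behind the non-unital range result, Theorem~\ref{thm:range}, so that it additionally keeps track of the class of the unit and realises a prescribed value of~$m$.

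First I would extract the \emph{local data} at each point. By exactness of~\eqref{eq:Catgunnarrel1}, together with the relation~\eqref{eq:gunnarrelation1} which --- as in the real-rank-zero discussion of Remark~\ref{rem:K0lift} --- forces the connecting map out of $M(\gsi x)$ to vanish onto the relevant group, the simple subquotient to be realised at $x$ must have $\K_1$-group $M(\gsi x)$ and $\K_0$-group $\coker\bigl(M(\gobd x)\xrightarrow{i_{\gobd x}^{\gosi x}}M(\gosi x)\bigr)$. As $M(\gsi x)$ is free and this cokernel is finitely generated with $\rank M(\gsi x)$ at most its rank, one can write down a finite graph $F_x$, all of whose vertices support at least two cycles, such that $C^*(F_x)$ is simple and purely infinite with exactly this pair of $\K$-groups, and such that the image of the class of the unit in $\K_0\bigl(C^*(F_x)\bigr)$ may be prescribed with sufficient freedom; moreover $F_x$ can be taken with only regular vertices precisely when the two ranks agree.

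Next I would glue the $F_x$ into a single finite graph $E$ with vertex set $\bigsqcup_{x\in X}F_x^0$, adding for every covering relation $y\to x$ a finite set of edges from $F_y^0$ into $F_x^0$ whose effect on $\K$\nb-the\-ory realises the corresponding component $M(\gosi y)\to M(\gobd x)$ of the structure maps of~$M$. Exactness of~\eqref{eq:Catgunnarrel1} and~\eqref{eq:Catgunnarrel2}, and --- when $X$ is not a unique path space --- the double-path relations~\eqref{eq:gunnarrelation2}, are precisely what makes these choices mutually compatible. Using the exact sequences of Lemma~\ref{lem:exactseq} and Corollary~\ref{cor:STexactR} one then checks that the saturated hereditary subsets of $E^0$ correspond bijectively to $\Op(X)$, so that $C^*(E)$ is tight over~$X$, and that $\FKgunnar\bigl(C^*(E)\bigr)\cong M$. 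Since $E^0$ is finite, $C^*(E)$ is unital with $[1]=\sum_{v\in E^0}[p_v]$; this sum decomposes layer by layer along the $F_x^0$, and because the class of the unit in $\FKgunnar^\unit$ is recorded only modulo the identifications built from the sets $\inf(x,x')$, the freedom secured in the previous step lets us arrange that $u\bigl(C^*(E)\bigr)$ equals the class of a chosen lift of~$m$. Hence $\FKgunnar^\unit\bigl(C^*(E)\bigr)\cong(M,m)$; pure infiniteness follows from every vertex supporting at least two cycles, and the Cuntz--Krie\-ger statement from the corresponding property of the building blocks~$F_x$.

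The main obstacle is the gluing step: the connecting edge sets must be chosen so that \emph{simultaneously} the gauge-invariant ideal lattice comes out isomorphic to $\Op(X)$ and the induced maps on $\K$\nb-the\-ory are the prescribed module maps, which over non-unique-path spaces forces them to respect~\eqref{eq:gunnarrelation2} --- this is where the full strength of exactness of the $\Catgunnar$-module~$M$ is used. Once the edges are fixed, by contrast, tracking $[1]$ through the construction is routine.
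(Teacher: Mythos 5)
This theorem is not proved in the present paper at all: it is imported verbatim from the companion paper and the header citation \cite{range_result}*{Theorem~5.5} \emph{is} the paper's entire justification. Your proposal does the same thing --- it opens by deferring "the detailed construction" to that reference --- so at the level of logical bookkeeping you match the paper exactly, and there is no in-paper argument to compare your outline against.

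Judged as a proof, however, the outline has real gaps rather than omitted routine details. Every substantive step is asserted: the existence of the building blocks $F_x$ with prescribed $\K$-theory \emph{and} prescribed unit class, the choice of connecting edges realizing the module maps, the identification of the saturated hereditary subsets of $E^0$ with $\Op(X)$, and the computation of $\FKgunnar\bigl(C^*(E)\bigr)$ are all exactly the content of the cited theorem, and you explicitly name the gluing step as "the main obstacle" without resolving it. The unit-class bookkeeping is the one genuinely new ingredient relative to Theorem~\ref{thm:range}, and your treatment of it ("the freedom secured in the previous step lets us arrange...") is an assertion, not an argument; note also that by Lemma~\ref{lem:seq_unit} the cokernel in the definition of $\FKgunnar^\unit$ is \emph{isomorphic} to $\K_0(A)$, so $m$ must be hit exactly, not merely "modulo identifications". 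Two smaller inaccuracies: relation \eqref{eq:gunnarrelation1} does not force the connecting map out of $M(\gsi x)$ to vanish --- the relevant vanishing is that of the exponential map $\K_0\bigl(A(\si{x})\bigr)\to\K_1\bigl(A(\obd{x})\bigr)$ for real-rank-zero algebras, which is what identifies $\K_0$ of the simple subquotient with the stated cokernel (compare Proposition~\ref{prop:range}); and $E$ cannot in general be taken \emph{finite}, only $E^0$ can --- when $\rank M(\gsi x)$ is strictly smaller than the rank of the cokernel the graph must contain infinite emitters, which is precisely why the statement separates the "only regular vertices" case. If the intent is simply to cite the companion paper, the citation alone suffices; if the intent is to prove the theorem, the construction itself still has to be supplied.
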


\begin{corollary}
Let $X$ be a finite $T_0$-space and let $A$ be a unital \csa{} over~$X$ of real rank zero. Assume for all $x\in X$ that $\K_1\bigl(A(x)\bigr)$ is free, $\K_0\bigl(A(x)\bigr)$ is finitely generated, and $\rank\K_1\bigl(A(x)\bigr)\leq\rank\K_0\bigl(A(x)\bigr)$.

Then there exists a countable graph $E$ for which $C^*(E)$ is unital, purely infinite, and tight over~$X$ such that $\FKgunnar^\unit\bigl(C^*(E)\bigr)\cong\FKgunnar^\unit(A)$.
If $X$ is an EBP space, then automatically $\FKsixterm^\unit\bigl(C^*(E)\bigr)\cong\FKsixterm^\unit(A)$.

If furthermore $\rank\K_1\bigl(A(x)\bigr)=\rank\K_0\bigl(A(x)\bigr)$ for all $x\in X$, then $E$ can be chosen such that $C^*(E)$ is a purely infinite Cuntz--Krie\-ger algebra.
\end{corollary}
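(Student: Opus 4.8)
The plan is to run the pointed analogue of the proof of Corollary~\ref{cor:range}: feed the unital range theorem (Theorem~\ref{thm:range_unit}) through the pointed lifting results obtained above.

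First I would verify that $\FKgunnar^\unit(A)$ is an exact pointed $\Catgunnar$-module satisfying the hypotheses of Theorem~\ref{thm:range_unit}. Since $A$ has real rank zero, $\FKsixterm(A)$ is an exact \rrzero{} $\Catsixterm$-module by Remark~\ref{rem:K0lift}; hence $\FKgunnar(A)=\Fgunnar\bigl(\FKsixterm(A)\bigr)$ is an exact $\Catgunnar$-module by Corollary~\ref{cor:STexactR}, and the distinguished element $u(A)$ is well-defined by Lemma~\ref{lem:seq_unit}, so $\FKgunnar^\unit(A)=\bigl(\FKgunnar(A),u(A)\bigr)$ is an exact pointed $\Catgunnar$-module. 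Writing $M=\FKgunnar(A)$, we have $M(\gsi x)=\K_1\bigl(A(x)\bigr)$, which is free by hypothesis. The one computation needed is the identification of $\coker\bigl(M(\gobd x)\xrightarrow{i}M(\gosi x)\bigr)$: in the six-term exact sequence of the distinguished extension $A(\obd x)\into A(\osi x)\onto A(x)$ the index map $\K_0\bigl(A(x)\bigr)\to\K_1\bigl(A(\obd x)\bigr)$ vanishes, because $\FKsixterm(A)$ is \rrzero{}, so the surjection $\K_0\bigl(A(\osi x)\bigr)\onto\K_0\bigl(A(x)\bigr)$ induces an isomorphism $\coker\bigl(M(\gobd x)\to M(\gosi x)\bigr)\cong\K_0\bigl(A(x)\bigr)$. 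Consequently this cokernel is finitely generated, and $\rank M(\gsi x)=\rank\K_1\bigl(A(x)\bigr)\le\rank\K_0\bigl(A(x)\bigr)=\rank\coker\bigl(M(\gobd x)\to M(\gosi x)\bigr)$, which are exactly the conditions in Theorem~\ref{thm:range_unit}.

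Applying Theorem~\ref{thm:range_unit} to $(M,m)=\FKgunnar^\unit(A)$ then produces a countable graph~$E$ with $E^0$ finite and every vertex supporting at least two cycles, such that $C^*(E)$ is tight over~$X$, unital, purely infinite, and $\FKgunnar^\unit\bigl(C^*(E)\bigr)\cong\FKgunnar^\unit(A)$. If moreover $\rank\K_1\bigl(A(x)\bigr)=\rank\K_0\bigl(A(x)\bigr)$ for all $x\in X$, then the corresponding rank equality of Theorem~\ref{thm:range_unit} holds, so $E$ may be taken with only regular vertices, and then $C^*(E)$ is a purely infinite \CK{}.

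Finally, suppose $X$ is an EBP space. Then $C^*(E)$ is a purely infinite graph \csa{}, hence of real rank zero by~\cite{hongszymanski}, and $\FK_{\si x}^1\bigl(C^*(E)\bigr)=\K_1\bigl(C^*(E)(x)\bigr)$ is free for all $x\in X$ by Proposition~\ref{prop:range}. Together with the standing assumptions on~$A$, both $A$ and $C^*(E)$ are real-rank-zero \csa s over the EBP space~$X$ with free $\K_1$ at every (in particular, every non-open) point, so the pointed analogue of Corollary~\ref{cor:gunnartosixterm_cstar}---obtained by combining Proposition~\ref{pro:pointed_sixterm_to_takeshi} with the pointed version of Theorem~\ref{thm:gunnartotakeshi}, exactly as Corollary~\ref{cor:gunnartosixterm_cstar} follows from Theorems~\ref{thm:sixterm_to_takeshi} and~\ref{thm:gunnartotakeshi}---applies and lifts the pointed isomorphism $\FKgunnar^\unit(A)\cong\FKgunnar^\unit\bigl(C^*(E)\bigr)$ to a pointed isomorphism $\FKsixterm^\unit(A)\cong\FKsixterm^\unit\bigl(C^*(E)\bigr)$. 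The only non-formal step in the whole argument is the cokernel identification in the second paragraph; everything else is an assembly of the pointed range and lifting theorems already proved, which is where the real work resides.
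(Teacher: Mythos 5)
Your proof is correct and follows exactly the route the paper intends (the corollary is stated without proof, but it is the unital analogue of Corollary~\ref{cor:range}, assembled from Theorem~\ref{thm:range_unit}, Corollary~\ref{cor:STexactR}, and the pointed lifting results of Section~\ref{sec:unital}). The one substantive step you rightly supply---identifying $\coker\bigl(M(\gobd x)\to M(\gosi x)\bigr)$ with $\K_0\bigl(A(x)\bigr)$ via vanishing of the exponential map---is exactly what is needed to translate the hypotheses on $A$ into those of Theorem~\ref{thm:range_unit}.
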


\begin{corollary} \label{cor:extensions}
Let $X$ be an accordion space, and let $I\into A\onto B$ be an extension of \csa s.  Assume that $A$ is unital and tight over~$X$.

Then $A$ is a purely infinite Cuntz--Krie\-ger algebra if and only if
\begin{itemize}
\item $I$ is stably isomorphic to a purely infinite Cuntz--Krie\-ger algebra,
\item $B$ is a purely infinite Cuntz--Krie\-ger algebra,
\item the exponential map $\K_0(B)\to\K_1(I)$ vanishes.
\end{itemize}

\end{corollary}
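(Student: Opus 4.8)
The plan is to derive this from the characterization in Theorem~\ref{thm:nophantoms}, by showing that the conditions it places on~$A$ split across the extension exactly into the three conditions of the statement. Since $A$ is tight over~$X$, its ideal lattice is $\Op(X)$, so $I=A(U)$ for a unique open $U\subseteq X$ and $B=A(X\setminus U)$, with $I$ tight over~$U$ and $B$ tight over $X\setminus U$; here $U$ and $X\setminus U$ are finite disjoint unions of accordion spaces, hence EBP spaces to which the lifting and range results of this paper apply componentwise. The simple subquotients of~$A$ are the $A(\{x\})$ for $x\in X$; those with $x\in U$ are the simple subquotients of~$I$ and those with $x\in X\setminus U$ those of~$B$, with matching $\K$-theory. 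Finally, under the identifications $A(U)=I$ and $A(X\setminus U)=B$, the exponential map $\K_0(B)\to\K_1(I)$ of the extension is precisely the boundary map $\delta_{X\setminus U}^{U}$ acting on $\FK(A)$.

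For the ``only if'' direction, suppose $A$ is a purely infinite \CK. By Theorem~\ref{thm:nophantoms}, $A$ is unital, purely infinite, nuclear, separable and of real rank zero, and every simple subquotient satisfies the listed $\K$-theoretic conditions. Then $B=A/I$ is a purely infinite \CK: it is the quotient of the graph \csa{} $A$ by a gauge-invariant ideal (gauge-invariant since purely infinite \CK s satisfy Condition~(K)), hence the graph \csa{} of a quotient graph which is again finite and has no sources, and it is purely infinite because $A$ is. Likewise $I$, an ideal of the purely infinite graph \csa{} $A$, is a purely infinite graph \csa{} over~$U$; since its simple subquotients are among those of~$A$, their $\K$-theory is finitely generated with $\K_1$ free and $\rank\K_1=\rank\K_0$, so $\FKgunnar(I)$ lies in the range of Theorem~\ref{thm:range} and, $I$ being classified over the EBP space~$U$ (by Theorem~\ref{bentmann} together with Corollary~\ref{cor:gunnartosixterm}, applied componentwise), $I$ is stably isomorphic to a purely infinite \CK. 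Finally $\FK(A)$ is \rrzero{} by Remark~\ref{rem:K0lift}, so $\delta_{X\setminus U}^{U}=0$, i.e.\ the exponential map vanishes (equivalently, real-rank-zero \csa s have vanishing exponential maps, \cite{linrordam}*{Proposition~4}).

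For the ``if'' direction, suppose the three conditions hold; I check the hypotheses of Theorem~\ref{thm:nophantoms} for~$A$. It is unital by assumption. Since $I$ is stably isomorphic to a purely infinite \CK{} and $B$ is a purely infinite \CK, both~$I$ and~$B$ are purely infinite, nuclear and separable, and hence so is~$A$: an extension of purely infinite \csa s is purely infinite, and nuclearity and separability pass to extensions. Every simple subquotient of~$A$ is a simple subquotient of~$I$, hence of a purely infinite \CK, or of~$B$, so it lies in the bootstrap class and has finitely generated $\K$-theory with $\K_1$ free and $\rank\K_1=\rank\K_0$, by the necessity of these conditions for simple subquotients of purely infinite \CK s (cf.\ the introduction and Proposition~\ref{prop:range}). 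Finally, $I$ has real rank zero (stably isomorphic to a purely infinite \CK, which has real rank zero, and real rank zero is a stable-isomorphism invariant), $B$ has real rank zero, and the exponential map $\K_0(B)\to\K_1(I)$ vanishes by assumption, so $A$ has real rank zero. By Theorem~\ref{thm:nophantoms}, $A$ is a purely infinite \CK.

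The main obstacle --- and the place where the vanishing of the exponential map is consumed --- is the implication in the last step: an extension of two real-rank-zero \csa s whose exponential map vanishes is itself of real rank zero. This is a known extension theorem for real rank zero, going back to Brown and Pedersen and treated in~\cite{linrordam}, which I would cite rather than reprove. One could instead argue inside the framework of Sections~\ref{sec:filtktheory}--\ref{sec:takeshi_invariant}: since $A$ is purely infinite and tight over the accordion (hence EBP) space~$X$, it has real rank zero iff $\FK(A)$ is \rrzero{} (Remark~\ref{rem:K0lift}), and by relation~\eqref{eq:cond_of_delta} this reduces to the vanishing of the elementary boundary maps $\delta_{\overline{\{y\}}}^{\widetilde{\{x\}}}\colon\K_0\bigl(A(\overline{\{y\}})\bigr)\to\K_1\bigl(A(\widetilde{\{x\}})\bigr)$ for the arrows $x\to y$ of~$X$; arrows lying entirely inside~$U$ or inside $X\setminus U$ are handled by the \rrzero{}-ness of $\FK(I)$ and $\FK(B)$, while the crossing arrows must be controlled through the decomposition~\eqref{eq:cond_of_delta} of $\delta_{X\setminus U}^{U}=0$, which is the delicate point. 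A further technicality, already flagged above, is that $U$ and $X\setminus U$ need only be finite disjoint unions of accordion spaces, so the classification and range statements are used in their (immediate) componentwise extensions, and the ``only if'' direction treats~$I$ and~$B$ via the structure theory of graph \csa s and these extended statements rather than by reapplying Theorem~\ref{thm:nophantoms} verbatim.
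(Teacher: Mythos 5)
Your overall strategy is essentially the paper's: the paper also proves the ``only if'' direction by observing that $B$ and (stably) $I$ inherit the Cuntz--Krieger property and that real rank zero forces the exponential map to vanish, and proves the ``if'' direction by verifying that $A$ is a unital purely infinite Kirchberg $X$\nb-algebra of real rank zero with the right $\K$\nb-the\-ory and then invoking Theorem~\ref{thm:range_unit} together with Corollary~\ref{cor:unit_gunnar} (which is exactly what your appeal to Theorem~\ref{thm:nophantoms} packages). The bookkeeping of simple subquotients and the pure infiniteness of~$A$ (the paper cites \cite{tomswinter} for $\mathcal O_\infty$\nb-absorption passing to extensions) are fine.

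The genuine gap is precisely the step you flag as the main obstacle: deducing that $A$ has real rank zero. The ``known extension theorem'' you propose to cite --- that an extension of two real-rank-zero \csa s with vanishing exponential map has real rank zero --- is not available in this generality. Brown--Pedersen characterize real rank zero of an extension by real rank zero of $I$ and $B$ \emph{together with liftability of projections from $B$ to $A$}, and the latter is not a consequence of the vanishing of the single map $\K_0(B)\to\K_1(I)$; the cited \cite{linrordam}*{Proposition~4} gives only the necessity of the vanishing (real rank zero implies $\K_0$\nb-liftable), not sufficiency. What the paper actually does is use pure infiniteness in an essential way: by Pasnicu--R\o rdam (Theorem~4.2 of \cite{pasnicurordam}, see Remark~\ref{rem:K0lift}) a tight purely infinite \csa{} over~$X$ has real rank zero if and only if it is $\K_0$\nb-liftable, and $\K_0$\nb-liftability requires the vanishing of the exponential maps for \emph{all} pairs of nested ideals of~$A$, not just for the pair $(I,A)$. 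Passing from ``$I$ and $B$ are $\K_0$\nb-liftable and $\K_0(B)\to\K_1(I)$ vanishes'' to ``$A$ is $\K_0$\nb-liftable'' is exactly \cite{Bentmann:Intermediate_cancellation}*{Proposition~3.5}, a nontrivial six-term-sequence argument. Your fallback sketch via the decomposition \eqref{eq:cond_of_delta} does not substitute for it: knowing that the sum in \eqref{eq:cond_of_delta} for $\delta_{X\setminus U}^{U}$ vanishes does not isolate the individual crossing terms $\delta_{\overline{\{y\}}}^{\widetilde{\{x\}}}$, and the relations \ref{it:id} and \ref{it:dr} of Proposition~\ref{pro:relations} restrict $\delta$ only to relatively open subsets of $C$ and relatively closed subsets of $U$, so they do not cut $\delta_{X\setminus U}^{U}$ down to $\delta_{\overline{\{y\}}}^{\widetilde{\{x\}}}$. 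With the chain ``$\mathcal O_\infty$\nb-absorption by \cite{tomswinter}, $\K_0$\nb-liftability by \cite{Bentmann:Intermediate_cancellation}*{Proposition~3.5}, real rank zero by \cite{pasnicurordam}'' in place of your cited extension theorem, the rest of your argument goes through.
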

\begin{proof}
Recall that \CK s are purely infinite if and only if they have real rank zero.
Assume that $A$ is a purely infinite \CK{}. It is well-known that then $B$ is also a purely infinite \CK{} and $I$ is stably isomorphic to one.
By Theorem~4.2 of~\cite{pasnicurordam}, $\K_0(B)\to\K_1(I)$ vanishes since $A$ has real rank zero and therefore is $\K_0$-liftable, see~Remark~\ref{rem:K0lift}.

Now, assume that $B$ is a purely infinite \CK{}, that $I$ is stably isomorphic to one, and that the map $\K_0(B)\to\K_1(I)$ vanishes.
By Theorem~4.3 of~\cite{tomswinter}, $A$ is $\mathcal O_\infty$-absorbing since $B$ and $I$ are.
Since $B$ and $I$ are $\K_0$-liftable and $\K_0(B)\to\K_1(I)$ vanishes, $A$ is also $\K_0$-liftable (that is, $\FK(A)$ is \rrzero{}) by \cite{Bentmann:Intermediate_cancellation}*{Proposition~3.5}. So by pure infiniteness of $A$ it therefore follows from Theorem~4.2 of~\cite{pasnicurordam} that $A$ has real rank zero.
For all $x\in X$, $\K_1\bigl(A(x)\bigr)$ is free since $B$ and $I$ are stably isomorphic to \CK s. So by Theorem~\ref{thm:range_unit} there exists a real-rank-zero \CK{} $C$ that is tight over~$X$ and has $\FKgunnar^\unit(A)\cong\FKgunnar^\unit(C)$.  By Corollary~\ref{cor:unit_gunnar}, $A$ and $C$ are isomorphic.
\end{proof}

\begin{remark}
Corollary~\ref{cor:extensions} holds in fact for all spaces~$X$ for which $\FKgunnar^\unit$ is a complete invariant for unital Kirchberg $X$\nb-al\-ge\-bras $A$ where $A(x)$ is in the bootstrap class and $\K_1\bigl(A(x)\bigr)$ is free for all $x\in X$, see~Remark~\ref{rem:unital}.
\end{remark}

\section{Ordered filtered \texorpdfstring{$\K$}{K}-theory} \label{sec:order}
The notion of ordered filtered $\K$\nb-the\-ory was introduced by S\o ren Eilers, Gunnar Restorff, and Efren Ruiz in \cite{err} to classify certain (not necessarily purely infinite) graph \csa{}s of real rank zero. We hope that the results in this section will be useful for future work in this direction.

Recall that for a \csa{} $A$, a class in $\K_0(A)$ of the from $[p]_0$ for a projection~$p$ in $M_n(A)$ for some $n\in\N$ is called \emph{positive}.
The \emph{positive cone} $\K_0(A)^+$ consists of all positive elements in $\K_0(A)$.
For two \csa s $A$ and $B$, a group homomorphism $\phi\colon\K_0(A)\to\K_0(B)$ is called \emph{positive} if $\phi(\K_0(A)^+)\subseteq\K_0(B)^+$, and a group isomorphism $\phi\colon\K_0(A)\to\K_0(B)$ is called an \emph{order isomorphism} if $\phi(\K_0(A)^+)=\K_0(B)^+$.

Note that for a finite topological space $X$,  a locally closed subset $Y$ of $X$, and an open subset $U$ of $Y$, the maps $i_U^Y\colon\K_0\bigl(A(U)\bigr)\to\K_0\bigl(A(Y)\bigr)$ and $r_Y^{Y\setminus U}\colon\K_0\bigl(A(Y)\bigr)\to\K_0\bigl(A(Y\setminus U)\bigr)$ are positive.

\begin{definition}
For \csa s $A$ and $B$ over a finite topological space $X$, an $\Catsixterm$-module homomorphism $\phi\colon\FKsixterm(A)\to\FKsixterm(B)$ is called \emph{positive} if the induced maps $\FK_Y^0(A)\to\FK_Y^0(B)$ are positive for all $Y\in\LC(X)$, and an $\Catsixterm$-module isomorphism $\FKsixterm(A)\to\FKsixterm(B)$ is called  an \emph{order isomorphism} if the induced isomomorphisms are order isomorphisms.
For the reduced versions $\FKgunnar$, $\FKtakeshi$, and $\FKgunnartakeshi$ of filtered $\K$\nb-the\-ory, analogous definitions apply.
\end{definition}

We are indebted to Mikael R\o rdam for the elegant proof of the following lemma.

\begin{lemma} \label{lemma:positivecone}
Let $A$ be a real-rank-zero \csa{} and let $I$ and $J$ be \textup{(}closed, two-sided\textup{)} ideals in $A$ satisfying $I+J=A$. Then any projection $p$ in $A$ can be written as $p=q+q'$ with a projection~$q$ in~$I$ and a projection~$q'$ in~$J$.
\end{lemma}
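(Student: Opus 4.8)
The plan is to work inside the quotient $A/(I\cap J)$, where the images of $I$ and $J$ become complementary ideals whose intersection is zero, and then lift. First I would note that since $I+J=A$, we have a natural identification $A/(I\cap J)\cong (I/(I\cap J))\oplus(J/(I\cap J))$ as a direct sum of \csa s: indeed $I/(I\cap J)$ and $J/(I\cap J)$ are ideals in $A/(I\cap J)$ with zero intersection whose sum is everything. Under this identification the image $\bar p$ of $p$ in $A/(I\cap J)$ splits as $\bar p=(\bar p_I,\bar p_J)$, a pair of projections, one in each summand. Equivalently, $\bar p=\bar q_0+\bar q_0'$ where $\bar q_0$ and $\bar q_0'$ are orthogonal projections lying in $I/(I\cap J)$ and $J/(I\cap J)$ respectively; in particular $\bar q_0$ is a projection in the image of $I$ and $\bar q_0'$ a projection in the image of $J$.

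The heart of the argument is then a lifting step. Since $A$ has real rank zero, so does its ideal $I\cap J$ (real rank zero passes to ideals), and therefore projections lift along the quotient map $I\to I/(I\cap J)$ — more precisely, the surjection $A\onto A/(I\cap J)$ has the property that projections in the image of $I$ lift to projections in $I$, because $I\onto I/(I\cap J)$ is a quotient of a real-rank-zero \csa{} by an ideal of real rank zero, and such quotient maps lift projections. So lift $\bar q_0$ to a projection $q\in I$. Now $p-q$ is a self-adjoint element of $A$ whose image in $A/(I\cap J)$ is the projection $\bar q_0'\in J/(I\cap J)$; I would like to replace $p-q$ by an actual projection $q'\in J$ with $q'=p-q$ exactly, not merely modulo $I\cap J$.

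To get exact equality rather than equality modulo the ideal, the cleanest route is to observe that $q\le p$: since $\bar q_0\le\bar p$ in $A/(I\cap J)$ and $q,p$ are projections with $q\in I$, one can choose the lift $q$ so that $q\le p$ (lift $\bar q_0$ inside the hereditary subalgebra $pAp$, which again has real rank zero, using that $\bar q_0\le \bar p$ so $\bar q_0$ lies in the image of $pAp$). Then $q':=p-q$ is automatically a projection in $A$, and its image in $A/(I\cap J)$ is $\bar q_0'\in J/(I\cap J)$, so $q'\in J+(I\cap J)=J$. This yields the desired decomposition $p=q+q'$ with $q\in I$, $q'\in J$.

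\textbf{Main obstacle.} The genuinely delicate point is the lifting-with-domination step: arranging the lift $q$ of $\bar q_0$ to satisfy $q\le p$ so that $p-q$ comes out to be an honest projection. This is where real rank zero is essential and must be applied to the hereditary subalgebra $pAp$ (and its ideal $pAp\cap I\cap J$); the bookkeeping of which subquotients inherit real rank zero, and the standard fact that projections lift along quotients by real-rank-zero ideals, is the substance of the proof. Everything else — the splitting $A/(I\cap J)\cong I/(I\cap J)\oplus J/(I\cap J)$ and the identification of images of $I$ and $J$ — is routine.
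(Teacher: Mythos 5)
Your argument is correct, but it takes a genuinely different route from the paper's. You work modulo $I\cap J$, split $A/(I\cap J)\cong I/(I\cap J)\oplus J/(I\cap J)$, and then invoke the Brown--Pedersen projection-lifting theorem (projections lift along quotients of real-rank-zero \csa s) inside the corner $pAp$ to produce a lift $q\leq p$ of the $I$-component of $\bar p$; the remaining details check out, since any projection in $pAp$ whose image lies in $\pi(I)$ automatically belongs to $I+(I\cap J)=I$, and likewise $q'=p-q$ lands in $J$. The paper instead argues directly and more elementarily: it writes $p=a+b$ with $a=pap\in pIp$ and $b=pbp\in pJp$, uses only the fact that the hereditary subalgebra $pIp$ has an approximate unit of projections to find a projection $q\in pIp$ with $\|a-aq\|<1$, sets $q'=p-q$, and shows by a short norm estimate that the image of $q'$ in $A/J$ is a projection of norm strictly less than $1$, hence zero, so $q'\in J$. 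Your version buys conceptual transparency and avoids epsilon-estimates, at the price of relying on the full Brown--Pedersen lifting theorem (and heredity of real rank zero to corners and ideals), whereas the paper's proof needs only the approximate-unit characterization together with the observation that a projection of norm less than $1$ vanishes. Both are valid proofs of the lemma.
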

\begin{proof}
Let $p$ a projection in $A$ be given and write $p=a+b$ with $a\in I$ and $b\in J$.  We may assume that $a=pap$ and $b=pbp$.
As $A$ has real rank zero, the hereditary subalgebra $pIp$ has an approximate unit of projections, so there exists a projection $q$ in $pIp$ satisfying $\|a-aq\|<1$. Since $q=pqp$, $q\leq p$ and we may define a projection $q'$ as $q'=p-q$.
It remains to prove $q' \in J$. 
We have 
\[
\|q'-q'bq'\| 
= \|q'(p-b)q'\| = \|q'a(p-q)\| \leq \|q'\| \|a-aq\|<1. 
\]
Since $q'bq'\in J$, 
the image of $q'$ in the quotient $A/J$ is 
a projection of norm strictly less than $1$. 
Since such a projection is $0$, we get $q' \in J$. 
\end{proof}

The following theorem is a version of Corollary~\ref{cor:sixterm_to_takeshi} taking the order into account.

\begin{theorem}
Let $X$ be an EBP space, and let $A$ and $B$ be \csa s over~$X$ of real rank zero.
Then for any order isomorphism $\phi\colon\FKtakeshi(A)\to\FKtakeshi(B)$  there is a unique order isomorphism $\Phi\colon\FKsixterm(A)\to\FKsixterm(B)$ satisfying $\Ftakeshi(\Phi)=\phi$.
\end{theorem}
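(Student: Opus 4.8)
The plan is to upgrade Corollary~\ref{cor:sixterm_to_takeshi} (and the underlying Theorem~\ref{thm:sixterm_to_takeshi}) by keeping track of positive cones at every step. Given an order isomorphism $\phi\colon\FKtakeshi(A)\to\FKtakeshi(B)$, Corollary~\ref{cor:sixterm_to_takeshi} already produces a unique $\Catsixterm$-module isomorphism $\Phi\colon\FKsixterm(A)\to\FKsixterm(B)$ with $\Ftakeshi(\Phi)=\phi$; uniqueness is for free, so the only thing to prove is that this $\Phi$ is automatically an order isomorphism, that is, that the induced maps $\FK_Y^0(A)\to\FK_Y^0(B)$ carry $\K_0\bigl(A(Y)\bigr)^+$ onto $\K_0\bigl(B(Y)\bigr)^+$ for every $Y\in\LC(X)$.

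First I would recall the description of $\Phi$ on even groups from the proof of Theorem~\ref{thm:sixterm_to_takeshi}: for open $U$ the group $M(U,0)$ is reconstructed from the values $M(\osi x,0)$ on the canonical base via the cosheaf formula \eqref{eq:cosheaf_condition_on_basis}, so $M(U,0)$ is a cokernel of a map between finite direct sums of the $N(\gosi x)=\FK^0_{\osi x}$; and for general locally closed $Y=V\setminus U$ the group $M(Y,0)$ is a further cokernel of $i_U^V$. The key structural fact I would use is that all the maps appearing in these (co)kernel presentations---the $i$'s between $\FK^0$ of opens---are \emph{positive}, as noted just before the Definition in Section~\ref{sec:order}; consequently $\K_0\bigl(A(U)\bigr)^+$ for open $U$ is exactly the image of $\bigoplus_x \K_0\bigl(A(\osi x)\bigr)^+$ under the cosheaf surjection, by Lemma~\ref{lemma:positivecone} applied inductively to decompose a projection in $A(U)$ across the two ideals in a two-element open cover. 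The same applies one level further down to express $\K_0\bigl(A(Y)\bigr)^+$ for locally closed $Y$ as the image of $\K_0\bigl(A(V)\bigr)^+$ under the positive surjection $r_V^Y$; this again uses real rank zero (every $\K_0$-class in a subquotient of a real-rank-zero algebra lifts to a projection, e.g.\ via $\K_0$-liftability cited in Remark~\ref{rem:K0lift}, combined with Lemma~\ref{lemma:positivecone}). Thus for every $Y\in\LC(X)$ the cone $\K_0\bigl(A(Y)\bigr)^+$ is the image of $\bigoplus_{x}\K_0\bigl(A(\osi x)\bigr)^+$ under a composite of the canonical (positive, surjective) transformations in $\Catsixterm$; the analogous statement holds for $B$.

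Given this, positivity of $\Phi_Y^0$ is immediate: by naturality $\Phi_Y^0$ intertwines the surjections from the $\osi x$-groups with $\phi_{\gosi x}^0$, and $\phi$ being an order isomorphism means each $\phi_{\gosi x}^0$ sends $\K_0\bigl(A(\osi x)\bigr)^+$ onto $\K_0\bigl(B(\osi x)\bigr)^+$; chasing these through the commuting squares shows $\Phi_Y^0\bigl(\K_0(A(Y))^+\bigr)=\K_0(B(Y))^+$. The same argument applied to $\Phi^{-1}=\Ftakeshi^{-1}(\phi^{-1})$ gives the reverse inclusion, so $\Phi$ is an order isomorphism.

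The main obstacle I anticipate is the first half of the second paragraph: proving that for \emph{every} locally closed $Y$, and not just for opens, the positive cone $\K_0\bigl(A(Y)\bigr)^+$ is generated (as the image under the canonical positive maps) by the cones on the canonical base. For opens this is a clean induction over the size of a cover using Lemma~\ref{lemma:positivecone}. For locally closed $Y=V\setminus U$ one must know that every projection in the subquotient $A(V)/A(U)$ lifts to a projection in $A(V)$; this is exactly the $\K_0$-liftability of real-rank-zero algebras invoked in Remark~\ref{rem:K0lift} applied to the subquotient, together with the fact that $r_V^Y$ is positive and surjective on $\K_0$ (surjectivity because the connecting map $\delta$ vanishes by \rrzero{}-ness). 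Assembling these two facts carefully---and checking the naturality squares relating the base-presentation for $A$, for $B$, and the isomorphism $\Phi$---is the technical heart; everything else is a diagram chase parallel to the proof of Theorem~\ref{thm:sixterm_to_takeshi}.
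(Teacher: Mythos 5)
Your proposal is correct and follows essentially the same route as the paper: open sets are handled via the cosheaf presentation over the canonical base together with Lemma~\ref{lemma:positivecone}, locally closed sets via the positive surjection from an open set together with projection lifting, and the conclusion is a naturality diagram chase. The only slip is attributing the projection-lifting step to $\K_0$-liftability and Remark~\ref{rem:K0lift}; the paper instead invokes Brown--Pedersen's characterization of real rank zero, which is the correct source for lifting projections (in all matrix amplifications) from the subquotient $A(Y)$ to $A(U)$, whereas $\K_0$-liftability only gives surjectivity at the level of groups.
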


\begin{proof}
By Corollary~\ref{cor:sixterm_to_takeshi}, $\Phi$ is an isomorphism if and only if $\phi$ is.  Assume that $\phi$ is an order isomorphism, and let us show first for $Y\in\Op(X)$ and then for $Y\in\LC(X)$ that $\Phi_Y^0$ is an order isomorphism.

For $U$ an open subset of $X$, the following diagram has commuting squares and its rows are exact by Lemmas~\ref{lem:cosheaf_equivalence} and~\ref{lem:presheaf_on_canonical_base_is_sheaf}.
\[ \xymatrix@C+25pt{
\displaystyle\bigoplus_{y\in\inf(x,x')} \FK_{\widetilde{\{y\}}}^0(A) \ar[r]^-{{\begin{pmatrix} i_{\osi y}^{\osi x} & -i_{\osi y}^{\osi x'} \end{pmatrix}}}\ar[d]^-{(\phi_{\widetilde{\{y\}}}^0)} & \displaystyle\bigoplus_{x\in U} \FK_{\widetilde{\{x\}}}^0(A) \ar[r]^-{(i_{\widetilde{\{x\}}}^U)} \ar[d]^-{(\phi_{\widetilde{\{x\}}}^0)} & \FK_U^0(A) \ar[r] \ar[d]^-{\Phi_U^0} & 0 \\
\displaystyle\bigoplus_{y\in\inf(x,x')} \FK_{\widetilde{\{y\}}}^0(B) \ar[r]^-{{\begin{pmatrix} i_{\osi y}^{\osi x} & -i_{\osi y}^{\osi x'} \end{pmatrix}}} & \displaystyle\bigoplus_{x\in U} \FK_{\widetilde{\{x\}}}^0(B) \ar[r]^-{(i_{\widetilde{\{x\}}}^U)} & \FK_U^0(B) \ar[r] & 0
} \]
Since $(A\bigl(\widetilde{\{x\}})\bigr)_{x\in U}$ is a finite collection of ideals in $A(U)$, we see by Lemma~\ref{lemma:positivecone} that the map $(i_{\widetilde{\{x\}}}^U)\colon\displaystyle\bigoplus_{x\in U}\K_0(A({\widetilde{\{x\}}})\to\K_0\bigl(A(U)\bigr)$ surjects $\displaystyle\bigoplus_{x\in U}\K_0(A({\widetilde{\{x\}}})^+$ onto $\K_0\bigl(A(U)\bigr)^+$.
Similarly, the map $(i_{\widetilde{\{x\}}}^U)\colon\displaystyle\bigoplus_{x\in U}\K_0(B({\widetilde{\{x\}}})\to\K_0\bigl(B(U)\bigr)$ surjects $\displaystyle\bigoplus_{x\in U}\K_0(B({\widetilde{\{x\}}})^+$ onto $\K_0\bigl(B(U)\bigr)^+$.
A simple diagram chase therefore shows that $\Phi_U^0$ is an order isomorphism since the map $\phi_{\widetilde{\{x\}}}^0$ is an order isomorphism for all $x\in U$.

For a locally closed subset~$Y$ of~$X$, choose open subsets~$U$ and~$V$ of~$X$ satisfying $V\subseteq U$ and $U\setminus V=Y$.  Then $\Phi_U^0$ is an order isomorphism.  Consider the following diagram with exact rows and commuting squares.
\[ \xymatrix{
\FK_V^0(A) \ar[r]^-{i_V^U} \ar[d]^-{\Phi_V^0} & \FK_U^0(A) \ar[r]^-{r_U^Y} \ar[d]^-{\Phi_U^0} & \FK_Y^0(A) \ar[r] \ar[d]^-{\Phi_Y^0} & 0 \\
\FK_V^0(B) \ar[r]^-{i_V^U} & \FK_U^0(B) \ar[r]^-{r_U^Y} & \FK_Y^0(B) \ar[r] & 0
} \]
In~\cite{brownpedersen}*{Theorem 3.14}, Lawrence G.~Brown and Gert K.~Pedersen showed that given an extension $I\into C\onto C/I$ of \csa s, the \csa{} $C$ has real rank zero if and only if  $I$ and $C/I$ have real rank zero and projections in $C/I$ lift to projections in $C$.
Thus, since~$A$ and therefore $M_n\otimes A(U)$ for all~$n$ has real rank zero, the map $i_U^Y\colon\K_0\bigl(A(U)\bigr)\to\K_0\bigl(A(Y)\bigr)$ surjects $\K_0\bigl(A(U)\bigr)^+$ onto $\K_0\bigl(A(Y)\bigr)^+$.
Similarly, the map $i_U^Y\colon\K_0\bigl(B(U)\bigr)\to\K_0\bigl(B(Y)\bigr)$ surjects $\K_0\bigl(B(U)\bigr)^+$ onto $\K_0\bigl(B(Y)\bigr)^+$.  
A simple diagram chase therefore shows that $\Phi_Y^0$ is an order isomorphism.
\end{proof}

We have the following ordered analogs of Theorem~\ref{thm:gunnartotakeshi} and Corollary~\ref{cor:gunnartosixterm_cstar}.

\begin{theorem}
Let $X$ be a unique path space, and let $A$ and $B$ be \csa s over~$X$ of real rank zero.  Assume that $\K_1\bigl(A(\{x\})\bigr)$ and $\K_1\bigl(B(\{x\})\bigr)$ are free abelian groups for all non-open points $x\in X$.
Then for any order isomorphism $\phi\colon\FKgunnar(A)\to\FKgunnar(B)$ there exists a \textup{(}not necessarily unique\textup{)} order isomorphism $\Phi\colon\FKgunnartakeshi(A)\to\FKgunnartakeshi(B)$ that satisfies $\FGTgunnar(\Phi)=\phi$.
\end{theorem}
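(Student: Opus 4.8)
The plan is to obtain this as a quick consequence of Theorem~\ref{thm:gunnartotakeshi}, once one notices that the order structure of $\FKgunnartakeshi$ is carried entirely by exactly those components of the module that the functor $\FGTgunnar$ leaves untouched. Among the generating objects $\gsi x$, $\tcsi x$, $\gosi x$ of $\Catgunnartakeshi$, only the objects $\gosi x$ are $0$-graded (they map to $(\osi x,0)$ in $\Catsixterm$), so for a real-rank-zero \csa{} $C$ over~$X$ the order datum of $\FKgunnartakeshi(C)$ consists precisely of the positive cones of the groups $\FK_{\osi x}^0(C)=\K_0\bigl(C(\osi x)\bigr)$, $x\in X$; and $\FGTgunnar$ acts as the identity on these groups.

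First I would verify that the hypotheses of Theorem~\ref{thm:gunnartotakeshi} are in force. Since $A$ and $B$ have real rank zero, $\FKsixterm(A)$ and $\FKsixterm(B)$ are exact and \rrzero{} by Remark~\ref{rem:K0lift}, so $\FKgunnartakeshi(A)=\Fgunnartakeshi\bigl(\FKsixterm(A)\bigr)$ and $\FKgunnartakeshi(B)$ are exact $\Catgunnartakeshi$-modules by the lemma immediately preceding Theorem~\ref{thm:gunnartotakeshi}; and the freeness hypothesis says exactly that $\FKgunnartakeshi(A)(\gsi x)=\K_1\bigl(A(\{x\})\bigr)$ and $\FKgunnartakeshi(B)(\gsi x)=\K_1\bigl(B(\{x\})\bigr)$ are free for all non-open $x\in X$. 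Moreover, $\FGTgunnar\circ\FKgunnartakeshi=\FKgunnar$: since $X$ is a unique path space, Lemma~\ref{upp} yields $\obd{x}=\bigsqcup_{y\to x}\osi y$, hence $\FK_{\obd{x}}^0(C)=\bigoplus_{y\to x}\FK_{\osi y}^0(C)$, and along this splitting the boundary map $\delta_{\gsi x}^{\gobd x}$ of $\FKgunnar(C)$ decomposes as $\bigl(i_{\gsi x}^{\tcsi x}\delta_{\tcsi x}^{\gosi y}\bigr)_{y\to x}$, which is exactly the recipe defining $\FGTgunnar$ (compare the equivalent description of $\FKgunnar$ in~\cite{range_result}*{Definition~3.4}). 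Thus $\phi$ may be regarded as an $\Catgunnar$-module isomorphism $\FGTgunnar\bigl(\FKgunnartakeshi(A)\bigr)\to\FGTgunnar\bigl(\FKgunnartakeshi(B)\bigr)$, and Theorem~\ref{thm:gunnartotakeshi} produces a $\Catgunnartakeshi$-module isomorphism $\Phi\colon\FKgunnartakeshi(A)\to\FKgunnartakeshi(B)$ with $\FGTgunnar(\Phi)=\phi$.

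It then remains to upgrade $\Phi$ to an \emph{order} isomorphism. Because $\FGTgunnar$ is the identity on $\gosi x$-components, one has $\Phi_{\gosi x}=\bigl(\FGTgunnar(\Phi)\bigr)_{\gosi x}=\phi_{\gosi x}$ for every $x\in X$; and $\phi_{\gosi x}\colon\K_0\bigl(A(\osi x)\bigr)\to\K_0\bigl(B(\osi x)\bigr)$ is an order isomorphism, since $\phi$ is an order isomorphism of $\FKgunnar$ and $\gosi x$ is one of the $0$-graded generating objects of $\Catgunnar$. As the $\gosi x$ are the only $0$-graded generating objects of $\Catgunnartakeshi$, it follows that $\Phi$ is an order isomorphism, which completes the argument.

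I do not anticipate a genuine obstacle: the proof is essentially a formality given Theorem~\ref{thm:gunnartotakeshi}, and the only point requiring care is the identification $\FGTgunnar\circ\FKgunnartakeshi=\FKgunnar$, i.e., the verification that $\delta_{\gsi x}^{\gobd x}$ really does decompose as $\bigl(i_{\gsi x}^{\tcsi x}\delta_{\tcsi x}^{\gosi y}\bigr)_{y\to x}$ along $\obd{x}=\bigsqcup_{y\to x}\osi y$. This is the only place where the unique path space hypothesis is used; it follows from the relations in Proposition~\ref{pro:relations} and is already implicit in the construction of reduced filtered $\K$\nb-the\-ory in~\cite{range_result}.
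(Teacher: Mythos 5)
Your proposal is correct and follows exactly the paper's route: the paper's own proof is the one-line observation that $\FGTgunnar$ only forgets $\K_1$-groups (which carry no order data), so the claim follows immediately from Theorem~\ref{thm:gunnartotakeshi}. Your write-up simply unpacks that line — verifying the hypotheses of Theorem~\ref{thm:gunnartotakeshi}, the identification $\FGTgunnar\circ\FKgunnartakeshi=\FKgunnar$ via $\obd{x}=\bigsqcup_{y\to x}\osi y$, and that $\Phi_{\gosi x}=\phi_{\gosi x}$ on the only $0$-graded components — all of which is accurate.
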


\begin{proof}
Since the functor $\FGTgunnar$ only forgets $\K_1$-groups, the desired follows immediately from Theorem~\ref{thm:gunnartotakeshi}.
\end{proof}

\begin{corollary}
Let $X$ be an EBP space, and let $A$ and $B$ be \csa s over~$X$ of real rank zero.  Assume that $\K_1\bigl(A(\{x\})\bigr)$ and $\K_1\bigl(B(\{x\})\bigr)$ are free abelian groups for all $x\in X$.
Then for any order isomorphism $\phi\colon\FKgunnar(A)\to\FKgunnar(B)$ there exists a order isomorphism $\Phi\colon\FKsixterm(A)\to\FKsixterm(B)$ that satisfies $\Fgunnar(\Phi)=\phi$.
\end{corollary}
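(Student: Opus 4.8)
The plan is to run, mutatis mutandis, the proof of the unordered Corollary~\ref{cor:gunnartosixterm}, checking that the order structure is carried along at each step. So let $\phi\colon\FKgunnar(A)\to\FKgunnar(B)$ be an order isomorphism. Since $A$ and $B$ have real rank zero, $\FKsixterm(A)$ and $\FKsixterm(B)$ are exact \rrzero{} $\Catsixterm$-modules by Remark~\ref{rem:K0lift}, and by hypothesis $\FK^1_{\si x}(A)$ and $\FK^1_{\si x}(B)$ are free for all $x\in X$, hence in particular for all non-open points; so the hypotheses of both the ordered analogue of Theorem~\ref{thm:gunnartotakeshi} and the ordered version of Corollary~\ref{cor:sixterm_to_takeshi} proved above are in force.

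I would first apply the ordered analogue of Theorem~\ref{thm:gunnartotakeshi} to obtain an order isomorphism $\Psi\colon\FKgunnartakeshi(A)\to\FKgunnartakeshi(B)$ with $\FGTgunnar(\Psi)=\phi$. Restricting $\Psi$ along the embedding $\Cattakeshi\into\Catgunnartakeshi$ gives a homomorphism $\bar\Psi\colon\FKtakeshi(A)\to\FKtakeshi(B)$; since this restriction merely discards the components at the objects $\gsi x$ and keeps those at $\tcsi x$ and $\gosi x$ -- in particular all the $\K_0$-groups $\FK^0_{\osi x}$, which carry the order -- the map $\bar\Psi$ is again an order isomorphism. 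Feeding $\bar\Psi$ into the ordered version of Corollary~\ref{cor:sixterm_to_takeshi} then produces an order isomorphism $\Phi\colon\FKsixterm(A)\to\FKsixterm(B)$ with $\Ftakeshi(\Phi)=\bar\Psi$.

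It remains to verify $\Fgunnar(\Phi)=\phi$, which -- just as in the unordered Corollary~\ref{cor:gunnartosixterm} -- is the only step calling for a genuine (if routine) computation, carried out on the three families of generators $\gsi x$, $\gobd x$, $\gosi x$ of $\Catgunnar$. On $\gosi x$ one reads off $\Fgunnar(\Phi)_{\gosi x}=\Phi_{(\osi x,0)}=\bar\Psi_{\gosi x}=\Psi_{\gosi x}=\phi_{\gosi x}$. On $\gobd x$, since $X$ is a unique path space the set $\widetilde\partial\{x\}$ is the topologically disjoint union $\bigsqcup_{y\to x}\osi y$ by Lemma~\ref{upp}, so relation~\ref{it:biprodcut} of Proposition~\ref{pro:relations} identifies $\Fgunnar(\Phi)_{\gobd x}$ with $\bigoplus_{y\to x}\phi_{\gosi y}=\phi_{\gobd x}$. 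On $\gsi x$ one uses the exactness and \rrzero{} property of $\FKsixterm(A)$ together with the exactness of \eqref{eq:RKexatness2}: both the value of $\FKgunnartakeshi(A)$ at $\gsi x$ and the group $\FK^1_{\si x}$ are identified with $\ker\bigl(r\colon\FK^1_{\csi x}\to\FK^1_{\overline\partial\{x\}}\bigr)$, and $\Phi_{(\si x,1)}$ equals the restriction of $\Phi_{(\csi x,1)}=\Psi_{\tcsi x}$ to that kernel, which in turn equals $\Psi_{\gsi x}=\phi_{\gsi x}$; hence $\Fgunnar(\Phi)_{\gsi x}=\Phi_{(\si x,1)}=\phi_{\gsi x}$. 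Thus $\Fgunnar(\Phi)=\phi$ and $\Phi$ is an order isomorphism. I expect the only real obstacle to be precisely this last identification; no analytic input is needed beyond what already enters the two ordered theorems above, namely Lemma~\ref{lemma:positivecone} and the Brown--Pedersen real-rank-zero criterion.
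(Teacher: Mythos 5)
Your proof is correct and follows exactly the paper's route: the paper's own proof is literally ``Combine the previous two theorems,'' i.e.\ apply the ordered analogue of Theorem~\ref{thm:gunnartotakeshi} to pass from $\FKgunnar$ to $\FKgunnartakeshi$, restrict to $\FKtakeshi$, and then apply the ordered version of Corollary~\ref{cor:sixterm_to_takeshi}. Your explicit verification that $\Fgunnar(\Phi)=\phi$ on the generators $\gsi x$, $\gobd x$, $\gosi x$ (via the biproduct decomposition of $\widetilde\partial\{x\}$ and the identification of $\FK^1_{\si x}$ with a kernel using exactness and the \rrzero{} property) is exactly the bookkeeping the paper leaves implicit, and it is done correctly.
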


\begin{proof}
Combine the previous two theorems.
\end{proof}

\section{Corollaries for accordion spaces}
  \label{sec:accordion}
  
We summarize our results in the most satisfying case of accordion spaces. By combining Theorems~\ref{bentmann}, \ref{thm:sixterm_to_takeshi}, \ref{thm:range} and Corollaries~\ref{cor:STexactR}, \ref{cor:gunnartosixterm} in the stable case and Proposition~\ref{pro:pointed_sixterm_to_takeshi}, Corollaries~\ref{cor:STexactR}, \ref{cor:unit_takeshi}, \ref{cor:unit_gunnar}, and Theorem \ref{thm:range_unit} in the unital case, we obtain the following characterization of purely infinite graph \csa{}s, and of purely infinite Cuntz--Krie\-ger algebras. In the first list, we use that the stabilization of a graph \csa{} is again a graph \csa{} by \cite{gamt:isomorita}*{Proposition~9.8(3)}.

\begin{corollary} \label{cor:main}
Let $X$ be an accordion space. The different versions of filtered $\K$\nb-the\-ory introduced in this article induce bijections between the sets of isomorphism classes of objects in the following three lists, respectively.

 \textbf{\textup{List 1:}}
\begin{itemize}
\item
tight, stable, purely infinite graph \csa{}s over~$X$,
\item
stable Kirchberg $X$\nb-al\-ge\-bras $A$ of real rank zero with all simple subquotients in the bootstrap class satisfying that $\K_1\bigl(A(\{x\})\bigr)$ is free for all $x\in X$,
\item
countable, exact, \rrzero{} $\NT$-modules $M$ with $M(\{x\},1)$ free for all $x\in X$,
\item
countable, exact $\Cattakeshi$-modules $M$ with $M(\gsi x)$ free for all $x\in X$,
\item
countable, exact $\Catgunnar$-modules $M$ with $M(\tcsi x)$ free for all $x\in X$.
\end{itemize}

 \textbf{\textup{List 2:}}
\begin{itemize}
\item
tight, unital, purely infinite graph \csa{}s over~$X$,
\item
unital Kirchberg $X$\nb-al\-ge\-bras $A$ of real rank zero, with all simple subquotients in the bootstrap class such that, for all $x\in X$, the group $\K_1\big(A(\{x\})\bigr)$ is free and $$\rank\K_1\bigl(A(\{x\})\bigr)\leq\rank\K_0\bigl(A(\{x\})\bigr)<\infty,$$
 \item
countable, exact, \rrzero{} pointed $\NT$-modules $M$ such that, for all $x\in X$, the group $M(\{x\},1)$ is free and $$\rank\bigl(M(\si x,1)\bigr)\leq\rank\bigl(M(\si x,0)\bigr)<\infty,$$
 \item
countable, exact pointed $\Cattakeshi$-modules $M$ such that, for all $x\in X$, the group $M(\gsi x)$ is free and $$\rank\bigl(M(\gsi x)\bigr)\leq\rank\Bigl(\coker\bigl(\bigoplus_{y\to x} M(\gosi y) \to M(\gosi x)\bigr)\Bigr)<\infty,$$
\item
isomorphism classes of countable, exact pointed $\Catgunnar$-modules $M$ such that, for all $x\in X$, the group $M(\tcsi x)$ is free and $$\rank\bigl(M(\gsi x)\bigr)\leq\rank\Bigl(\coker\bigl(M(\gobd x)\to M(\gosi x)\bigr)\Bigr)<\infty.$$
\end{itemize}

 \textbf{\textup{List 3:}}
 \begin{itemize}
 \item
tight, purely infinite Cuntz--Krie\-ger algebras over~$X$,
\item
unital Kirchberg $X$\nb-al\-ge\-bras $A$ of real rank zero, with all simple subquotients in the bootstrap class such that, for all $x\in X$, the group $\K_1\big(A(\{x\})\bigr)$ is free and $$\rank\K_1\bigl(A(\{x\})\bigr)=\rank\K_0\bigl(A(\{x\})\bigr)<\infty,$$
 \item
countable, exact, \rrzero{} pointed $\NT$-modules $M$ such that, for all $x\in X$, the group $M(\{x\},1)$ is free and $$\rank\bigl(M(\si x,1)\bigr)=\rank\bigl(M(\si x,0)\bigr)<\infty,$$
 \item
countable, exact pointed $\Cattakeshi$-modules $M$ such that, for all $x\in X$, the group $M(\gsi x)$ is free and $$\rank\bigl(M(\gsi x)\bigr)=\rank\Bigl(\coker\bigl(\bigoplus_{y\to x} M(\gosi y) \to M(\gosi x)\bigr)\Bigr)<\infty,$$
\item
countable, exact pointed $\Catgunnar$-modules $M$ such that, for all $x\in X$, the group $M(\tcsi x)$ is free and $$\rank\bigl(M(\gsi x)\bigr)=\rank\Bigl(\coker\bigl(M(\gobd x)\to M(\gosi x)\bigr)\Bigr)<\infty.$$
 \end{itemize}
 \end{corollary}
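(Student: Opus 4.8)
The corollary is essentially a compilation of the results obtained above, together with the range Theorems~\ref{thm:range} and~\ref{thm:range_unit}; the plan is to exhibit, inside each of the three lists, a chain of bijections between consecutive entries, each induced by one of the functors $\FK=\FKsixterm$, $\Ftakeshi$ or $\Fgunnar$. (Recall $\NT=\Catsixterm$ for accordion spaces by Theorem~\ref{bentmann}, so $\FK$ and $\FKsixterm$ coincide.) The three ``module'' entries in each list will be linked purely algebraically; the two $C^*$-algebraic entries will be linked by combining the classification Theorem~\ref{bentmann} --- and, in the unital case, the meta-theorem \cite{err_meta}*{Theorem~3.3} --- with the range theorems.

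For the module entries I would first invoke Theorem~\ref{thm:sixterm_to_takeshi}, respectively its pointed version Proposition~\ref{pro:pointed_sixterm_to_takeshi}: $\Ftakeshi$ restricts to an \emph{equivalence} between exact \rrzero{} (pointed) $\Catsixterm$-modules and exact (pointed) $\Cattakeshi$-modules. One then checks it respects the constraints in the lists, the main ingredients being: the natural isomorphism $\coker\bigl(\bigoplus_{y\to x}M(\gosi y)\to M(\gosi x)\bigr)\cong M(\si x,0)$ for exact \rrzero{} modules (from the decomposition $\obd{x}=\bigsqcup_{y\to x}\osi y$ of Lemma~\ref{upp} together with the six-term sequence of the boundary pair $(\obd{x},\si x)$ in $\osi x$ and the vanishing of the relevant index map), which carries over the finite-generation and rank conditions; the equivalence, for exact \rrzero{} modules, between freeness of $M(\si x,1)$ for all $x$ and freeness of $M(\csi x,1)$ for all $x$, proved by induction on the partial order of $X$ using the exact sequences relating $M(\si x,1)$, $M(\csi x,1)$, $M(\overline{\partial}\{x\},1)$ and that subgroups of free abelian groups are free; and Lemma~\ref{lem:seq_unit}, which identifies the base-point group $M(X,0)$ with the cokernel appearing in the definition of a pointed $\Cattakeshi$-module. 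Next I would link $\Catsixterm$-modules with $\Catgunnar$-modules via $\Fgunnar$: by Corollary~\ref{cor:STexactR} it sends exact \rrzero{} $\Catsixterm$-modules to exact $\Catgunnar$-modules; it visibly preserves the side conditions, since it forgets only $\K_1$-groups at non-closed points while $M(\gsi x)=M(\si x,1)$ and $\coker\bigl(M(\gobd x)\to M(\gosi x)\bigr)\cong M(\si x,0)$ by the same computation; by Corollary~\ref{cor:gunnartosixterm} (and its pointed analogue) two such modules with $M(\si x,1)$ free and isomorphic $\Fgunnar$-images are isomorphic, so $\Fgunnar$ is injective on the relevant isomorphism classes; and surjectivity onto the exact $\Catgunnar$-modules satisfying the stated freeness and finiteness/rank hypotheses is exactly Theorems~\ref{thm:range} and~\ref{thm:range_unit}, whose graph $C^*$-algebra $C^*(E)$ has $\FKsixterm\bigl(C^*(E)\bigr)$ exact, \rrzero{} by Remark~\ref{rem:K0lift}, and with free $\K_1$ of simple subquotients by Proposition~\ref{prop:range}.

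For the $C^*$-algebraic entries I would argue in two steps. First, $\FK$ (in its unital variant in Lists~2 and~3) identifies the second entry of each list with the third: it is essentially surjective onto exact $\NT$-modules by \cite{bentmann_koehler}*{Lemma~5.6}, strongly complete on stable Kirchberg $X$-algebras with simple subquotients in the bootstrap class by Theorem~\ref{bentmann} (in the unital case one uses \cite{err_meta}*{Theorem~3.3}, as in Corollaries~\ref{cor:unit_takeshi} and~\ref{cor:unit_gunnar}); real rank zero corresponds to the \rrzero{} property of $\FK(A)$ by Remark~\ref{rem:K0lift}; and the freeness, finite-generation and rank conditions are the evident translations since $\FK^j_{\si x}(A)=\K_j\bigl(A(\si x)\bigr)$. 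Second, the first entry coincides, up to isomorphism, with the second: a tight, stable (resp.\ unital) purely infinite graph $C^*$-algebra over $X$ is nuclear, separable, $\mathcal O_\infty$-absorbing and of real rank zero, has all simple subquotients in the bootstrap class, all subquotients with free $\K_1$, and --- in the unital case --- satisfies the finite-generation and rank (in)equalities, with equality precisely for Cuntz--Krie\-ger algebras (Proposition~\ref{prop:range}); conversely, given a stable (resp.\ unital) Kirchberg $X$-algebra $A$ as in the second entry, Theorem~\ref{thm:range} (resp.\ Theorem~\ref{thm:range_unit}) produces a tight, purely infinite graph $C^*$-algebra $C^*(E)$ over $X$ --- a Cuntz--Krie\-ger algebra when the regular-vertex option is used, as for List~3 --- with $\FKgunnar\bigl(C^*(E)\bigr)\cong\FKgunnar(A)$; Corollary~\ref{cor:gunnartosixterm_cstar} (resp.\ Corollary~\ref{cor:unit_gunnar}) lifts this to $\FK(A)\cong\FK\bigl(C^*(E)\bigr)$; and Theorem~\ref{bentmann} (resp.\ the unital meta-theorem) produces an $X$-equivariant \Star isomorphism $A\cong C^*(E)$, using that the stabilization of a graph $C^*$-algebra is again one \cite{gamt:isomorita}*{Proposition~9.8(3)}. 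Tracing through the constructions, the bijection between the first two entries is the one induced by $\FK$; concatenating all the identifications along each list completes the proof.

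The main obstacle is not conceptual but a matter of careful bookkeeping in the middle step: verifying that \emph{each} condition occurring in the lists --- freeness of $M(\si x,1)$, finite generation of $M(\si x,0)$, the two rank (in)equalities, and the base point --- is both preserved and reflected by every functor in the chain $\FK$, $\Ftakeshi$, $\Fgunnar$, and that the lifting results, which are not unique, combine with the range theorems to yield honest bijections of isomorphism-class sets rather than mere essential surjectivity. Once this is in place the corollary follows formally.
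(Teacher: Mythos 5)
Your proposal is correct and follows exactly the route the paper itself takes: the paper's ``proof'' consists only of the instruction to combine Theorems~\ref{bentmann}, \ref{thm:sixterm_to_takeshi}, \ref{thm:range}, \ref{thm:range_unit}, Proposition~\ref{pro:pointed_sixterm_to_takeshi} and Corollaries~\ref{cor:STexactR}, \ref{cor:gunnartosixterm}, \ref{cor:unit_takeshi}, \ref{cor:unit_gunnar} (plus \cite{gamt:isomorita}*{Proposition~9.8(3)} for stabilization), and you combine precisely these ingredients. Your extra bookkeeping---transporting the freeness, finite-generation and rank conditions along $\Ftakeshi$ and $\Fgunnar$ via the identifications $\coker\bigl(M(\gobd x)\to M(\gosi x)\bigr)\cong M(\si x,0)$ and the inductive comparison of freeness of $M(\si x,1)$ and $M(\csi x,1)$---is exactly the verification the paper leaves implicit.
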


\begin{bibsection}

  \begin{biblist}
  
  \bib{gamt:isomorita}{article}{
  author = {Abrams, Gene},
  author = {Tomforde, Mark},
  title = {Isomorphism and {M}orita equivalence of graph algebras},
  journal = {Trans. Amer. Math. Soc.}, 
  number = {363},
  year = {2011},
  pages = {3733--3767},
  }

\bib{arklint}{article}{ 
   author = {{Arklint}, Sara E.},
    title = {Do phantom {C}untz-{K}rieger algebras exist?},
    eprint = {arXiv:1210.6515},
     year = {2012},
}

\bib{range_result}{article}{
  author={Arklint, Sara},
  author={Bentmann, Rasmus},
  author={Katsura, Takeshi},
  title={The $\K$-theoretical range of Cuntz--Krie\-ger algebras},
   eprint = {arXiv:1309.7162v1},
   year={2013},
}

\bib{arr}{article}{
  author={Arklint, Sara},
  author={Restorff, Gunnar},
  author={Ruiz, Efren},
  title={Filtrated {$\K$}-theory of real rank zero {$C^*$}-algebras},
  JOURNAL = {Internat. J. Math.},
    VOLUME = {23},
      YEAR = {2012},
    NUMBER = {8},
     PAGES = {1250078, 19},
      ISSN = {0129-167X},
       DOI = {10.1142/S0129167X12500784},
       URL = {http://dx.doi.org/10.1142/S0129167X12500784},
}

\bib{arklint_ruiz}{article}{
   author = {{Arklint}, Sara E.},
   author={{Ruiz}, Efren},
    title = {Corners of Cuntz--Krie\-ger algebras},
   eprint = {arXiv:1209.4336},
     year = {2012},
}

\bib{bentmann}{article}{
      author={Bentmann, Rasmus},
       title={Filtrated {$\K$}-theory and classification of {$C^*$}-algebras},
        date={University of {G}\"ottingen, 2010},
        note={Diplom thesis},
        eprint = {www.math.ku.dk/~bentmann/thesis.pdf},
}

\bib{Bentmann:Intermediate_cancellation}{article}{
  author={Bentmann, Rasmus},
  title={Kirchberg $X$\nb-al\-ge\-bras with real rank zero and intermediate cancellation},
  eprint = {arXiv:math/1301.6652},
  year = {2013},
}

\bib{bentmann_koehler}{article}{
  author={Bentmann, Rasmus},
  author={K\"ohler, Manuel},
  title={Universal Coefficient Theorems for {$C^*$}-algebras over finite topological spaces},
  eprint = {arXiv:math/1101.5702v3},
  year = {2011},
}

\bib{boyle_huang}{article}{
    AUTHOR = {Boyle, Mike},
    author ={Huang, Danrun},
     TITLE = {Poset block equivalence of integral matrices},
   JOURNAL = {Trans. Amer. Math. Soc.},
    VOLUME = {355},
      YEAR = {2003},
    NUMBER = {10},
     PAGES = {3861--3886 (electronic)},
      ISSN = {0002-9947},
       DOI = {10.1090/S0002-9947-03-02947-7},
       URL = {http://dx.doi.org/10.1090/S0002-9947-03-02947-7},
}

\bib{brownpedersen}{article}{
  author={Brown, Lawrence G.},
      AUTHOR = {Pedersen, Gert K.},
     TITLE = {{$C^*$}-algebras of real rank zero},
   JOURNAL = {J. Funct. Anal.},
    VOLUME = {99},
      YEAR = {1991},
    NUMBER = {1},
     PAGES = {131--149},
      ISSN = {0022-1236},
       DOI = {10.1016/0022-1236(91)90056-B},
       URL = {http://dx.doi.org/10.1016/0022-1236(91)90056-B},
}

\bib{Cuntz:O_n}{article}{
   author={Cuntz, Joachim},
   title={Simple $C\sp*$-algebras generated by isometries},
   journal={Comm. Math. Phys.},
   volume={57},
   date={1977},
   number={2},
   pages={173--185},
   issn={0010-3616},
}

\bib{cuntz_krieger}{article}{
    AUTHOR = {Cuntz, Joachim},
    author ={Krieger, Wolfgang},
     TITLE = {A class of {$C^{\ast} $}-algebras and topological {M}arkov
              chains},
   JOURNAL = {Invent. Math.},
    VOLUME = {56},
      YEAR = {1980},
    NUMBER = {3},
     PAGES = {251--268},
      ISSN = {0020-9910},
       DOI = {10.1007/BF01390048},
       URL = {http://dx.doi.org/10.1007/BF01390048},
}

\bib{err}{article}{
    AUTHOR = {Eilers, S{\o}ren},
    author = {Restorff, Gunnar},
    author = {Ruiz, Efren},
     TITLE = {Classifying {$C^*$}-algebras with both finite and infinite subquotients},
      eprint={arXiv:1009.4778},
      year = {2010},
}

\bib{err_meta}{article}{
    AUTHOR = {Eilers, S{\o}ren},
    author = {Restorff, Gunnar},
    author = {Ruiz, Efren},
     TITLE = {Strong classification of extensions of classifiable {$C^*$}-algebras}, 
    eprint={arXiv:1301.7695v1},
      year = {2013},
}

\bib{hongszymanski}{article}{
    AUTHOR = {Hong, Jeong Hee},
    author = {Szyma{\'n}ski, Wojciech},
     TITLE = {Purely infinite {C}untz-{K}rieger algebras of directed graphs},
   JOURNAL = {Bull. London Math. Soc.},
    VOLUME = {35},
      YEAR = {2003},
    NUMBER = {5},
     PAGES = {689--696},
      ISSN = {0024-6093},
       DOI = {10.1112/S0024609303002364},
       URL = {http://dx.doi.org/10.1112/S0024609303002364},
}

\bib{Kirchberg:Fields_Monograph}{article}{
  author={Kirchberg, Eberhard},
  title={The classification of Purely Infinite $C^*$-algebras using Kasparov's Theorey},
  note={to appear in the Fields Institute Communication series},
}

\bib{kirchberg}{incollection}{
    AUTHOR = {Kirchberg, Eberhard},
     TITLE = {Das nicht-kommutative {M}ichael-{A}uswahlprinzip und die
              {K}lassifikation nicht-einfacher {A}lgebren},
 BOOKTITLE = {{$C^*$}-algebras ({M}\"unster, 1999)},
     PAGES = {92--141},
 PUBLISHER = {Springer},
   ADDRESS = {Berlin},
      YEAR = {2000},
}

\bib{linrordam}{article}{
    AUTHOR = {Lin, Hua Xin},
    author = {R{\o}rdam, Mikael},
     TITLE = {Extensions of inductive limits of circle algebras},
   JOURNAL = {J. London Math. Soc. (2)},
    VOLUME = {51},
      YEAR = {1995},
    NUMBER = {3},
     PAGES = {603--613},
      ISSN = {0024-6107},
       DOI = {10.1112/jlms/51.3.603},
       URL = {http://dx.doi.org/10.1112/jlms/51.3.603},
}

\bib{MN:Filtrated}{article}{
  author={Meyer, Ralf},
  author={Nest, Ryszard},
  title={$C^*$-algebras over topological spaces: filtrated $\K$\nb-the\-ory},
JOURNAL = {Canad. J. Math.},
    VOLUME = {64},
      YEAR = {2012},
    NUMBER = {2},
     PAGES = {368--408},
      ISSN = {0008-414X},
       DOI = {10.4153/CJM-2011-061-x},
       URL = {http://dx.doi.org/10.4153/CJM-2011-061-x},
}

\bib{MN:Bootstrap}{article}{
  author={Meyer, Ralf},
  author={Nest, Ryszard},
  title={$C^*$-algebras over topological spaces: the bootstrap class},
  journal={M\"unster J. Math.},
  volume={2},
  date={2009},
  pages={215--252},
  issn={1867-5778},
}

\bib{pasnicurordam}{article}{
    AUTHOR = {Pasnicu, Cornel},
    author = {R{\o}rdam, Mikael},
     TITLE = {Purely infinite {$C^*$}-algebras of real rank zero},
   JOURNAL = {J. Reine Angew. Math.},
    VOLUME = {613},
      YEAR = {2007},
     PAGES = {51--73},
      ISSN = {0075-4102},
       DOI = {10.1515/CRELLE.2007.091},
       URL = {http://dx.doi.org/10.1515/CRELLE.2007.091},
}

\bib{Phillips:Classification}{article}{
   author={Phillips, N. Christopher},
   title={A classification theorem for nuclear purely infinite simple $C^*$-algebras},
   journal={Doc. Math.},
   volume={5},
   date={2000},
   pages={49--114 (electronic)},
   issn={1431-0635},
}

\bib{restorff}{article}{
    AUTHOR = {Restorff, Gunnar},
     TITLE = {Classification of {C}untz-{K}rieger algebras up to stable
              isomorphism},
   JOURNAL = {J. Reine Angew. Math.},
    VOLUME = {598},
      YEAR = {2006},
     PAGES = {185--210},
      ISSN = {0075-4102},
       DOI = {10.1515/CRELLE.2006.074},
       URL = {http://dx.doi.org/10.1515/CRELLE.2006.074},
}

\bib{rr}{article}{
AUTHOR = {Gunnar Restorff and Efren Ruiz},
    TITLE = {On {R}\o rdam's classification of certain {$C^*$}-algebras
             with one non-trivial ideal. {II}},
  JOURNAL = {Math. Scand.},
   VOLUME = {101},
     YEAR = {2007},
   NUMBER = {2},
    PAGES = {280--292},
     ISSN = {0025-5521},
}

\bib{Rordam:Class_of_CK_algs}{article}{
   author={R{\o}rdam, Mikael},
   title={Classification of Cuntz-Krieger algebras},
   journal={$K$-Theory},
   volume={9},
   date={1995},
   number={1},
   pages={31--58},
   issn={0920-3036},
   doi={10.1007/BF00965458},
}

\bib{rordam}{article}{
    AUTHOR = {R{\o}rdam, Mikael},
     TITLE = {Classification of extensions of certain {$C^*$}-algebras by
              their six term exact sequences in {$\K$}-theory},
   JOURNAL = {Math. Ann.},
    VOLUME = {308},
      YEAR = {1997},
    NUMBER = {1},
     PAGES = {93--117},
      ISSN = {0025-5831},
       DOI = {10.1007/s002080050067},
       URL = {http://dx.doi.org/10.1007/s002080050067},
}

\bib{stacks-project}{book}{
  author={The Stacks Project Authors},
  title={Stacks Project},
  note={available online at: \href{http://math.columbia.edu/algebraic_geometry/stacks-gi}{http://math.columbia.edu/algebraic\textunderscore geometry/stacks-gi}},
}

\bib{tomswinter}{article}{
    AUTHOR = {Andrew S. Toms and Wilhelm Winter},
     TITLE = {Strongly self-absorbing {$C^*$}-algebras},
   JOURNAL = {Trans. Amer. Math. Soc.},
    VOLUME = {359},
      YEAR = {2007},
    NUMBER = {8},
     PAGES = {3999--4029},
      ISSN = {0002-9947},
       DOI = {10.1090/S0002-9947-07-04173-6},
       URL = {http://dx.doi.org/10.1090/S0002-9947-07-04173-6},
}

  \end{biblist}
\end{bibsection}

\end{document}